%&latex
\documentclass[fullpage, 11pt]{article}
\usepackage{amsfonts}
\usepackage{amssymb}
\usepackage{amsmath}
\usepackage{latexsym}
\usepackage{amsthm}
\usepackage{epsfig}
\usepackage{graphicx}
\usepackage{subfig}
\usepackage{color}
\usepackage{url}
\usepackage{enumerate}
\usepackage{ifpdf}  % For figure commands below

%Figure commands
\newcommand
\scalebox{0.85}{%
\ifpdf
\input{figures/[.pdftex_t}% \input{#1.pdf_t}
\else
\input{figures/[.pstex_t}
\fi}1]{
\scalebox{0.85}{%
\ifpdf
\input{figures/#1.pdftex_t}% \input{#1.pdf_t}
\else
\input{figures/#1.pstex_t}
\fi}}
\newcommand\ifpdf
\input{[.pdf}% \input{#1.pdf_t}
\else
\input{[.eps}
\fi1]{\ifpdf
\input{#1.pdf}% \input{#1.pdf_t}
\else
\input{#1.eps}
\fi}

\graphicspath{{figures/}}

%\textwidth 15.5cm \textheight 21.5cm \topmargin 0cm \evensidemargin
%0in \oddsidemargin 0in

\addtolength{\oddsidemargin}{-30pt}
\addtolength{\evensidemargin}{-30pt}
\addtolength{\textwidth}{60pt}

\addtolength{\topmargin}{-22pt}
\addtolength{\textheight}{32pt}

%Math Operators

\DeclareMathOperator	\argmax		{arg\,max}

\DeclareMathOperator	\conv		{conv}
\DeclareMathOperator	\cone 		{cone}

\DeclareMathOperator	\ext			{ext}
\DeclareMathOperator	\intr			{int}

\DeclareMathOperator	\rec			{rec}

\DeclareMathOperator	\relint		{rel\,int}

\DeclareMathOperator	\verts		{vert}

%Command Shortcuts

\newcommand{\bb}{\mathbb}

%Mathbb
\newcommand{\R}{\bb R}
\newcommand{\Q}{\bb Q}
\newcommand{\Z}{\bb Z}

%Constructed Commands

\newcommand\st{\mid}
\newcommand\bigst{\mathrel{\big|}}
\newcommand\Bigst{\mathrel{\Big|}}

%Theorem Environments
\newtheorem{theorem}{Theorem}[section]
\newtheorem{corollary}[theorem]{Corollary}
\newtheorem{lemma}[theorem]{Lemma}
\newtheorem{proposition}[theorem]{Proposition}
\newtheorem{remark}[theorem]{Remark}
\newtheorem{observation}[theorem]{Observation}

\newtheorem{definition}[theorem]{Definition}

%%%%%%%%%%%%%%%%%%%%%%%%%

%          SPECIFIC COMMANDS FOR THIS PAPER        %

%%%%%%%%%%%%%%%%%%%%%%%%%

\newcommand{\Rf}{R_f}		%Uses \Rf

%Symbol Shortcuts
%Bar letters
\newcommand{\y}{\bar{y}}			%Uses \y
\newcommand{\p}{\bar{p}}			%Uses \p
\renewcommand{\a}{\bar{a}}		%Uses \a

% Mathcal -- for sets of matrices
\newcommand{\T}{\mathcal{T}}		%Uses \T
\renewcommand{\S}{\mathcal{S}}	%Uses \S
% Mathcal -- for the tuple of vectors
\newcommand{\Y}{\mathcal{Y}}   	%Uses \Y

%% Nothing special -- sets of points
\newcommand{\M}{M}    			%Uses \M

\newcommand{\Blocking}[1]{#1^\vee}  % The blocking polyhedron

%%% Override the amsthm proof environment. 
%%% We use a lot of boldface in our case discussion.  Make the "Proof" heading
%%% heavier to match this. --mkoeppe
\makeatletter
\renewenvironment{proof}[1][\proofname]{\par
  \pushQED{\qed}%
  \normalfont \topsep6\p@\@plus6\p@\relax
  \trivlist
  \item[\hskip\labelsep
        \itshape\bfseries
    #1\@addpunct{.}]\ignorespaces
}{%
  \popQED\endtrivlist\@endpefalse
}
\makeatother

\begin{document}

\title{Algorithmic and Complexity Results for Cutting Planes Derived from Maximal Lattice-Free Convex Sets}

\author{
%G\'erard Cornu\'ejols \thanks{Supported by  NSF grant CMMI0653419,
%ONR grant N00014-97-1-0196 and ANR grant BLAN06-1-138894.}   \\
%Tepper School of Business, Carnegie Mellon University, Pittsburgh,
%PA 15213 \\ and LIF, Facult\'e des Sciences de Luminy, Universit\'e
%de Marseille, France
%\\ gc0v@andrew.cmu.edu
%\\ $\;$ \\
%Fran\c{c}ois Margot \thanks{Supported by  ONR grant N00014-97-1-0196.}\\
%Tepper School of Business, Carnegie Mellon University, Pittsburgh,
%PA 15213 \\ fmargot@andrew.cmu.edu \\ $\;$ \\
Amitabh Basu\thanks{Dept. of Mathematics, University of California, Davis, 
\texttt{abasu@math.ucdavis.edu}} \and
Robert Hildebrand\thanks{Dept. of Mathematics, University of California, Davis,
\texttt{rhildebrand@math.ucdavis.edu}} \and
Matthias K\"oppe\thanks{Dept. of Mathematics, University of California, Davis, 
\texttt{mkoeppe@math.ucdavis.edu} }
}

\date{July 25, 2011\thanks{$\relax$Revision: 201 $ - \ $Date: 2011-07-25 12:55:40 -0700 (Mon, 25 Jul 2011) $ $}}

\maketitle

\begin{abstract}
We study a mixed integer linear program
with $m$ integer variables and $k$ non-negative continuous variables
in the form of the relaxation of the corner polyhedron 
that was introduced by Andersen, Louveaux, Weismantel and
Wolsey [\emph{Inequalities from two rows of a simplex tableau}, Proc.\ IPCO
2007, LNCS, vol.~4513, Springer, pp.~1--15]. 
We
describe the facets of this mixed integer linear program via the extreme
points of a well-defined polyhedron. We then utilize this description to give
polynomial time algorithms to derive valid inequalities with optimal $l_p$
norm for arbitrary, but fixed $m$.  For the case of $m=2$, we give a
refinement and a new proof of a characterization of the
facets by Cornu\'ejols and Margot [\emph{On the facets of mixed integer programs
  with two integer variables and two constraints}, Math.\ Programming
\textbf{120} (2009), 429--456].  The key point of our approach is that the conditions are much more explicit and can be tested in a more direct manner, removing the need for a reduction
algorithm. These results allow us to show that the
relaxed corner polyhedron has only polynomially many facets.   
\end{abstract}

\section{Introduction}

The integer programming community has recently focused on developing a unifying theory for cutting planes. This has involved applying tools from convex analysis and the geometry of numbers to combine the ideas behind Gomory's corner polyhedron~\cite{gom} and Balas' intersection cuts~\cite{bal} into one uniform framework. It is fair to say that this recent line of research was started by the seminal paper by Andersen, Louveaux, Weismantel and Wolsey~\cite{alww}, which took a fresh look at the work done by Gomory and Johnson in the 1960's. We refer the reader to~\cite{corner_survey} for a survey of these results. 

It can be argued that the theoretical research has tended to emphasize the structural aspects of these cutting planes and the algorithmic aspects have not been developed as intensively. Our goal in this paper is to derive structural results which, we hope, will be useful from an algorithm design perspective. Hence, our emphasis is on deriving polynomiality results about the structure of these cutting planes. We also provide concrete polynomial time algorithms for generating the ``best'' or ``deepest'' cuts, according to some standard criteria.

To this end, we study the following system, introduced by Andersen et al.~\cite{alww} and Borozan and Cornu\'ejols~\cite{bc}.
\begin{equation}\label{M_fk}
\begin{aligned}
x & =  f + \sum_{j = 1}^k r^js_j, \\
x & \in \mathbb{Z}^m, \quad s_j \geq 0 \quad\textrm{for all }j = 1,\dots,k.
\end{aligned}
\end{equation}

We will assume that the data is rational, i.e., $f \in \Q^m$ and $r^j \in \Q^m$ for all $j \in \{1, \ldots, k\}$. This model appears as a natural relaxation of Gomory's corner polyhedron~\cite{gom}. As mentioned above, this model has received significant attention in recent years for developing the theory behind cutting planes derived from multiple rows of the optimal simplex tableaux.  Note that
to describe the solutions of~\eqref{M_fk}, one only needs to record
the values of the $s_j$ variables. We use $R_f = R_f(r^1, \ldots, r^k)$ to
denote the set of all points $s$ such that \eqref{M_fk} is
satisfied. It is well-known that all valid inequalities for $\conv(\Rf)$, where $\conv$ denotes the convex hull, can be derived using the Minkowski functional of maximal lattice-free convex sets. We state this formally in Theorem~\ref{thm:int_cuts} below. In this paper we give algorithms and theorems about the facet structure of $\conv(\Rf)$, which are expected to be useful for generating strong cutting planes for general mixed integer linear programs.

\paragraph{Motivation and Results.} It is well-known that the integer hull $\conv(\Rf)$ is a polyhedron of the {\em blocking type}. In Section~\ref{sec:alg}, we first describe the so-called {\em blocking polyhedron} for $\conv(\Rf)$. This is the convex set of all valid inequalities for $\conv(\Rf)$. For a detailed account of blocking polyhedra and the ``polar'' set of the valid inequalities for such polyhedra, see Chapter~9 in~\cite{sch}. The main result of Section~\ref{sec:alg} gives an explicit description of the blocking polyhedron of $\conv(\Rf)$ using a polynomial number of inequalities (Theorem~\ref{thm:main}). This implies that all facets of $\conv(\Rf)$ can be obtained by enumerating the extreme points of a polyhedron with a polynomial number of facets in the dual space. This result has the same flavor as Gomory's result for describing all facets of the corner polyhedron implicitly via the extreme points of a well-defined polyhedron (see Theorem~18 in~\cite{gom}).

We next exploit this to provide efficient algorithms for finding the optimal
valid inequality according to certain norms of the coefficient vector. 
More precisely, let $\Vert v \Vert_p = (\sum_{j=1}^k|v_j|^p)^{1/p}$ be the standard $l_p$ norm of a vector $v \in \R^k$. If $\sum_{j=1}^k \gamma_j s_j \geq 1$ is a valid inequality for $\conv(\Rf)$, its $l_p$ norm is $\Vert \gamma \Vert_p$ where $\gamma$ is the vector in $\R^m$ with components $\gamma_j$. We give polynomial time algorithms to determine cuts with minimum $l_p$ norm for arbitrary, but fixed $m$. For the special case of the $l_1$ and $l_\infty$ norms, this reduces to solving a linear program with polynomially many constraints. We also give an alternative approach for the $l_\infty$ norm.

We then investigate the case of $m=2$ in more detail in
Sections~\ref{sec:nec_cond} and \ref{sec:poly_facets}. In particular, we show
that the number of facets of $\conv(\Rf)$ 
is polynomial in the input. This result is proved in
Section~\ref{sec:poly_facets} (Theorem~\ref{THM:POLYFACETS}). In order to
prove this theorem, we first develop some technology in Section~\ref{sec:tilting}
to derive necessary conditions for a valid inequality to be a facet. Our hope
is that these tools can be utilized to prove useful theorems about facets of
$\conv(\Rf)$ for $m \geq 3$, in the same vein as the results
of Cornu\'ejols and Margot appearing in~\cite{cm}. Although we do not derive
such results in this paper, we exhibit the promise of this approach by giving alternative
proofs of necessary conditions for inequalities to be facets which appear
in~\cite{cm} and providing more refined and new necessary conditions. The necessary conditions in~\cite{cm} are stated as a particular termination condition of a complicated algorithm. This makes them hard to be used in a practical setting. In contrast, our refined conditions are explicit and can be tested directly. This makes them much more useful from the practical point of view of actually generating facet defining cutting planes.  Another advantage of our technique over the Cornu\'ejols\kern1pt
--\kern1pt Margot proof is that when the necessary conditions are violated, we
can explicitly express the given valid inequality as a convex combination of
other valid inequalities. This is crucial in obtaining a proof of the fact
that the so-called {\em triangle closure} is a polyhedron~\cite{bhk}. This settles an important open problem in this recent line of research. Finally, and perhaps most importantly, we envision that the ideas behind the
polynomiality results of Section~\ref{sec:poly_facets} can be exploited to
design algorithms and heuristics for deriving effective cutting planes. We
emphasize this by using the constructive nature of the proof for
Theorem~\ref{THM:POLYFACETS} to give a polynomial time algorithm for
enumerating all the facets of $\conv(\Rf)$ for $m=2$
(Theorem~\ref{thm:enumerate}). 

We mention here that some variations of these ideas have been explored by Louveaux and Poirrier~\cite{lp}, and also by Fukasawa et al.~\cite{fukasawa}.

\section{Preliminaries}\label{sec:prelim}

It is well-known that $\conv(\Rf)$ is a full-dimensional
polyhedron of blocking type, i.e., $\conv(\Rf) \subset \R^k_+$ (where $\R^k_+$
denotes the nonnegative orthant) and if $x\in \conv(\Rf)$, then $y\geq x$
implies $y \in \conv(\Rf)$. Hence, all nontrivial valid inequalities for
$\conv(\Rf)$ can be written as $\gamma \cdot s =
\sum_{j=1}^k \gamma_j s_j \geq 1$ for some vector $\gamma \in \R^k_+$
(see~\cite{sch}, Chapter 9 for more details on polyhedra of blocking type).  

A valid inequality $\smash{\sum_{j=1}^k\gamma_j s_j \geq 1}$ for $\conv(\Rf)$ is called {\em minimal} if it is not dominated by another inequality, i.e., there does not exist a {\em different} valid inequality $\smash{\sum_{j=1}^k \gamma'_j s_j \geq 1}$ such that $\gamma'_j \leq \gamma_j$ for $j=1,\dots, k$. 
A valid inequality $\gamma  \cdot s \geq 1$ for $\conv(\Rf)$ is called \emph{extreme} if there do not exist valid inequalities $\gamma^1 \cdot s \geq 1$, $\gamma^2 \cdot s \geq 1$ such that $\gamma = \frac{1}{2} \gamma^1 + \frac{1}{2} \gamma^2$. 
For polyhedra of blocking type, extreme inequalities are always minimal. Moreover, since $\conv(\Rf)$ is full-dimensional, facets and extreme inequalities for $\conv(\Rf)$ are one and the same thing. We now collect the main results from the recent theory of cutting planes using lattice-free sets. 
For more details, please see~\cite{corner_survey}.

%Given a real valued matrix $B \in \R^{3 \times 2}$, we denote $\M(B) = \{x \in \R^2 \st B(x-f) \leq e\}$, where $e$ is the vector of all ones. We will be interested in matrices $B$ such that $\M(B)$ is a lattice-free set. Note that this would include triangles and splits in $\R^2$; the latter case occurs when we allow two rows of $B$ to be the same.

\begin{definition} Let $K\subset \R^m$ be a closed convex set containing the
origin in its interior. The {\em gauge} or the \emph{Minkowski functional} is defined by
$$\psi_K(x)=\inf\{\,t>0\st  t^{-1}x\in K \,\} \quad \mbox{ for all }
x\in \R^m.$$ 
\end{definition}
By definition $\psi_K$ is non-negative.

\begin{theorem}[Intersection cuts \cite{bal}, \cite{corner_survey}]
\label{thm:int_cuts}
Consider any closed convex set $M$ containing the point $f$ in its
interior, but no integer point in its interior. Let $K = M - f$.
Then the inequality $\sum_{j=1}^k\psi_K(r^j)s_j \geq 1$ is valid
for $\conv(\Rf)$. Moreover, every valid inequality of $\conv(\Rf)$ can be derived in this manner.
\end{theorem}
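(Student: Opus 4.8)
The plan is to prove Theorem~\ref{thm:int_cuts} in two parts: first the validity of the inequality $\sum_{j=1}^k \psi_K(r^j)s_j \geq 1$ for every lattice-free convex set $M$, and then the converse, that every valid inequality for $\conv(\Rf)$ arises this way.

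\textbf{Validity.} Let $s \in R_f$, so there is an $x \in \Z^m$ with $x = f + \sum_{j=1}^k r^j s_j$, i.e. $\sum_{j=1}^k r^j s_j = x - f$. First I would dispose of the degenerate case: if $\sum_j r^j s_j = 0$ then $x = f \in \intr(M)$ contradicts that $M$ has no integer point in its interior (unless $f \in \Z^m$, but then $f$ itself is an interior integer point), so $\sum_j r^j s_j \neq 0$. Set $t = \sum_{j=1}^k \psi_K(r^j) s_j$; I want $t \geq 1$. Using positive homogeneity of the gauge ($\psi_K(\lambda v) = \lambda \psi_K(v)$ for $\lambda \geq 0$) and subadditivity ($\psi_K(u+v)\leq \psi_K(u)+\psi_K(v)$), which hold because $K = M - f$ is a closed convex set with $0$ in its interior, we get
\[
t = \sum_{j=1}^k \psi_K(r^j s_j) \geq \psi_K\!\Bigl(\sum_{j=1}^k r^j s_j\Bigr) = \psi_K(x - f).
\]
Now $x - f \in \R^m$, and since $x \notin \intr(M)$ we have $x - f \notin \intr(K)$; because $K$ is closed and contains $0$ in its interior, a point lies in $\intr(K)$ iff its gauge is strictly less than $1$, hence $\psi_K(x-f) \geq 1$. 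Combining, $t \geq 1$, which is the desired inequality. (The only subtlety is the equivalence $\psi_K(v) < 1 \iff v \in \intr(K)$ and $\psi_K(v) \leq 1 \iff v \in K$, which follows from closedness of $K$ and the standard scaling argument; I would state this as a short lemma or cite it from~\cite{corner_survey}.)

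\textbf{Converse.} Given a minimal (equivalently, by blocking-type duality, any) valid inequality $\gamma \cdot s \geq 1$ with $\gamma \in \R^k_+$, I would exhibit a closed convex set $M$ with $f$ in its interior and no integer point in its interior such that $\psi_K(r^j) \leq \gamma_j$ for all $j$, where $K = M - f$; monotonicity of the resulting cut then shows the $\gamma$-inequality is dominated by (or equal to, in the minimal case) the one induced by $M$. The natural candidate is to take $K^\circ$-type construction: define $K = \{\,x \in \R^m : \text{$x$ is a nonnegative combination of the $r^j/\gamma_j$ plus a lineality part}\,\}$, more precisely let $K$ be the closed convex hull of $\{0\}$ together with the rays, intersected appropriately — the cleanest route is to set
\[
K = \bigl\{\, x \in \R^m \st \psi(x) \leq 1 \,\bigr\}, \qquad \text{where } \psi(x) = \inf\Bigl\{\textstyle\sum_j \gamma_j \lambda_j \st x = \sum_j \lambda_j r^j,\ \lambda_j \geq 0\Bigr\},
\]
taking $\psi(x) = +\infty$ when $x$ is not in $\cone(r^1,\dots,r^k)$; then enlarge $K$ to be full-dimensional if necessary so that $0 \in \intr(K)$, for instance by intersecting with a large box, which only decreases gauges. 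One checks $\psi_K(r^j) \leq \gamma_j$ by feasibility of $\lambda = e_j$. The key point is that $M = K + f$ has no integer point in its interior: if $x \in \Z^m \cap \intr(M)$, then $x - f \in \intr(K)$, so $\psi(x-f) < 1$, giving $\lambda \geq 0$ with $x - f = \sum_j r^j \lambda_j$ and $\sum_j \gamma_j \lambda_j < 1$; but $\lambda \in R_f$ by construction, contradicting validity of $\gamma \cdot s \geq 1$. Finally, every such $M$ is contained in a maximal lattice-free convex set, and one may replace $M$ by that maximal set without increasing the gauge, though for the theorem as stated (just "closed convex, $f$ interior, no integer interior point") this last step is not needed.

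\textbf{Main obstacle.} The routine part is the gauge algebra; the part requiring care is the converse construction, specifically ensuring that $M$ can be taken closed with $f$ genuinely in its \emph{interior} while still having $\psi_K(r^j) \leq \gamma_j$ — the cone $\cone(r^1,\dots,r^k)$ need not be full-dimensional, so one must pad $K$ without destroying the lattice-free property, and one must handle rays $r^j$ with $\gamma_j = 0$ (these force $K$ to contain the whole ray, i.e. $M$ is unbounded in that direction, which is fine and consistent with $\psi_K(r^j) = 0$). I would handle the $\gamma_j = 0$ directions by putting them into the lineality/recession space of $K$ from the outset. Since this theorem is quoted from~\cite{bal} and surveyed in~\cite{corner_survey}, in the paper I would present the validity direction in full and give the converse in condensed form with a pointer to the survey.
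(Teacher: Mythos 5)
The paper itself gives no proof of Theorem~\ref{thm:int_cuts}: it is quoted from \cite{bal} and \cite{corner_survey}, and the only place the paper argues with this material is the proof of Proposition~\ref{prop:main}, where the set $M_\gamma$ is used under the Section~\ref{sec:alg} assumption $\cone\{r^1,\dots,r^k\}=\R^m$. Measured against the standard argument, your validity half is correct and complete: positive homogeneity and subadditivity of $\psi_K$, the identity $\intr(K)=\{v \st \psi_K(v)<1\}$, and the exclusion of the degenerate case $x=f$ (which would make $f$ an interior integer point) are exactly what is needed.

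The converse half has the right construction --- your $K=\{\psi\le 1\}$ is, up to adding the origin, the set $M_\gamma-f$ from the proof of Proposition~\ref{prop:main}, and your contradiction argument (an integer point in the interior yields $\lambda\ge 0$ with $\sum_j\gamma_j\lambda_j<1$, i.e.\ a violating point of $\Rf$) is sound --- but the full-dimensionalization step is a genuine gap. The theorem as stated in Section~\ref{sec:prelim} does not assume $\cone\{r^1,\dots,r^k\}=\R^m$, and when this cone is not full-dimensional the origin need not lie in $\intr(K)$ (it can even be a vertex of $K$), so $f\notin\intr(M)$ and the hypotheses of the theorem are not met by your $M$. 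Your proposed fix, ``enlarge $K$ \dots\ by intersecting with a large box, which only decreases gauges,'' is backwards: intersecting shrinks $K$, increases gauges, and does nothing to put $0$ in the interior. What is required is an enlargement (e.g., the convex hull of $K$ with a small ball around the origin, or padding along the orthogonal complement of $\spann\{r^1,\dots,r^k\}$ together with pushing $f$ into the relative interior), and one must then prove that no integer point enters the interior of the enlarged set. This is not automatic: $\gamma$ may be irrational even though $f$ and the $r^j$ are rational, so $K+f$ is in general an irrational, possibly unbounded polyhedron, and a uniform $\epsilon$-enlargement of such a set can swallow lattice points; a correct argument must exploit the rationality of $\spann\{r^j\}$ (so that the projection of $\Z^m$ onto it is a discrete lattice), or pass to minimal inequalities and the known characterization via maximal lattice-free sets. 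Finally, as you partly note, ``derived in this manner'' can only hold in the dominated sense (any gauge forces coefficients of parallel rays to scale linearly, which a non-minimal valid $\gamma$ need not satisfy), so the converse should be stated as: every valid inequality is equal to, or dominated by, an inequality $\sum_{j}\psi_K(r^j)s_j\ge 1$ of this form. Under the Section~\ref{sec:alg} assumption that the rays span $\R^m$ conically, no padding is needed and your construction coincides with the paper's.
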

%A valid inequality $\sum_{i=1}^k\psi(r^i)s_i \geq 1$ for $\conv(R_f(r^1, \ldots, r^k))$ is minimal if it is not dominated by another inequality, that is, there does not exist a different valid inequality $\sum_{i=1}^k \phi(r^i) s_i \geq 1$ such that $\phi(r^i) \leq \psi(r^i)$ for $i=1,\dots, k$.
%A valid inequality $\sum_{i=1}^k\psi(r^i)s_i \geq 1$ for $\conv(R_f(r^1, \ldots, r^k))$ is extreme, or defines a facet of %$\conv(R_f(r^1, \ldots, r^k)),$ if there do not exist two distinct valid inequalities $\sum_{i=1}^k\psi_j(r^i)s_i \geq 1, j = 1, 2$, 
%such that $\psi(r^i) = \alpha \psi_1(r^i) + (1-\alpha) \psi_2(r^i)$ for some $\alpha \in (0,1)$ and for all $i=1,\dots, k$. 
For convenience, we also say that the function $\psi_K$ is extreme when the corresponding inequality $\sum_{j=1}^k \psi_K(r^j)s_j \geq 1$ is extreme. We will refrain from using the terminology that $\psi_K$ defines a facet of $\conv(\Rf)$ as to not confuse these facets with facets of lattice-free polytopes. %For the rest of the paper, we will write $\psi_M$, when we really mean $\psi_{M - f}$.
We will work with a fixed set of rays $\{r^1, \ldots, r^k\}\subset \R^m$. The interior of any set $M \subseteq \R^m$ will be denoted by $\intr(M)$.

It is also well-known (see~\cite{corner_survey}) that all minimal inequalities (and hence all extreme inequalities) can be derived using {\em maximal lattice-free convex sets}, i.e., convex sets containing no integer point in their interior that are maximal with respect to set inclusion. Moreover, it is known~\cite{bccz, lovasz} that maximal lattice-free convex sets are polyhedra whose recession cones are not full-dimensional. Since we will be concerned with maximal lattice-free convex sets with $f$ in their interior, one can represent such sets in the following canonical manner. 

Let $B \in \R^{n\times m}$ be a matrix with $n$ rows $b^1,\dots,b^n\in\R^m$.
We write $B = (b^1; \dots; b^n)$. 
Let 
\begin{equation}\label{eq:M(B)}
\M(B) = \{\,x \in \R^m \st b^i\cdot (x-f) \leq 1\text{ for $i=1, \dots, n$}\,\}.
\end{equation}
This is a polyhedron with $f$ in its interior. We will denote its vertices by $\verts(B)$. In fact, any polyhedron with $f$ in its interior can be given such a description. We will mostly deal with matrices $B$ such that $\M(B)$ is a maximal lattice-free convex set in $\R^m$. 

This description enables one to describe the Minkowski functional by a simple piecewise-linear formula:

\begin{theorem}[see \cite{basu}, Theorem 24]
\label{thm:formula}
Let $B \in \R^{n\times m}$ such that the recession cone of $M(B)$ is not
full-dimensional (i.e., $b^i\cdot r \leq 0$ has no solution satisfying all constraints at strict inequality). Then,
\begin{equation}\label{eq:formula}\psi_{M(B)-f} (r) = \max_{i\in \{1, \ldots, n\}} b^i \cdot r.\end{equation}
\end{theorem}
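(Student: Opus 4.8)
The plan is to read off $\psi_{M(B)-f}$ directly from the definition of the Minkowski functional, using the explicit inequality description of $K := M(B) - f = \{\,x \in \R^m \st b^i \cdot x \leq 1 \text{ for } i = 1,\dots,n\,\}$. Note first that $K$ does contain the origin in its interior, since $b^i\cdot(f-f) = 0 < 1$ for every $i$, so $\psi_K$ is well defined and finite.

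Fix a ray $r \in \R^m$ and write $\mu(r) := \max_{i\in\{1,\dots,n\}} b^i\cdot r$. The first step is to identify the set over which the infimum in the definition of $\psi_K$ is taken: for $t > 0$ one has $t^{-1}r \in K$ if and only if $b^i \cdot (t^{-1}r) \leq 1$ for all $i$, i.e.\ if and only if $t \geq b^i\cdot r$ for all $i$, i.e.\ if and only if $t \geq \mu(r)$. Hence $\psi_K(r) = \inf\{\,t > 0 \st t \geq \mu(r)\,\}$.

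The second step is a case split on the sign of $\mu(r)$. If $\mu(r) > 0$, the feasible set is exactly $[\mu(r),\infty)$ and the infimum equals $\mu(r)$, which is the claimed value. If $\mu(r) \leq 0$, then every $t>0$ is feasible and so $\psi_K(r) = 0$; to match the formula it then suffices to show $\mu(r) = 0$, i.e.\ to rule out $\mu(r) < 0$. But $\mu(r) < 0$ means $b^i \cdot r < 0$ for all $i$; since this is an open condition, a whole neighbourhood of $r$ satisfies it, so $r$ lies in the interior of $\rec(M(B)) = \{\,v \st b^i\cdot v \leq 0 \text{ for all } i\,\}$. This contradicts the hypothesis that $\rec(M(B))$ is not full-dimensional (equivalently, that the system $b^i\cdot v \leq 0$ has no solution with all inequalities strict). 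Therefore $\mu(r) \geq 0$ for every $r$, the case $\mu(r) < 0$ never occurs, and in the remaining situation $\psi_K(r) = 0 = \mu(r)$.

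Assembling the cases yields $\psi_{M(B)-f}(r) = \max_i b^i\cdot r$ for all $r$. There is no real obstacle in this argument; the only place the hypothesis is used — and hence the step worth spelling out — is the elimination of $\mu(r) < 0$, and this is in fact the precise content of the assumption, since a ray $r$ with $\mu(r) < 0$ would give $\psi_K(r) = 0 \neq \mu(r)$ and break the formula. I would also remark in passing that the non-full-dimensionality of $\rec(M(B))$ is automatic when $M(B)$ is lattice-free (by the results of~\cite{bccz, lovasz} quoted above), so the formula applies in all the cases of interest in this paper.
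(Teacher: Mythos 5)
Your argument is correct. Note that the paper does not prove this statement at all---it is quoted from \cite{basu}, Theorem~24---so there is no in-paper proof to compare against; your direct computation from the definition of the gauge (rewriting $t^{-1}r\in K$ as $t\geq \max_i b^i\cdot r$ and using the hypothesis precisely to exclude $\max_i b^i\cdot r<0$, which is exactly the case in which the formula would fail) is the standard and complete way to verify it.
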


Therefore, all minimal inequalities for $\conv(\Rf)$ can be derived using~\eqref{eq:formula} from matrices $B$ such that $M(B)$ is a maximal lattice-free convex set in $\R^m$. For convenience of notation, for any matrix $B\in \R^{n\times m}$ we define $\psi_B(r) = \psi_{M(B)-f} (r) = \max_{i\in \{1, \ldots, n\}} b^i \cdot r$.

For the case of $m=2$, Lov\'asz characterized the maximal lattice-free convex sets in $\R^2$ as follows.
\begin{theorem}[Lov\'asz \cite{lovasz}]
\label{mlfcb2}
In the plane, a maximal lattice-free convex set with non-empty interior is one of the following:
\begin{enumerate}
\item A split $c \leq a x_1 + b x_2 \leq c+1$ where $a$ and $b$ are co-prime integers and $c$ is an integer;
\item A triangle with an integral point in the interior of each of its edges;
\item A quadrilateral containing exactly four integral points, with exactly one of them in the interior of each of its edges.  Moreover, these four integral points are vertices of a parallelogram of area 1.
\end{enumerate}

\end{theorem}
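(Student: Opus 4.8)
The plan is to lean on the already-quoted structure theory: a maximal lattice-free convex set $M\subset\R^2$ is a polyhedron whose recession cone is not full-dimensional, hence is either $\{0\}$, a ray, or a line. The first step is to show a nonzero recession cone must be a \emph{rational line}. If $d$ is a recession direction then $\intr(M)+\R_+d\subseteq\intr(M)$; so if $d$ is rational, any integer point in $\intr(M+\R d)$ can be translated by an integer multiple of a primitive integer vector along $d$ back into $\intr(M)$, showing $M+\R d$ is lattice-free, and maximality forces $M=M+\R d$, i.e.\ $d$ lies in the lineality space. If $d$ were irrational, $\intr(M)$ would project to a bounded open interval in the orthogonal direction densely met by the image of $\Z^2$, forcing $\intr(M)=\emptyset$; but maximal lattice-free sets are full-dimensional. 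Writing the primitive lattice direction as $(b,-a)$ with $\gcd(a,b)=1$, the functional $ax_1+bx_2$ is constant along the lineality and attains every integer on $\Z^2$, so lattice-freeness confines $\intr(M)$ to $\{c<ax_1+bx_2<c+1\}$ and maximality gives $M=\{c\le ax_1+bx_2\le c+1\}$ for some $c\in\Z$: this is case~1.

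It remains to treat the bounded case, $M$ a polygon with $t\ge 3$ edges $e_1,\dots,e_t$. The technical heart is the claim that every edge $e_i$ contains a lattice point in its relative interior. To prove it, relax the $i$-th (irredundant) facet inequality of $M$ by $\epsilon>0$ to obtain $M^{(i)}_\epsilon\supsetneq M$; this polygon is bounded (its recession cone equals that of $M$) and, by maximality, not lattice-free for any $\epsilon>0$, and all of the $M^{(i)}_\epsilon$ with $\epsilon\in(0,1]$ sit inside the fixed bounded set $M^{(i)}_1$, so a finiteness/pigeonhole argument yields one integer point $z_i\in\intr(M^{(i)}_\epsilon)$ for all small $\epsilon$; reading off the (strict) inequalities then shows $a^i\cdot z_i=\beta_i$ while $a^j\cdot z_i<\beta_j$ for $j\ne i$, i.e.\ $z_i\in\relint(e_i)\cap\Z^2$. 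Next, for $i\ne j$ the points $p_i\in\relint(e_i)$ and $p_j\in\relint(e_j)$ cannot be congruent mod $2$: otherwise their midpoint is an integer point, and since $e_i$ and $e_j$ are the only proper faces of $M$ containing $p_i$, resp.\ $p_j$, the open segment $(p_i,p_j)$ lies in $\intr(M)$, contradicting lattice-freeness. Thus the $p_i$ occupy distinct residue classes mod $2$, of which there are four, so $t\le 4$.

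For $t=3$ this already gives case~2: a triangle with an integer point in the relative interior of each edge. For $t=4$ set $Q=\conv\{p_1,p_2,p_3,p_4\}$ with the cyclic order inherited from the edges. A short argument (at each $p_i$ the set $M$ only supplies a half-disk, so $Q$ cannot have $p_i$ in its two-dimensional interior) shows $Q$ is a genuine quadrilateral, and no segment $[p_i,p_{i+1}]$ carries a further lattice point (such a point would lie in $\intr(M)$, or be collinear with $p_i,p_{i+1}$ on $\partial M$ and hence on a common edge of $M$, impossible). So $Q$ has exactly four boundary lattice points and none in its interior, and Pick's formula gives $\vol(Q)=4/2-1=1$; cutting $Q$ along both diagonals into lattice triangles, each of area $\ge 1/2$ and summing to $1$, forces each to have area exactly $1/2$, which makes both pairs of opposite sides of $Q$ parallel, so $Q$ is a parallelogram of area $1$. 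Finally, to see $M$ contains exactly these four integer points: any two congruent integer points of $M$ have their midpoint in $M\setminus\intr(M)$, hence (by convexity) on a common edge of $M$; the lattice points on any single edge form a primitive-direction progression and therefore realize two distinct residue classes as soon as there are two of them; since each of the four edges must host the class of its relative-interior lattice point and each class is confined to a single edge, a counting argument forces every edge to contain exactly one lattice point, so $|M\cap\Z^2|=4$. This is case~3.

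The main obstacle is the relative-interior lattice point lemma for the facets of $M$; the rest is quoted structure theory, elementary convex geometry, and Pick's theorem. The secondary delicate points are the recession-cone reduction (correctly excluding a recession ray that is not a full line) and the bookkeeping in the quadrilateral case — convex position of the $p_i$, the absence of extra boundary lattice points of $Q$, and the residue-class counting that pins down ``exactly four.''
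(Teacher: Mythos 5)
The paper itself gives no proof of this statement---it is quoted from Lov\'asz---so there is nothing internal to compare against; your proposal must stand on its own. Its architecture is the standard one and is sound: reduce the unbounded case to a split via the recession cone (legitimately leaning on the fact, quoted earlier in the paper, that maximal lattice-free sets are polyhedra with non-full-dimensional recession cone), obtain a lattice point in the relative interior of every facet by relaxing that facet and combining maximality with a boundedness/pigeonhole step, cap the number of facets at four by the midpoint (mod $2$) argument, and finish the quadrilateral case with Pick's theorem and the equal-area splitting by each diagonal. Those steps are correct as you present them.

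Two places, however, do not yet establish what is claimed. First, the exclusion of an irrational recession direction: density of the projection of $\Z^2$ onto $d^{\perp}$ meeting the projection of $\intr(M)$ does not by itself put an integer point \emph{inside} $\intr(M)$, and the auxiliary claim that this projection is a bounded interval fails when the recession cone is a ray rather than a line. The argument that works is that $\intr(M)$ contains an $\epsilon$-neighborhood of an entire ray $x_0+\R_+d$ (convexity plus recession), and since the line $t\mapsto td \bmod \Z^2$ is dense in the torus for irrational $d$, some $x_0+td$ with $t\geq 0$ lies within $\epsilon$ of a lattice point, which then lies in $\intr(M)$---contradiction. Second, and more substantively, the final ``exactly four integral points'' count: from ``each residue class is confined to one edge'' and ``two lattice points on an edge realize two classes'' you cannot yet conclude that each edge carries exactly one lattice point, because the second-class point could sit at a \emph{vertex} of $M$, which lies on two edges and violates nothing you have proved so far. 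You must separately exclude integral vertices and second points on an edge: for instance, if an edge $e_i$ carries two lattice points of the same class (e.g.\ a lattice vertex $w$ with $c(w)=c(p_i)$ together with $p_i$), they differ by an even multiple of the primitive direction of $e_i$, so there is an intermediate lattice point in $\relint(e_i)$ of the \emph{other} class, and confinement then forces this point onto a different edge, which is impossible for a point of $\relint(e_i)$; a short case check of this kind closes the count. With these two repairs the proof is complete; the phrase ``cutting $Q$ along both diagonals into lattice triangles'' should read ``along each diagonal separately,'' since the four pieces cut by both diagonals are not lattice triangles.
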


Following Dey and Wolsey \cite{dw}, the maximal lattice-free triangles can be further partitioned into
three canonical types (see Figure 1):
\begin{itemize}
\item \emph{Type 1 triangles}: triangles with integral vertices and exactly one integral point in the
relative interior of each edge;
\item \emph{Type 2 triangles}: triangles with at least one fractional vertex $v$, exactly one integral
point in the relative interior of the two edges incident to $v$ and at least two integral
points on the third edge;
\item \emph{Type 3 triangles}: triangles with exactly three integral points on the boundary, one in
the relative interior of each edge.
\end{itemize}
Figure 1 shows these three types of triangles as well as a maximal lattice-free quadrilateral and a split satisfying the properties of Theorem \ref{mlfcb2}.

%% Cornu\'ejols and Margot completely describe when these maximal lattice-free
%% sets in $\R^2$ give an extreme inequality for $\conv(\Rf)$ when $m=2$, based
%% on a complicated algorithm \cite{cm}.  We provide a refinement of the
%% necessary conditions from~\cite{cm} for a lattice-free set to yield an extreme inequality.
%% Our proofs show that an inequality $\gamma \cdot s \geq 1$ is not extreme by
%% explicitly showing the existence of valid inequalities $\gamma^1 \cdot s \geq
%% 1$ and $\gamma^2 \cdot s \geq 1$ such that $\gamma = \frac{1}{2}\gamma^1 +
%% \frac{1}{2}\gamma^2$, where $\gamma^1$ and $\gamma^2$ are found by `tilting' the facets of maximal lattice-free sets. This idea will be explained more in Sections~\ref{sec:tilting} and~\ref{sec:nec_cond}.\smallbreak   

\begin{figure}
\centering
\ifpdf
\includegraphics[scale = .6]{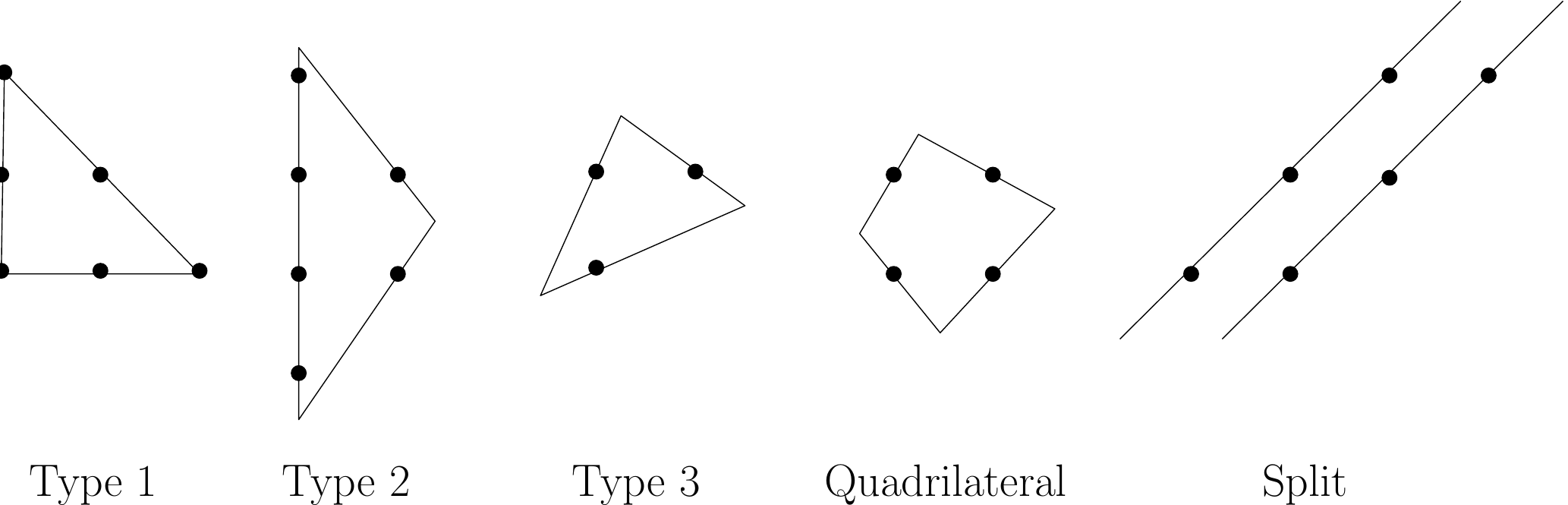}
\else
\includegraphics[scale = .6]{lattice_free_sets.eps}
\fi
%\inputfigeps{lattice_free_sets}
%
\caption{Types of maximal lattice-free convex sets in $\R^2$}
\end{figure}

\section{Description and algorithmic results for the set of all valid
  inequalities for \boldmath$\conv(\Rf)$}\label{sec:alg} 

For the results of this section, we will assume that the conical hull of the
set of rays $\{r^1, \ldots, r^k\}$ is $\R^m$. This simplifies the arguments
presented and implies $k > m$. 

\subsection{Polyhedral structure}

As mentioned in Section~\ref{sec:prelim}, $\conv(\Rf)$ is a polyhedron of blocking type. We will study the {\em blocking polyhedron} of $\conv(\Rf)$, i.e., 
$$\Blocking{\conv(\Rf)} = \bigl\{\, \gamma \in \R^k_+ \bigst \gamma \cdot s \geq 1 \textrm{ for all } s \in \conv(\Rf)\,\bigr\}.$$ 
This is the set of all normal vectors of nontrivial valid inequalities for
$\conv(\Rf)$. We refer to \cite{sch} for a discussion of polyhedra of blocking
type and these related notions.  
It is well-known that for any polyhedron $P$ of blocking type, the set $\Blocking{P}$ is a polyhedron.  

In this section, we give an explicit description of $\Blocking{\conv(\Rf)}$. Moreover,
when $m$ is fixed (not part of the input), our description of $\Blocking{\conv(\Rf)}$ will have polynomially many inequalities. 
From the definitions, it follows that the extreme inequalities for $\conv(\Rf)$ are given by the extreme points of $\Blocking{\conv(\Rf)}$. It is well-known that for a full-dimensional polyhedron like $\conv(\Rf)$, facets and extreme inequalities are equivalent concepts.\smallbreak

We start with the following version of Carath\'eodory's theorem.

\begin{lemma}\label{lem:caratheo}
Let $P$ be a polyhedron given by $P = \conv(\{v^1, \ldots, v^p\}) + \cone(\{r^1, \ldots, r^q\})$ with $dim(P) = n$. For any $x \in P$, there exist subsets $I \subseteq \{1,\ldots, p\}$ and $J \subseteq \{1,\ldots, q\}$ such that
\begin{enumerate}[\rm(i)]
\item $|I| + |J| \leq n+1$,
\item $x \in \conv(\{\,v^i \st i \in I\,\}) + \cone(\{\,r^j\st j \in J\,\})$.
\end{enumerate}
\end{lemma}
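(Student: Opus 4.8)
The plan is to prove this constructive version of Carath\'eodory's theorem for polyhedra by reducing to the classical Carath\'eodory theorem applied in a homogenized (cone) space. First I would introduce the standard lifting: associate to each vertex $v^i$ the point $(v^i, 1) \in \R^{n+1}$ and to each ray $r^j$ the point $(r^j, 0) \in \R^{n+1}$, and let $C = \cone\bigl(\{(v^i,1) \st i\}\cup\{(r^j,0)\st j\}\bigr)$. The key observation is that $x \in P$ if and only if $(x,1) \in C$. Since $\dim(P) = n$, the affine hull of $P$ is $n$-dimensional, so the linear span of $C$ has dimension at most $n+1$; hence $C$ is a cone lying in a subspace of dimension at most $n+1$.

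Next I would invoke the conical Carath\'eodory theorem: any point in a cone generated by a finite set of vectors, living in a $d$-dimensional space, can be written as a nonnegative combination of at most $d$ of the generators. Applying this to $(x,1) \in C$ with $d \le n+1$, we obtain index sets $I \subseteq \{1,\dots,p\}$ and $J\subseteq\{1,\dots,q\}$ with $|I|+|J|\le n+1$ and scalars $\lambda_i \ge 0$ ($i\in I$), $\mu_j\ge 0$ ($j\in J$) such that $(x,1) = \sum_{i\in I}\lambda_i (v^i,1) + \sum_{j\in J}\mu_j (r^j,0)$. Reading the last coordinate gives $\sum_{i\in I}\lambda_i = 1$, so the $\lambda_i$ are convex-combination coefficients; reading the first $n$ coordinates gives $x = \sum_{i\in I}\lambda_i v^i + \sum_{j\in J}\mu_j r^j$, which is exactly condition (ii), and $|I|+|J|\le n+1$ is condition (i).

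One technical point I would need to address carefully: the conical Carath\'eodory bound should be the dimension of the ambient space containing the cone, which here is at most $n+1$ rather than the naive $k+1$ where $k$ would be the dimension we started in; this is why passing to $\aff(P)$ (equivalently, to $\spann(C)$) and using $\dim(P)=n$ matters. A minor subtlety is the degenerate case where $I = \emptyset$, i.e.\ $x$ lies in the recession cone translated to the origin --- but since $x \in P$ and $P$ is nonempty with vertices, the last-coordinate equation $\sum_{i\in I}\lambda_i = 1$ forces $I \neq \emptyset$ automatically, so no separate handling is required.

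The main obstacle, such as it is, is purely bookkeeping: making sure the dimension count is tight (that $\spann(C)$ has dimension at most $n+1$ and not more) and that the classical conical Carath\'eodory theorem is applied in that subspace rather than in $\R^{k+1}$. Once the homogenization is set up correctly, the rest is immediate. I would cite the standard statement of Carath\'eodory's theorem for cones (e.g.\ from~\cite{sch}) rather than reprove it.
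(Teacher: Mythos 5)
Your proposal is correct and follows essentially the same route as the paper, which proves the lemma by the standard homogenization of $P$ (lifting vertices to height $1$ and rays to height $0$) followed by Carath\'eodory's theorem for cones applied in the at most $(n+1)$-dimensional span. The dimension count and the last-coordinate argument you spell out are exactly the details the paper leaves implicit.
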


The lemma follows immediately by the standard homogenization of $P$ and then applying Carath\'eodory's theorem for cones.\smallbreak

Let $\mathcal{I}$ be the set of all subsets~$I$ of $\{1, \ldots, k\}$ such that $\{\,r^j \st j\in I\,\}$ is a basis for $\R^m$. Given any $x \in \Z^m$ and $I \in \mathcal{I}$ such that $x - f \in \cone(\{\,r^j \st r^j \in I\,\})$, let $s_j(x, I)$ be the (non-negative) coefficient of $r^j$ when $x - f$ is expressed in the basis $\{\,r^j \st j \in I\,\}$. Moreover, for any set $I\in \mathcal{I}$, $X(I)$ is the set of all $x \in \Z^m$ such that $x - f \in \cone(\{\,r^j \st j \in I\,\})$.

\begin{proposition}\label{prop:main}

\begin{equation}\label{eq:B(Rf)}
\Blocking{\conv(\Rf)} = \Bigl\{\, \gamma \geq 0 \Bigst \sum_{j \in I} \gamma_j s_j(x,I)  \geq 1  \quad \forall x \in X(I),\quad \forall I \in \mathcal{I} \,\Bigr\}.
\end{equation}

\end{proposition}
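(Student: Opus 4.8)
The plan is to prove the two inclusions separately. Write $Q$ for the right-hand side of~\eqref{eq:B(Rf)}. For the inclusion $\Blocking{\conv(\Rf)} \subseteq Q$, take $\gamma \in \Blocking{\conv(\Rf)}$; this certainly satisfies $\gamma \geq 0$. Fix $I \in \mathcal{I}$ and $x \in X(I)$. By definition of $X(I)$ we have $x \in \Z^m$ and $x - f = \sum_{j \in I} s_j(x,I)\, r^j$ with all $s_j(x,I) \geq 0$. Setting $s_j = s_j(x,I)$ for $j \in I$ and $s_j = 0$ otherwise produces a point $s$ that satisfies~\eqref{M_fk} (with this choice of $x$), hence $s \in \Rf \subseteq \conv(\Rf)$. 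Validity of $\gamma \cdot s \geq 1$ on $\conv(\Rf)$ then gives exactly $\sum_{j \in I}\gamma_j s_j(x,I) \geq 1$, so $\gamma \in Q$.

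The reverse inclusion $Q \subseteq \Blocking{\conv(\Rf)}$ is the substantive direction. Take $\gamma \in Q$; we must show $\gamma \cdot s \geq 1$ for every $s \in \conv(\Rf)$, and since $\gamma \geq 0$ it suffices to check this on the vertices and extreme rays of $\conv(\Rf)$ (the inequality is preserved under convex combinations and under adding nonnegative vectors, because $\conv(\Rf)$ is of blocking type). Alternatively, and more cleanly, I would just take an arbitrary $s \in \Rf$ — the value of $\gamma\cdot s$ on a generic point of $\conv(\Rf)$ is bounded below by its infimum over $\Rf$ plus the blocking structure — so fix $s \in \Rf$ with associated $x = f + \sum_j r^j s_j \in \Z^m$. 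The point $s$ lies in $\cone(\{r^1,\dots,r^k\})$ shifted appropriately; more precisely $x - f = \sum_{j=1}^k r^j s_j \in \cone(\{r^1,\dots,r^k\})$. Now apply Lemma~\ref{lem:caratheo} to the cone $\cone(\{r^1,\dots,r^k\}) = \R^m$: there is a subset $J \subseteq \{1,\dots,k\}$ with $|J| \leq m$ and nonnegative coefficients $(t_j)_{j \in J}$ such that $x - f = \sum_{j \in J} r^j t_j$. Extend $J$ to a set $I \in \mathcal{I}$ (a basis of $\R^m$ contained in the ray index set — possible since the rays span $\R^m$), setting $t_j = 0$ for $j \in I \setminus J$; then $t_j = s_j(x,I)$ by uniqueness of coordinates in a basis, and $x \in X(I)$. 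The defining inequality of $Q$ for this $I$ and $x$ gives $\sum_{j \in I}\gamma_j t_j \geq 1$. Finally, $\gamma \cdot s = \sum_{j=1}^k \gamma_j s_j \geq \sum_{j \in I}\gamma_j t_j \geq 1$, where the first inequality uses $\gamma \geq 0$ together with the fact that replacing the full conic representation $s$ by the sparse one $t$ only drops coordinates — this needs the observation that $\sum_j \gamma_j s_j \ge \sum_{j\in I}\gamma_j t_j$, which holds because both represent $x-f$ and $t$ is obtained from $s$ by a sequence of Carath\'eodory reductions that do not increase any $\gamma$-weighted sum. Then extend from $\Rf$ to $\conv(\Rf)$ by convexity.

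The main obstacle is the last step of the reverse inclusion: justifying $\sum_{j=1}^k \gamma_j s_j \geq \sum_{j \in I}\gamma_j t_j$ when $s$ and $t$ are two different nonnegative representations of the same vector $x - f$. This is \emph{not} automatic — a sparser representation need not have smaller $\gamma$-weight. The correct fix is to not compare two representations of the same point, but rather to argue directly about $\conv(\Rf)$: a point $\bar s \in \conv(\Rf)$ has the form $\bar s = \sum_\ell \lambda_\ell s^\ell + d$ with $\lambda_\ell \geq 0$, $\sum \lambda_\ell = 1$, $s^\ell \in \Rf$, and $d \geq 0$; using $\gamma \geq 0$ we get $\gamma \cdot \bar s \geq \sum_\ell \lambda_\ell (\gamma \cdot s^\ell)$, so it reduces to showing $\gamma \cdot s^\ell \geq 1$ for each $s^\ell \in \Rf$ \emph{at its own point} $x^\ell$. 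For a single $s \in \Rf$ with integer point $x$, we do \emph{not} pass to a sparse representation of $x - f$; instead we observe that \emph{some} $I \in \mathcal I$ and the associated $s_j(x,I)$ satisfy $\sum_{j\in I}\gamma_j s_j(x,I) \le \gamma\cdot s$. This last comparison is exactly where Carath\'eodory is needed: among all the index sets realizing $x - f$ conically, one can iteratively delete rays (keeping the representation conic) without increasing the $\gamma$-weighted sum, arriving at a basic (hence $I \in \mathcal{I}$) representation; here deleting a ray with positive coefficient in a linearly dependent set can always be done so as to not increase the objective $\sum \gamma_j (\cdot)_j$, by the standard sign-argument in the proof of Carath\'eodory's theorem (move along the dependency in the direction that keeps all coefficients $\geq 0$ and does not increase the linear functional $\gamma$). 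Making this reduction argument precise — essentially a weighted version of Carath\'eodory — is the crux; everything else is bookkeeping.
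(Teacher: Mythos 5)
Your route is genuinely different from the paper's. The paper associates to each $\gamma\ge 0$ the set $M_\gamma=\conv\bigl(\bigl\{f+\tfrac{r^j}{\gamma_j}\bigst \gamma_j>0\bigr\}\bigr)+\cone\bigl(\bigl\{r^j\bigst\gamma_j=0\bigr\}\bigr)$, notes $\gamma_j=\psi_{M_\gamma-f}(r^j)$, and reduces both inclusions to the statement that $\gamma$ is valid if and only if $\intr(M_\gamma)\cap\Z^m=\emptyset$ (Theorem~\ref{thm:int_cuts}), verifying lattice-freeness by a scaling argument plus Lemma~\ref{lem:caratheo} in the $x$-space. You stay entirely in the $s$-space. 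Your first inclusion is correct and in fact simpler than the paper's: each constraint of the right-hand side $\Gamma$ of \eqref{eq:B(Rf)} is literally the inequality $\gamma\cdot s\ge 1$ evaluated at the point of $\Rf$ whose support is $I$ and whose coordinates are $s_j(x,I)$. For the converse you reduce to showing that for each $s\in\Rf$ with associated integer point $x$ there is some $I\in\mathcal I$ with $x\in X(I)$ and $\sum_{j\in I}\gamma_j s_j(x,I)\le\gamma\cdot s$; combined with the constraints of $\Gamma$ this gives validity on $\Rf$, hence on $\conv(\Rf)$. That plan is sound and has the merit of avoiding the intersection-cut machinery altogether.

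However, as written the decisive step is not proved: you correctly flag that the naive comparison of two conic representations of $x-f$ fails, but your fix, the ``weighted Carath\'eodory'' reduction, is only asserted (``by the standard sign-argument''), and the standard Carath\'eodory argument involves no objective, so the simultaneous requirements---stay nonnegative, drive a coordinate to zero, and do not increase $\gamma\cdot t$---need a short verification. It does work: given a nontrivial dependency $\sum_{j\in J}\mu_jr^j=0$ on the support $J$ of the current representation $t$, either $\mu$ has entries of both signs, in which case both $t-\theta\mu$ and $t+\theta\mu$ stay nonnegative for small $\theta>0$, each drives some coordinate to zero at a finite $\theta$, and at least one of the two directions does not increase $\gamma\cdot t$; or $\mu$ is one-signed, say $\mu\ge 0$, in which case only $t-\theta\mu$ terminates, but then the objective change $-\theta\,\gamma\cdot\mu\le 0$ holds automatically because $\gamma\ge 0$. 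Alternatively, and more cleanly, note that $\min\{\gamma\cdot t\st \sum_{j=1}^k t_jr^j=x-f,\ t\ge 0\}$ is a feasible linear program (it contains $s$) over a pointed polyhedron, bounded below by $0$, hence attained at a basic feasible solution $t^*$; extending the linearly independent support of $t^*$ to a basis $I\in\mathcal I$ gives $x\in X(I)$, $t^*_j=s_j(x,I)$ for $j\in I$, and $\sum_{j\in I}\gamma_js_j(x,I)=\gamma\cdot t^*\le\gamma\cdot s$. With either completion your argument is a correct, and more elementary, proof of the proposition.
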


\begin{proof}

Let $\gamma$ be any vector in $\R^k_+$. Consider the convex set
\begin{equation}\label{eq:B}
M_\gamma = \conv\bigl(\bigl\{\, f + \tfrac{r^j}{\gamma_j} \bigst \gamma_j > 0\,\bigr\}\bigr) + \cone\bigl(\bigl\{\,r^j \bigst \gamma_j =
0\,\bigr\}\bigr).  
\end{equation}
Since $\cone(\{r^1, \ldots, r^k\}) = \R^m$, we have that $f$ is in the
interior of $M_\gamma$. Observe that $\gamma_j =
\psi_{M_\gamma - f}(r^j)$. Using Theorem~\ref{thm:int_cuts}, it can be shown that $\sum_{i=1}^k\gamma_is_i
\geq 1$ is a valid inequality if and only if $M_\gamma$ does not have any integer
point in its interior. We denote the right hand side of \eqref{eq:B(Rf)}
by $$\Gamma=\Bigl\{\, \gamma \geq 0 \Bigst \sum_{j\in I} \gamma_j s_j(x,I)
\geq 1  \quad \forall x \in X(I),\quad \forall I \in \mathcal{I} \,\Bigr\}.$$ 

We first show that any $\gamma\in \Gamma$ gives the coefficients of a valid inequality. We will show that $M_\gamma$ does not contain any integer point in its interior. Suppose to the contrary and let $\bar x$ be a point in the interior of $M_\gamma$. If $\bar x - f \in \rec(M_\gamma)$, where $\rec$ denotes the recession cone, then $\bar x - f \in \cone\{\,r^j \st \gamma_j = 0\,\}$. Carath\'eodory's theorem for cones then implies that there exists a subset $I$ of $\{\,j \st \gamma_j = 0\,\}$ of size $m$ such that $\bar x - f \in \cone\{\,r^j \st j \in I\,\}$ and therefore $\bar x \in X(I)$. But then $\sum_{j \in I}\gamma_j s_j(\bar x, I) = 0 < 1$, which violates the inequality corresponding to $I$ and $\bar x$ in the definition of $\Gamma$. If $\bar x - f \not\in \rec(M_\gamma)$, then there exists $\mu > 1$ such that $\mu (\bar x - f) + f$ is on the boundary of $M_\gamma$ because $\bar x$ is in the interior of~$M_\gamma$.
This implies that $\mu (\bar x - f) + f$ lies on a facet of $M_\gamma$ and therefore, using Lemma~\ref{lem:caratheo}, there exists a subset $I$ of $\{\,j \st \gamma_j > 0\,\}$ and a subset $J$ of $\{\,j \st \gamma_j = 0\,\}$, with $\mu (\bar x - f) + f \in \conv(\{\,f + \frac{r^j}{\gamma_j}\st j \in I\,\}) + \cone(\{\,r^j\st j \in J\,\})$ and $|I| + |J|$ is at most $m$. Since the number of rays is at least $m+1$, we may assume that $|I| + |J| = m$. Without loss of generality, let us assume that $I = \{1, \ldots, |I|\}$ and $J = \{|I|+1, \ldots, m\}$. This then implies that there exist $\lambda_1 \geq 0, \ldots, \lambda_m \geq 0$ satisfying $\sum_{j=1}^{|I|} \lambda_j = 1$ and
\begin{align*}
\mu (\bar x - f) + f &= \textstyle \sum_{j=1}^{|I|} \lambda_j (f + \frac{r^j}{\gamma_j})
+ \sum_{j=|I|+1}^m \lambda_j r^j, \\
\intertext{thus}
\mu (\bar x - f) &= \textstyle \sum_{j=1}^{|I|} \lambda_j (\frac{r^j}{\gamma_j}) +
\sum_{j=|I|+1}^m \lambda_j r^j, \\
\intertext{and finally}
\bar x - f &= \textstyle \sum_{j=1}^{|I|} (\lambda_j/\mu) (\frac{r^j}{\gamma_j}) + \sum_{j=|I|+1}^m (\lambda_j/\mu) r^j.
\end{align*}

The last equation shows that $\bar x \in X(I \cup J)$. Moreover, \smash{$s_j(\bar x,
I\cup J) = \frac{\lambda_j}{\mu\gamma_j}$} for $1 \leq j\leq |I|$ and \smash{$s_j(\bar
x, I\cup J) = \frac{\lambda_j}{\mu}$} for $|I| + 1 \leq j \leq m$. Substituting
into the left-hand side of the constraint for~$\Gamma$ corresponding to $I\cup J$ and $\bar x$, we get $$\textstyle\sum_{j=1}^{|I|} \gamma_j\cdot\frac{\lambda_j}{\mu\gamma_j} + \sum_{j=|I|+1}^m 0\cdot\frac{\lambda_j}{\mu} = \sum_{j=1}^{|I|} \frac{\lambda_j}{\mu} < 1.$$ The inequality follows from the fact that $\sum_{j=1}^{|I|} \lambda_j = 1$ and $\mu > 1$. Therefore this constraint is violated by $\gamma$. So we reach a contradiction. Hence we conclude that $\intr(M_\gamma) \cap \Z^m = \emptyset$.

\medskip
We now show that if $\sum_{j=1}^k \gamma_j s_j \geq 1$ is a valid inequality, then $\gamma \in \Gamma$. If not, there exists $I \in \mathcal{I}$ and $x \in X(I)$ such that $\sum_{j \in I} \gamma_j s_j(x,I) < 1$. Let $I_+$ be the set $\{\,j \in I \st \gamma_j > 0\,\}$ and $I_0 = I\setminus I_+$. By definition,

\[
\begin{array}{rl}
& x - f = \sum_{j \in I} s_j(x,I)r^j = \sum_{j\in I_+} \gamma_j s_j(x,I)\frac{r^j}{\gamma_j} + \sum_{j \in I_0} s_j(x,I)r^j.
\end{array}
\]
Thus, 
\[
\begin{array}{rl}
& x = \mu f + \sum_{j\in I_+} \gamma_j s_j(x,I)(f + \frac{r^j}{\gamma_j}) + \sum_{j \in I_0} s_j(x, I)r^j,
\end{array}
\]
where $\mu = 1 - \sum_{j \in I} \gamma_j s_j(x,I) > 0$. Since $f \in
\intr(M_\gamma)$, the last equation shows that $x$ is in the interior of $M_\gamma$. This
contradicts the validity of $\sum_{j=1}^k \gamma_j s_j \geq 1$.
\end{proof}

The description of $\Blocking{\conv(\Rf)}$ in Proposition~\ref{prop:main} uses infinitely many inequalities. We now show that we need only finitely many of these inequalities. Given $I \in \mathcal{I}$, let $\ext(X(I))$ denote the extreme points of the convex hull of $X(I)$.

\begin{theorem}\label{thm:main}
$$\Blocking{\conv(\Rf)} = \Bigl\{\,\gamma \geq 0 \Bigst \sum_{j \in I} \gamma_j s_j(x,I)  \geq 1  \quad \forall x \in \ext(X(I)),\quad \forall I \in \mathcal{I}\,\Bigr\}.$$
\end{theorem}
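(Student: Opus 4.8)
The plan is to derive Theorem~\ref{thm:main} from Proposition~\ref{prop:main} by showing that, for each fixed $I \in \mathcal{I}$, the inequality $\sum_{j \in I} \gamma_j s_j(x,I) \geq 1$ holds for \emph{all} $x \in X(I)$ as soon as it holds for all $x \in \ext(X(I))$. Since one inclusion is trivial (every extreme point of $\conv(X(I))$ lies in $X(I)$, so the right-hand set contains $\Blocking{\conv(\Rf)}$... actually the easy inclusion is the reverse: fewer constraints means a larger set, so the set on the right of Theorem~\ref{thm:main} contains $\Blocking{\conv(\Rf)}$), the real content is the other direction: any $\gamma \geq 0$ satisfying the finitely many extreme-point inequalities satisfies all of them, hence lies in $\Blocking{\conv(\Rf)}$ by Proposition~\ref{prop:main}.

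Fix $I \in \mathcal{I}$ and abbreviate the cone $C = \cone(\{r^j \st j \in I\})$, which is simplicial since $\{r^j\}_{j\in I}$ is a basis of $\R^m$. First I would observe that the map $x \mapsto (s_j(x,I))_{j \in I}$ is the restriction to $f + C$ of the \emph{linear} map given by the inverse of the basis matrix (composed with translation by $-f$); in particular $x \mapsto \sum_{j\in I}\gamma_j s_j(x,I)$ is an affine function of $x$ on all of $\R^m$, call it $\ell_\gamma(x)$. So the constraint for the pair $(x,I)$ is simply $\ell_\gamma(x) \geq 1$. Now $X(I) = (f + C)\cap \Z^m$, and $\conv(X(I))$ is a polyhedron whose recession cone is contained in $C$; write $\conv(X(I)) = \conv(\ext(X(I))) + \rec(\conv(X(I)))$ by the Minkowski--Weyl decomposition. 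The recession directions are (a subset of) the lattice directions lying in $C$, and on any such direction $d \in C$ we have $s_j(d,I) \geq 0$ for all $j \in I$ (coefficients in the simplicial cone), hence $\ell_\gamma$ has nonnegative "slope" along $d$ because $\gamma \geq 0$. Therefore, for an arbitrary $x \in X(I)$, writing $x = \sum_t \lambda_t x^t + d$ with $x^t \in \ext(X(I))$, $\lambda_t \geq 0$, $\sum_t \lambda_t = 1$, and $d \in \rec(\conv(X(I))) \subseteq C$, affineness of $\ell_\gamma$ gives
\[
\ell_\gamma(x) = \sum_t \lambda_t\, \ell_\gamma(x^t) + \bigl(\ell_\gamma(d) - \ell_\gamma(0)\bigr) \;\geq\; \sum_t \lambda_t \cdot 1 + 0 \;=\; 1,
\]
using the extreme-point hypothesis on each $\ell_\gamma(x^t)$ and the nonnegative-slope observation for the term $\ell_\gamma(d)-\ell_\gamma(0)=\sum_{j\in I}\gamma_j s_j(d,I) \geq 0$. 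This establishes every inequality in the description of Proposition~\ref{prop:main}, and combined with the trivial inclusion we conclude equality.

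The main thing to get right — the only genuinely non-routine step — is the recession-cone argument: one must make sure that $\conv(X(I))$ really is finitely generated (so that $\ext(X(I))$ is finite and the Minkowski--Weyl decomposition applies) and that its recession cone sits inside the simplicial cone $C$ where the coefficients $s_j$ are guaranteed nonnegative. Finiteness of $\ext(X(I))$ holds because $X(I)$ is the set of integer points of the rational polyhedron $f+C$ (recall $f$ and the $r^j$ are rational), so $\conv(X(I))$ is an integral polyhedron with $\rec(\conv(X(I))) = \rec(f+C) = C$ by standard integer-programming facts (e.g.\ Meyer's theorem / Schrijver Ch.~16). Everything else is bookkeeping with the affine function $\ell_\gamma$ and the sign conditions $\gamma \geq 0$, $s_j(\cdot,I)\geq 0$ on $C$.
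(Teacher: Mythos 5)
Your proposal is correct and follows essentially the same route as the paper's proof: both reduce to showing, for each fixed $I$, that the inequality for an arbitrary $x \in X(I)$ follows from those for $\ext(X(I))$, using that $\conv(X(I))$ is a (rational, hence finitely generated) polyhedron with recession cone contained in $\cone(\{\,r^j \st j \in I\,\})$, decomposing $x$ as a convex combination of extreme points plus a recession direction, and exploiting nonnegativity of $\gamma$ and of the basis coefficients along that direction. Your packaging via the affine function $\ell_\gamma$ is just a notational variant of the paper's coordinate identity $s_j(x,I)=\sum_p \mu_p s_j(x_p,I)+\lambda_j$.
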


\begin{proof}
We show that for any $I\in \mathcal{I}$ and $x \in X(I)$, the inequality $\sum_{j \in I} \gamma_j s_j(x,I)  \geq 1$ is dominated by a convex combination of inequalities corresponding to points in $\ext(X(I))$. Since $\{r^1, \ldots, r^k\}$ and $f$ are all rational, the recession cone of the convex hull of $X(I)$ is the same as $\cone(\{\,r^j\st j \in I\,\})$ (see, for example, Theorem~16.1 in~\cite{sch}). In fact, the convex hull of $X(I)$ is a polyhedron. Therefore, $x$ can be represented as $\sum_{p \in P} \mu_px_p + \sum_{j \in I} \lambda_j r^j$ where $x_p \in \ext(X(I))$ for all $p\in P$ and $\mu_p$ are convex coefficients and $\lambda_j$'s are nonnegative coefficients. This further implies that $x - f = \sum_{p \in P} \mu_p(x_p-f) + \sum_{j \in I} \lambda_j r^j$. 

If we represent $x-f$, $x_p - f$ in the basis $\{\,r^j \st j \in I\,\}$, we
conclude that $s_j(x, I) = \sum_{p\in P}\mu_ps_j(x_p, I) + \lambda_j$. Since
the $\lambda_j$'s are nonnegative, this shows that the inequality
corresponding to $x$ is dominated by a convex combination of the inequalities
corresponding to $x_p$, $p\in P$. 
\end{proof}

\subsection{Complexity of the inequality description of \boldmath$\Blocking{\conv(\Rf)}$}

We now turn to the study of the complexity of the inequality description of
the polyhedron~$\Blocking{\conv(\Rf)}$.\smallbreak

We use the following general result about the integer hull of a polyhedron.  
If $P$ is a polyhedron, we denote by $P_{\mathrm{I}}$ its integer hull, i.e., the convex hull of all integer points contained in $P$. When the dimension is fixed, $P_{\mathrm{I}}$ has only a polynomial number of
vertices, as Cook et al.~\cite{cook-hartmann-kannan-mcdiarmid-1992} showed.
\begin{theorem}
\label{THM:2ineqs}
 Let $P = \{\,  x\in\R^q \st A x\leq b\,\}$ be a
rational polyhedron
 with $A\in\Q^{p\times q}$ and let $\phi$ be the largest binary encoding size
 of any of the rows of the system~$A x\leq b$.  Let
$P_{\mathrm{I}} =
 \mathop{\mathrm{conv}}(P\cap\Z^q)$ be the integer hull of~$P$.  Then
the number of vertices
 of~$P_{\mathrm{I}}$ is at most $2 p^q{(6q^2\phi)}^{q-1}$.
\end{theorem}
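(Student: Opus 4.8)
\emph{Strategy.} This is exactly the estimate of Cook, Hartmann, Kannan and McDiarmid~\cite{cook-hartmann-kannan-mcdiarmid-1992}, so in the paper one simply cites it; the following is the line of argument I would reconstruct. Scale the rows of $A$ so that $A\in\Z^{p\times q}$ with coprime rows, at the cost of only a bounded increase of~$\phi$, and (after a standard reduction that handles a possible lineality space) assume $P$ is pointed. Every vertex $v$ of $P_{\mathrm I}$ is the \emph{unique} point of $P\cap\Z^q$ maximizing some rational objective~$c$. Fix a vertex $u$ of $P$ at which $c$ attains its LP-maximum over $P$, and fix a basis $B\subseteq\{1,\dots,p\}$ of $q$ linearly independent constraints tight at~$u$, so that $A_B$ is nonsingular and $A_Bu=b_B$.

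The key observation is that the \emph{slack map} $x\mapsto z(x):=\bigl(b_i-a_i\cdot x\bigr)_{i\in B}$ is an affine bijection of $\R^q$ that carries $P\cap\Z^q$ onto $\Lambda\cap Q_B$, where $\Lambda$ is a translate of $\Z^q$ and $Q_B\subseteq\R^q_{\ge 0}$ is the ``standard-form'' polyhedron obtained from the non-basic constraints. Moreover, since $u$ is LP-optimal for $c$, the vector $c$ lies in the normal cone of $P$ at $u$, i.e. $c=\sum_{i\in B}\mu_i a_i$ with all $\mu_i\ge 0$; a short computation then shows $c\cdot x=\mathrm{const}-\sum_{i\in B}\mu_i z_i(x)$. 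Hence $v$ maps to the unique minimizer of a \emph{nonnegative} linear function over $\Lambda\cap Q_B$, that is, to a vertex $z(v)$ of $(Q_B)_{\mathrm I}$ that is moreover componentwise-minimal among the feasible lattice points. The map $v\mapsto(u,B,z(v))$ is injective.

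It remains to bound, for a fixed basis~$B$, the number of such $z(v)$. There are at most $\binom{p}{q}\le p^q$ bases, which yields the factor $p^q$ (the remaining factor $2$ absorbs sign bookkeeping and the reduction to the pointed case). If $v$ is itself a vertex of~$P$ we are already done, since $P$ has at most $p^q$ vertices; otherwise $v$ lies in the relative interior of a proper face $G$ of~$P$ of dimension $d\ge 1$, the $q-d\ge 1$ constraints defining $\aff(G)$ are tight at~$v$, and one chooses $B$ to contain them — so $z(v)$ has at least one zero coordinate, which is what reduces the exponent from~$q$ to~$q-1$. The heart of the matter — and the step I expect to be the main obstacle — is a quantitative proximity estimate showing that the \emph{remaining} coordinates of $z(v)$ can be taken to lie in $\{0,1,\dots,O(q^2\phi)\}$; intuitively, a minimal exposed lattice point of a standard-form polyhedron cannot sit far (in units governed by the encoding size) from where the defining constraints pin it down, because the region of~$Q_B$ beyond it is lattice-point-free and hence ``flat''. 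Carrying this out with precisely the constant $6$ needed to beat the polynomial blow-up of the encoding size incurred when passing to $Q_B$ (whose data involve $A_B^{-1}$) is the genuine content of the Cook--Hartmann--Kannan--McDiarmid bound, and for the purposes of this paper it is cleanest to invoke their theorem directly.
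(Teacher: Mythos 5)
The paper gives no proof of this statement at all: it is quoted as a known result of Cook, Hartmann, Kannan and McDiarmid and used as a black box (with Hartmann's thesis cited for the algorithmic counterpart), so your decision to simply invoke their theorem is exactly the paper's approach, and the surrounding reduction sketch is not needed for the paper's purposes. One caution about your reconstruction, should you ever try to complete it: the claim that the remaining slack coordinates of an integer-hull vertex can be taken in $\{0,1,\dots,O(q^2\phi)\}$ is false as stated --- those coordinates can be exponentially large in $\phi$ (already for a knapsack $\{x\ge 0:\ x_1+2^{L}x_2\le 2^{2L}\}$); the actual Cook--Hartmann--Kannan--McDiarmid argument, in the tradition of Hayes--Larman, instead partitions each slack coordinate into $O(q^2\phi)$ logarithmic size classes and shows that two distinct vertices optimal for objectives in the same simplicial normal cone cannot share a size-class pattern, which is where the factor $(6q^2\phi)^{q-1}$ (and the constant $6$) really comes from.
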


Moreover, Hartmann \cite{hartmann-1989-thesis} gave an algorithm for
enumerating all the vertices, which runs in polynomial time in fixed
dimension.  
%For the special case of $m=2$, where $P$ is a cone, 
%a simpler algorithm is available:  Anderson et al. \cite{alww} gave a
%polynomial time algorithm based on continued fractions and special properties of
%the Hilbert basis in 2 dimensions.
\smallbreak   

We thus obtain:

\begin{remark}\label{rem:poly_B(Rf)}
Let the dimension $m$ be a fixed number. Since all the rays $r^1, \ldots, r^k$
and $f$ are rational, by Theorem~\ref{THM:2ineqs}, the cardinality of
$\ext(X(I))$ is bounded by a polynomial in the binary encoding length of the
data $r^1, \ldots, r^k, f$ for any $I \in \mathcal{I}$. Moreover, the
cardinality of $\mathcal{I}$ is at most $k\choose m$, which is a polynomial in
$k$. Hence, $\Blocking{\conv(\Rf)}$ is a polyhedron which can be represented as the
intersection of polynomially many half-spaces. 
\end{remark}

\subsection{Finding the strongest cuts}

Let $\gamma^*$ be the optimal solution to the following convex program. 
\begin{equation}\label{eq:conv_prog}
\begin{aligned}
\min\ &\Vert \gamma \Vert_p \\
\text{s.t.}\ & \sum_{j \in I} \gamma_j s_j(x,I) \geq 1 & \quad \forall x \in \ext(X(I)),\quad \forall I \in \mathcal{I}, \\
& \gamma \geq 0.
\end{aligned}
\end{equation}

Theorem~\ref{thm:main} implies that $\gamma^*$ gives the coefficients of a valid inequality with minimum $l_p$ norm. There is an interesting interpretation for the optimal cut with respect to the $l_2$ norm. If we view \eqref{M_fk} as the optimal LP tableau, then valid inequalities for $\conv(\Rf)$ are cuts which separate the current LP solution, $x=f, s = 0$ from the integer hull. The valid inequality with minimum $l_2$ norm is then the ``deepest'' cut, i.e., the cut whose Euclidean distance from the current LP solution is the maximum. The other $l_p$ norms are also often used as a criterion for choosing the ``best'' cut.

\begin{remark} Since the feasible region for the convex
  program~\eqref{eq:conv_prog} is described by polynomially many inequalities
  by Remark~\ref{rem:poly_B(Rf)}, we can solve these programs in polynomial
  time. However, from a practical point of view, it might be easier to solve
  these programs using a cutting-plane or separation approach. We present a
  polynomial time separation algorithm for the convex program when the
  dimension~$m$ is an arbitrary fixed number, which uses integer feasibility
  algorithms in fixed dimensions. This avoids explicitly enumerating $I \in
  \mathcal{I}$ and $\ext(X(I))$, which could be a nontrivial and time-consuming task.

Given a point $\gamma$, we
need to decide if it is feasible for \eqref{eq:conv_prog}. This is achieved by
testing if the convex set $M_\gamma$ defined in \eqref{eq:B} has an integer point in
its interior. 

If $\smash{M_\gamma}$ is tested to have no integer point in its
interior, then Theorem~\ref{thm:int_cuts} implies that the inequality
$\sum_{j=1}^k\gamma_j s_j \geq 1$ is valid. The proof of
Proposition~\ref{prop:main} shows that $\gamma$ is therefore feasible to
\eqref{eq:conv_prog}. 

On the other hand, if $M_\gamma$ is tested to have an integer point $\bar x$ in
its interior, then the proof of Proposition~\ref{prop:main} shows that some
constraint corresponding to $I\in\mathcal{I}$ such that $\bar x \in X(I)$ is
violated. 

By testing each subset of $\{r^1, \ldots, r^k\}$ of size $m$, we can
find this violated constraint in $O(mk^m)$ calls to an integer feasibility
oracle. When $m$ is fixed, this is a polynomial in $k$.  

\end{remark}

%%\begin{remark} 
Note that for the $l_1$ and $l_\infty$ norms, the optimization
  problem \eqref{eq:conv_prog} can be changed to a linear program by a
  standard reformulation.\smallbreak

Finding the valid inequality with minimum $l_\infty$ norm admits an alternative algorithm, which avoids solving
\eqref{eq:conv_prog}. This again utilizes only integer feasibility algorithms
for fixed dimensions. This approach could be more practical than solving the
linear program because it would avoid explicitly enumerating
$I \in \mathcal{I}$ and $\ext(X(I))$ and also does not require to use a
cutting-plane procedure.

Instead, we can use a simple search procedure as follows.
For any scalar $\alpha>0$, let $$C(\alpha) = \conv(\{\,f
+ \alpha r^j\st j = 1, \ldots, k\,\}).$$  

Let $\sum_{j=1}^k\gamma_j s_j \geq 1$ be a valid inequality. Let $M_\gamma$ be defined as in \eqref{eq:B}. Observe that $C(1/\Vert \gamma \Vert_\infty) \subseteq M_\gamma$. Since $M_\gamma$ does not contain any integer point in its interior, neither does $C(1/\Vert \gamma \Vert_\infty)$. Therefore, to find the inequality with optimal $l_\infty$ norm, we need to find the maximum
possible value of $\alpha$, such that $\intr(C(\alpha))\cap \Z^m = \emptyset$. Let this maximum be~$\alpha^*$. 

The maximum~$\alpha^*$, of course, corresponds to a set $C(\alpha^*)$ that has
an integer point on one of its facets.  This shows that $\alpha^*$ is a
rational number, for which, using standard techniques, we 
can determine a bound on its numerator and denominator of polynomial binary
encoding length. 

Then we can use the asymptotically optimal algorithm by Kwek and
Mehlhorn~\cite{Kwek2003-rational-search} for searching a rational number~$\alpha^*$ of
bounded numerator and denominator, using only queries of the type ``Is $\alpha^*
\leq \alpha$?''  This is similar to a binary search algorithm.  
Each such query amounts to testing $\intr(C(\alpha))\cap \Z^m = \emptyset$ for
some current estimate~$\alpha$ for $\alpha^*$. Thus, this query step can be
solved by integer feasibility algorithms for fixed dimensions. 

%%\end{remark}

%%%%%%%%%%%%%%%%%%%%%%%%

\section{The Tilting Space}\label{sec:tilting}

%%%%%%%%%%%%%%%%%%%%%%%%

% and let $Y^i_B \subseteq Y_B$ be the set of integer points that lie on facet $i$.
% That is, $b^i(y - f) = 1$ for all $y \in Y^i$.  Let $\Y = (Y^1, \dots, Y^n)$.
%We are interested in polytopes which have most of the same integer points as $B$ does, and have all the rays pointing to similar %facets as $B$.  These are found by tilting $B$.

%We are interested in the dimension of the set of normal vectors that can define the facets of similar polytopes that 
%contain the same integer points on each facet and has the same set of rays pointing to each facet.

%%\subsection{Tilting Space}

For any matrix $B = (b^1;\dots;b^n)\in \R^{n \times m}$, 
let $Y(B)$ be the set of integer points~$y^j$ contained in 
\begin{equation*}
\M(B) = \{\,x \in \R^m \st b^i\cdot (x-f) \leq 1\text{ for $i=1, \dots, n$}\,\}.
\end{equation*}
If $\M(B)$ is a lattice-free convex set, all elements of~$Y(B)$ of course lie
on the boundary of~$\M(B)$, that is, on at least one facet~$F_i$ of~$\M(B)$,
induced by a constraint $b^i\cdot(x-f)\leq 1$.  \smallbreak

In the present paper, we prove necessary conditions for $\sum_{j=1}^k
\psi_B(r^j)s_j \geq 1$ to be an extreme inequality mainly by perturbation
arguments.  Given a matrix~$B$, we show under suitable hypotheses the
existence of certain small perturbations $A$~and~$C$ of~$B$ such that 
the inequality $\sum_{j=1}^k
\psi_B(r^j)s_j \geq 1$ is a strict convex combination of the inequalities $\sum_{j=1}^k
\psi_A(r^j)s_j \geq 1$ and $\sum_{j=1}^k
\psi_C(r^j)s_j \geq 1$.
Geometrically, these perturbations correspond to slightly `tilting' the facets~$F_i$
of~$\M(B)$.
In our proofs, it is convenient to choose, for every $i=1, \dots, n$, 
a certain subset $Y_i\subseteq Y(B)\cap F_i$ of the integer points on the facet~$F_i$. 
When we tilt the facet~$F_i$, we require that this subset~$Y_i$ continues to lie in
the tilted facet; this obviously restricts how we can change the facet.  This
is illustrated in Figure~\ref{fig:tilting-intro}.
\begin{figure}[t]

%\centering
\centering

\scalebox{0.85}{%
\ifpdf
\input{figures/figureQuadAndType1.pdftex_t}% \input{#1.pdf_t}
\else
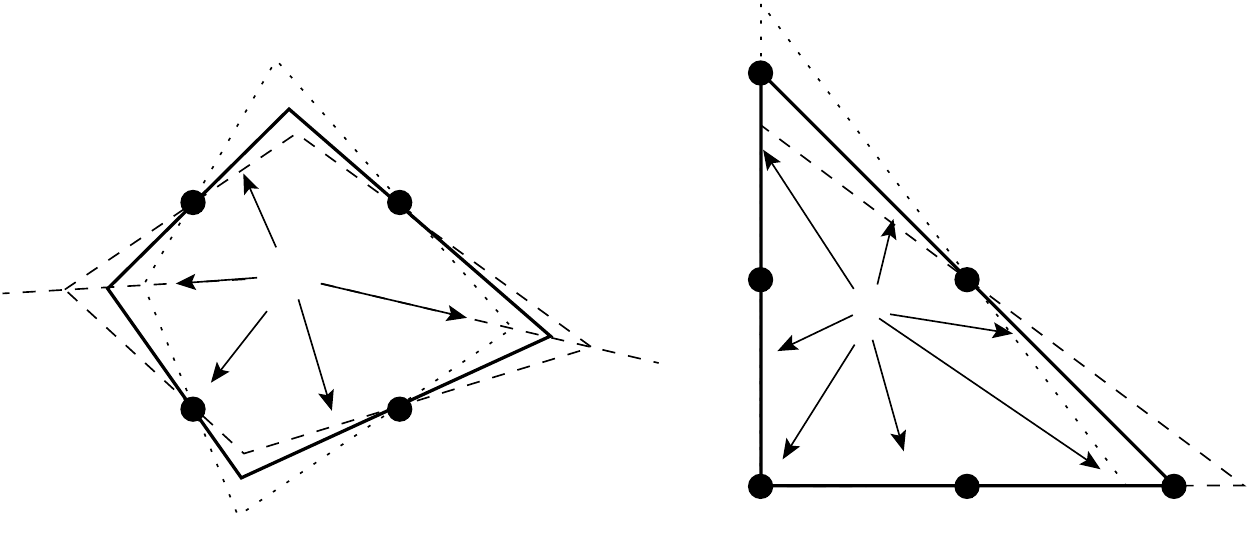
\fi}
%%%
%  \begin{center}
%    \texttt{(a) Picture of a quadrilateral with tilts.}
%
%    \texttt{(b) Picture of a Type 1 triangle that is not extreme with tilts.}
%
%    \texttt{(Indicate all labels mentioned in the caption.)}
%
%  %
  \caption{Tilting the facets of maximal lattice-free sets.
    (a)~In this particular quadrilateral, setting $Y_1=\{y^1\}, \dots, Y_4=\{y^4\}$
    allows to tilt all facets~$F_1,\dots,F_4$.  This still holds true if we
    ensure that all the corner rays remain corner rays for the perturbation (constraint~\eqref{eq:tilting-2}).
    (b)~In this Type-1 triangle, setting $Y_1=\{y^1\}$ (a strict subset of
    $Y(B) \cap F_1$) and $Y_2=Y(B)\cap F_2$, $Y_3=Y(B)\cap F_3$, then facet $F_1$
    can tilt, whereas facets $F_2$ and $F_3$ remain fixed.  This still holds
    true if we ensure that all the non-corner rays remain non-corner rays for
    the perturbation (constraint~\eqref{eq:tilting-3}).  Note that 
    choosing tilts from the set~$\mathcal S(B)$ ensures that no new integer
    points enter.  However, integer points may lie outside the set after
    tilting, such as the top and right vertices in this example.}
  \label{fig:tilting-intro}
\end{figure}

We also need to control the interaction of the rays~$r^j$ and the facets. We
will often refer to the set of \emph{ray intersections} $$ P = \bigl\{\,p^j \in \R^2\bigst
p^j = f + \tfrac{1}{\psi_B(r^j)} r^j ,\ \psi_B(r^j) > 0,\ j=1, \dots, k\,\bigr\},$$ 
that is, the points~$p^j$
where the rays~$r^j$ meet the boundary of the set~$\M(B)$.  

It is easy to see that  whenever
$\psi_B(r^j) > 0$, the set
$I_B(r^j) = \argmax_{i=1, \dots, n} b^i\cdot  r$ is the index set of all inequalities
of $\M(B)$ that the ray intersection $p^j = f + \frac{1}{\psi_B(r^j)} r^j$
satisfies with equality. 

In particular, for $m=2$, when all the inequalities corresponding to the rows
of $B$ are facets of $M(B)$, we have $|I_B(r^j)|= 1$ when $r^j$ points to the
relative interior of a facet, and $|I_B(r^j)|= 2$ when $r^j$ points to a vertex of
$\M(B)$. In this second case, we
call $r$ a \emph{corner ray} of $\M(B)$. Again see Figure~\ref{fig:tilting-intro}. When $M(B)$ is a split in $\R^2$, $|I_B(r^j)|=1$ if $r^j$ is {\em not} in the recession cone of $M(B)$ and $|I_B(r^j)|=2$ when $r^j$ is in the recession cone.
% TILTING SPACE DEFINITION
%\begin{definition}
%We define the \emph{tilting space}  $\T(B)\subset \R^{m\times n}$ as  
%\begin{align*}
%\T(B) = \{ (a^1, \dots, a^n)\in \R^{m\times n} | \ &a^i(y - f) = 1 & &\text{for} \ y \in Y(i), i = 1, \dots, n\\
%                              & a^{i} r^j = a^{i'} r^j   & & \text{for} \ i,i' \in I_B(r^j)\\
%                              &a^{i} r^j >  a^{i'} r^j   & & \text{for} \ i \in I_B(r^j), i' \notin I_B(r^j)  \} 
%\end{align*}
%\end{definition}
%Maybe a better alternative definition.
\begin{definition}
Let $\Y$ denote the tuple $(Y_1, \ldots, Y_n)$.  The \emph{tilting
  space}  $\T(B, \Y)\subset \R^{n\times m}$ is defined as  
the set of matrices $A = (a^1; \dots; a^n)\in\R^{n\times m}$ that satisfy the
following conditions:
\begin{subequations}
  \begin{align}
    a^i\cdot (y - f) &= 1 && \text{for} \ y \in Y_i,\ i=1, \dots, n,\label{eq:tilting-1}\\
    a^{i}\cdot  r^j &= a^{i'}\cdot  r^j   && \text{for} \ i,i' \in I_B(r^j),\label{eq:tilting-2}\\
    a^{i}\cdot r^j &> a^{i'}\cdot r^j && \text{for} \ i \in I_B(r^j),\ i'
    \notin I_B(r^j).\label{eq:tilting-3}
  \end{align}
\end{subequations}
\end{definition}
Constraint~\eqref{eq:tilting-2} implies that if $r^j$ hits a facet~$F_i$ of~$\M(B)$, then
it also needs to hit the same facet of~$\M(A)$.  In particular, for $m=2$, this
means that if $r^j$ is a corner ray of $\M(B)$, then $r^j$ must also be a
corner ray for $\M(A)$ if $A \in \T(B,\Y)$.  
Constraint~\eqref{eq:tilting-3} enforces that if $r^j$ does not hit a facet~$F_i$
of~$\M(B)$, then it also does not hit the same facet of~$\M(A)$. 
Thus we have $I_A(r^j) = I_B(r^j)$ for all rays $r^j$ if $A\in \T(B,\Y)$.
%%  the point $f + \frac{1}{\psi_B(r)} r$ is {\em strictly} valid for all other facet defining inequalities of $\M(B)$ that it does not touch, ensuring that

%% When we only wish to fix integer points on the relative interiors of the
%% facets, we will use $\T(B) := \T(B, \Y)$ where $\Y$ contains only those integer
%% points.  Note that for Type~3 triangles and quadrilaterals, all the integer
%% points lie in the relative interiors of the facets. 
\smallbreak

Note that $\T(B,\Y)$ is cut out by linear equations and strict linear inequalities only and,
since we always have $B\in \T(B,\Y)$, it is non-empty.  Thus it is a convex set
whose dimension is the same as that of the affine space
defined by the equations, \eqref{eq:tilting-1}~and~\eqref{eq:tilting-2}, only. 
By $\mathcal N(B,\Y)\subset \R^{n\times m}$ we denote the linear space
parallel to this affine space, in other words the null space of these equations.

If $\dim \T(B, \Y) \geq 1$, we can find two other matrices $A$ and $C$ in $T(B,\Y)$
such that $B$ is a strict convex combination of $A$ and $C$. This will have the following important consequence which says that the inequality derived using $\M(B)$ is a convex combination of the inequalities derived using $\M(A)$ and $\M(C)$.

%CONVEX COMBINATION MLFCB LEMMA
\begin{lemma}\label{lemma:convex_combination}
Suppose $A,C \in \T(B, \Y)$ with $B = \alpha A + (1-\alpha)C$, $\alpha \in (0,1)$.
  Then 
  $$\psi_B(r^j) = \alpha \psi_{A}(r^j) + (1-\alpha) \psi_{C}(r^j) \quad  \text{for}  \ j=1,\dots, k.$$
\end{lemma}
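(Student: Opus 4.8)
The plan is to reduce the equality $\psi_B(r^j)=\alpha\psi_A(r^j)+(1-\alpha)\psi_C(r^j)$ to a statement about a single row index, exploiting the fact that $A$ and $C$ both lie in the tilting space $\T(B,\Y)$ and hence satisfy $I_A(r^j)=I_B(r^j)=I_C(r^j)$ for every ray. First I would fix a ray $r^j$ and split into the two cases governed by the sign of $\psi_B(r^j)$. Recall from Theorem~\ref{thm:formula} that $\psi_B(r^j)=\max_{i} b^i\cdot r^j$, and similarly for $A$ and $C$; the key observation is that the index sets achieving this maximum coincide for all three matrices.

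In the case $\psi_B(r^j)>0$, pick any $i\in I_B(r^j)$. By~\eqref{eq:tilting-2}, for $A\in\T(B,\Y)$ the value $a^i\cdot r^j$ is the same for every $i\in I_B(r^j)$, and by~\eqref{eq:tilting-3} it strictly exceeds $a^{i'}\cdot r^j$ for $i'\notin I_B(r^j)$; hence $\psi_A(r^j)=a^i\cdot r^j$ for that chosen $i$, and likewise $\psi_C(r^j)=c^i\cdot r^j$ and $\psi_B(r^j)=b^i\cdot r^j$. Since $B=\alpha A+(1-\alpha)C$ means $b^i=\alpha a^i+(1-\alpha)c^i$ row by row, taking the inner product with $r^j$ gives the claimed identity immediately. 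In the case $\psi_B(r^j)=0$ (i.e.\ $r^j$ lies in the recession cone of $M(B)$), I would argue that $I_B(r^j)$ is still the index set of rows maximizing $b^i\cdot r^j$, that this maximum value is $0$ for $b^i\cdot r^j$ with $i\in I_B(r^j)$, and that~\eqref{eq:tilting-1} forces $a^i\cdot r^j=0$ as well for $A\in\T(B,\Y)$ whenever $r^j$ is a recession direction pinned by the integer points in $Y_i$; more robustly, one observes $\psi_A(r^j)\le\psi_B(r^j)$ is not automatic, so instead I would note that $a^i\cdot r^j=b^i\cdot r^j=c^i\cdot r^j$ on $I_B(r^j)$ by~\eqref{eq:tilting-2} applied with the degenerate interpretation discussed after Theorem~\ref{thm:formula}, and that the strict inequalities~\eqref{eq:tilting-3} keep the argmax on $I_B(r^j)$, so the same linear-combination argument applies.

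The main obstacle I anticipate is the bookkeeping around the $\psi_B(r^j)=0$ case and, more generally, making airtight the claim that $\psi_A(r^j)$ is realized \emph{exactly} on the index set $I_B(r^j)$ for every $A\in\T(B,\Y)$ rather than merely dominated by $\max_{i\in I_B(r^j)}a^i\cdot r^j$. This requires combining~\eqref{eq:tilting-2} and~\eqref{eq:tilting-3} to conclude $I_A(r^j)=I_B(r^j)$ as asserted in the remark preceding the lemma, and then checking that the common value $a^i\cdot r^j$ ($i\in I_B(r^j)$) is genuinely the maximum over all rows of $A$. Once that identification is in place, the convexity computation is a one-line substitution, done uniformly over $j=1,\dots,k$, and the proof is complete.
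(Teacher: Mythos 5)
Your proposal is correct and takes essentially the same route as the paper's proof: combine \eqref{eq:tilting-2} and \eqref{eq:tilting-3} to conclude $I_A(r^j)=I_B(r^j)=I_C(r^j)$, pick any $i\in I_B(r^j)$ so that $\psi_A(r^j)=a^i\cdot r^j$, $\psi_C(r^j)=c^i\cdot r^j$, $\psi_B(r^j)=b^i\cdot r^j$, and finish with the row-wise identity $b^i=\alpha a^i+(1-\alpha)c^i$. One minor remark: the case split on the sign of $\psi_B(r^j)$ is unnecessary since $\psi_A(r^j)=\max_i a^i\cdot r^j$ by definition and the argmax argument works uniformly, and your side claim that \eqref{eq:tilting-2} gives $a^i\cdot r^j=b^i\cdot r^j$ is not what that constraint says (it only equates rows of $A$ with each other), but nothing in your argument actually relies on it.
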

\begin{proof}
Let $j \in \{1,\dots ,k\}$.  Since $A, C \in \T(B, \Y)$ we know that $I_{B}(r^j) = I_{A}(r^j) = I_{C}(r^j)$.
Hence, let $i \in I_{B}(r^j)$.  Then  
\begin{align*}
\alpha \psi_{A}(r^j) + (1-\alpha) \psi_{C}(r^j) 
&= \alpha a^i\cdot  r^j + (1-\alpha) c^i\cdot   r^j\\
&= (\alpha a^i + (1-\alpha) c^i)\cdot   r^j
= b^i\cdot   r^j
= \psi_{B}(r^j).\tag*{\qed}
\end{align*}\pushQED\relax \end{proof}

Following the definition of extreme inequality, we see that finding such lattice-free polytopes $\M(A)$ and $\M(C)$ 
would imply that $\sum_{j=1}^k\psi_B(r^j)s_j \geq 1$ is not extreme provided that 
$\psi_{A}(r^j) \neq \psi_{C}(r^j)$ for some $j = 1, \dots, k$.  We will first handle the lattice-free condition, and later, via case analysis, we will argue that we can find distinct inequalities.\smallbreak

Next we introduce a tool that helps to ensure that no extra lattice points lie
in the set after tilting the facets.  To this end, 
consider the set
  $$\S(B) := \{\,A = (a^1; \dots; a^n) \in \R^{n \times m} \st Y(A) \subseteq Y(B)\,\}.$$   
%FULL DIMENSIONAL
\begin{lemma}\label{lemma: S(B) full-dimensional}
  Let $B\in \R^{n\times m}$ be such that $\M(B)$ is a bounded maximal lattice-free set.  Then $\S(B)$
  contains an open neighborhood of $B$ in the topology of $\R^{n\times m}$.
\end{lemma}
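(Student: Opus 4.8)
The plan is to show that small perturbations of $B$ do not introduce new integer points into $\M(B)$, while possibly losing some old ones (which is allowed, since $\S(B)$ only requires $Y(A)\subseteq Y(B)$, not equality). Fix a bounded maximal lattice-free $\M(B)$. Since $\M(B)$ is bounded and lattice-free, there are only finitely many integer points in $\M(B)$, namely $Y(B)$, and all of them lie on the boundary. Moreover, $\M(B)$ being \emph{maximal} lattice-free, it is full-dimensional and its recession cone is trivial (it is a polytope), so each defining inequality $b^i\cdot(x-f)\le 1$ is actually a facet. The key dichotomy is: every integer point $z\in\Z^m$ either lies in $\M(B)$ (so $z\in Y(B)$) or lies strictly outside $\M(B)$, meaning $b^i\cdot(z-f) > 1$ for some $i$.

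First I would establish a uniform bound on which integer points can possibly matter. Because $\M(B)$ is bounded, choose a box (or ball) $\mathcal{B}$ of radius $\rho$ around $f$ containing $\M(B)$ in its interior. For a perturbation $A = (a^1;\dots;a^n)$ with $\|a^i - b^i\|$ small for all $i$, the set $\M(A)$ is contained in a slightly enlarged box, hence in $2\mathcal{B}$ say; so only the finitely many integer points in $2\mathcal{B}\cap\Z^m$ can ever lie in $\M(A)$. This reduces the problem to finitely many integer points $z_1,\dots,z_N$ (those in $2\mathcal{B}$ but not in $Y(B)$): I must show that for $A$ close enough to $B$, each such $z_\ell$ is \emph{not} in $\M(A)$, i.e., $a^i\cdot(z_\ell - f) > 1$ for some $i$. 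For each such $z_\ell\notin Y(B)$, since $z_\ell\notin\M(B)$ there is an index $i_\ell$ with $b^{i_\ell}\cdot(z_\ell - f) = 1 + \delta_\ell$ for some $\delta_\ell > 0$. The map $A\mapsto a^{i_\ell}\cdot(z_\ell - f)$ is continuous (indeed linear) in $A$, so there is an $\varepsilon_\ell > 0$ such that $\|A - B\| < \varepsilon_\ell$ implies $a^{i_\ell}\cdot(z_\ell - f) > 1$, whence $z_\ell\notin\M(A)$. Taking $\varepsilon = \min_\ell \varepsilon_\ell > 0$ (a finite minimum of positive numbers), every $A$ with $\|A - B\| < \varepsilon$ satisfies $Y(A)\subseteq 2\mathcal{B}\cap\Z^m$ and excludes every $z_\ell$, hence $Y(A)\subseteq Y(B)$, i.e., $A\in\S(B)$. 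This exhibits the desired open neighborhood.

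One technical point I would want to nail down is that the enlarged-box containment $\M(A)\subseteq 2\mathcal{B}$ really does hold uniformly for $A$ near $B$; this uses boundedness of $\M(B)$ in an essential way. Concretely, since $\M(B)$ is a bounded polytope with $f$ in its interior, its defining inequalities $b^i\cdot(x-f)\le 1$ have the property that the directions $b^i$ positively span $\R^m$ (otherwise $\M(B)$ would have a recession direction). This is an open condition on the tuple $(b^1,\dots,b^n)$, so it persists under small perturbation, and a compactness/continuity argument gives a uniform radius bound for $\M(A)$. I would phrase this as: there exist $\varepsilon_0 > 0$ and $R > 0$ such that $\|A - B\| < \varepsilon_0$ implies $\M(A)\subseteq \{x : \|x - f\|\le R\}$; then take $2\mathcal{B}$ to be this ball of radius $R$ and intersect $\varepsilon_0$ with the $\varepsilon$ above.

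The main obstacle is precisely this uniform boundedness step: individual perturbations of facets could, a priori, make $\M(A)$ unbounded or very large, so one cannot naively restrict attention to a fixed finite set of integer points without first controlling the size of $\M(A)$. Once that is handled — via the open condition that the $b^i$ positively span $\R^m$ — the rest is a routine finite-intersection-of-open-conditions argument, and the proof concludes.
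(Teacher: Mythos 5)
Your proposal is correct, and its overall skeleton coincides with the paper's: first get a uniform bound on $\M(A)$ for all $A$ in a neighborhood of $B$, thereby reducing to finitely many candidate integer points, and then exclude each such point by the persistence of a strict inequality $b^{i}\cdot(z-f)>1$ under small perturbation, finishing with a finite intersection of open conditions. The genuine difference is how the uniform boundedness step is justified. The paper invokes a classical parametric linear programming result (Martin's lemma, stated as Theorem~\ref{thm: locally-bounded-solution-set}): since the feasible region $\M(B)$ is nonempty and bounded, there is an open neighborhood of $B$ over which the union of the feasible sets $\M(A)$ stays bounded. You instead give (a sketch of) an elementary, self-contained argument: boundedness of $\M(B)$ with $f$ in its interior is equivalent to the $b^i$ positively spanning $\R^m$, equivalently $\min_{\|u\|=1}\max_i b^i\cdot u>0$, and this quantity depends continuously on the matrix, so it stays bounded away from $0$ near $B$, which yields a uniform radius for $\M(A)$. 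Your route buys independence from the cited literature at the cost of having to spell out this compactness/continuity estimate, which you only outline but which is routine; the paper's route buys brevity by outsourcing exactly this point. One small remark: your aside that maximality forces every inequality $b^i\cdot(x-f)\le 1$ to be a facet is neither needed nor guaranteed (the matrix $B$ may contain redundant rows), but since you never use it, it does not affect the argument.
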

This follows from now-classic results in the theory of parametric linear
programming.  Specifically, consider a parametric linear program, $$\sup\{\,
c(t) x : A(t) x \leq b(t)\,\} \in\R\cup\{\pm\infty\},$$ 
where all coefficients
depend continuously on a parameter vector $t$ within some parameter region
$\mathcal R \subseteq \R^q$.  It 
is a theorem by D.~H.~Martin~\cite{martin-1975-continuity-maximum} that the optimal
value function is upper semicontinuous in every parameter point~$t_0$ such
that the solution set (optimal face) is bounded, relative to the set of parameters where the
supremum is finite.  Here we only make use of a lemma used in the proof:
\begin{theorem}[D.~H.~Martin~\cite{martin-1975-continuity-maximum}, Lemma~3.1]
  \label{thm: locally-bounded-solution-set}
  Suppose that the solution set for $t=t_0$ is non-empty and bounded.  Then,
  in parameter space, there is an open neighborhood~$\mathcal O$ of~$t_0$ such that the
  union of all solution sets for $t\in \mathcal O$ is bounded. 
\end{theorem}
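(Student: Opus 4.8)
The plan is to argue by contradiction, after first reducing boundedness of the solution set to a strict sign condition on the recession cone. Write $F(t)=\{\,x\in\R^{N}\st A(t)x\le b(t)\,\}$ for the feasible region, $v(t)$ for the optimal value, $S(t)$ for the solution set, and $C_0=\{\,d\st A(t_0)d\le 0\,\}$ for the recession cone of $F(t_0)$. Since $S(t_0)\neq\emptyset$ the supremum at $t_0$ is attained, so $c(t_0)\cdot d\le 0$ for every $d\in C_0$, and boundedness of $S(t_0)$ is equivalent to $c(t_0)\cdot d<0$ for all $d\in C_0\setminus\{0\}$. By compactness of $C_0\cap\{\|d\|=1\}$ this yields a $\delta>0$ with $c(t_0)\cdot d\le-\delta$ on that set. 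Moreover, $d\in\ker A(t_0)$ would force $\pm d\in C_0$ and hence $d=0$, so $A(t_0)$ has full column rank; since rank is lower semicontinuous, $A(t)$ has full column rank for $t$ near $t_0$, so each such $F(t)$ is pointed and possesses vertices.

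First I would set up the contradiction. If the conclusion fails, then in every neighborhood of $t_0$ the union of solution sets is unbounded, so I can pick $t_n\to t_0$ and $x_n\in S(t_n)$ with $\|x_n\|\to\infty$. Because $F(t_n)$ is pointed with attained optimum, $x_n$ may be taken to be a basic optimal solution: there is a row index set $B$ with $A_B(t_n)$ invertible and $A_B(t_n)x_n=b_B(t_n)$. As there are finitely many choices of $B$, I pass to a subsequence with $B$ fixed and $d_n:=x_n/\|x_n\|\to d$, $\|d\|=1$. Dividing $A(t_n)x_n\le b(t_n)$ by $\|x_n\|$ and taking limits yields $A(t_0)d\le 0$, so $d\in C_0\setminus\{0\}$ and $c(t_0)\cdot d\le-\delta$; dividing $A_B(t_n)x_n=b_B(t_n)$ similarly yields $A_B(t_0)d=0$. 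In particular $v(t_n)=c(t_n)\cdot x_n=\|x_n\|\,(c(t_n)\cdot d_n)\to-\infty$.

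Next I would extract a limiting dual certificate. Optimality of the vertex $x_n$ provides multipliers $y_n\ge 0$ with $A_B(t_n)^{\top}y_n=c(t_n)$, and complementary slackness gives $v(t_n)=y_n^{\top}b_B(t_n)$. If $\{y_n\}$ has a bounded subsequence, then $y_n\to y^{*}\ge 0$ with $A_B(t_0)^{\top}y^{*}=c(t_0)$, whence $c(t_0)\cdot d=y^{*\top}(A_B(t_0)d)=0$, contradicting $c(t_0)\cdot d\le-\delta$. Otherwise $\|y_n\|\to\infty$; put $w_n=y_n/\|y_n\|\to w\ge 0$, $\|w\|=1$, with $A_B(t_0)^{\top}w=0$. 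Since $F(t_0)\neq\emptyset$ the subsystem $A_B(t_0)x\le b_B(t_0)$ is feasible, so Farkas's lemma rules out $w^{\top}b_B(t_0)<0$, giving $w^{\top}b_B(t_0)\ge 0$. But $v(t_n)=\|y_n\|\,(w_n^{\top}b_B(t_n))$ with $v(t_n)\to-\infty$ and $\|y_n\|\to\infty$ forces $w^{\top}b_B(t_0)\le 0$, hence $w^{\top}b_B(t_0)=0$. To convert this borderline equality into a strict contradiction I compare blow-up rates: from $x_n=A_B(t_n)^{-1}b_B(t_n)$ and $y_n=A_B(t_n)^{-\top}c(t_n)$, both scaling like $1/\det A_B(t_n)$, the ratio $\|x_n\|/\|y_n\|$ tends to a finite positive limit $L$, and equating $v(t_n)=\|x_n\|(c(t_n)\cdot d_n)=\|y_n\|(w_n^{\top}b_B(t_n))$ gives $w^{\top}b_B(t_0)=-L\,\delta'<0$ with $\delta'=-c(t_0)\cdot d\ge\delta$, contradicting $w^{\top}b_B(t_0)\ge 0$.

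The hard part is exactly this last rate-matching step. In the degenerate sub-cases where $\operatorname{adj}(A_B(t_0))\,b_B(t_0)$ or $\operatorname{adj}(A_B(t_0))^{\top}c(t_0)$ vanishes, the ratio $\|x_n\|/\|y_n\|$ may tend to $0$ or $\infty$ and the sign comparison becomes inconclusive; I expect to dispose of these by passing to a sub-basis (discarding an offending tight row and re-selecting an invertible square subsystem) or by a small feasibility-preserving perturbation of $b(t_0)$ that restores non-degeneracy, reducing to the generic case. The remaining ingredients, namely compactness on the unit sphere, continuity of the data, and LP duality together with Farkas's lemma, are routine.
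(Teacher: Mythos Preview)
First, note that the paper does not prove this theorem; it is quoted verbatim as Lemma~3.1 of Martin~\cite{martin-1975-continuity-maximum} and then applied as a black box in the proof of Lemma~\ref{lemma: S(B) full-dimensional}. So there is no in-paper argument to compare against, and what follows is an assessment of your attempt on its own terms.

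Your reduction is sound up to and including the choice of a fixed optimal basis~$B$, the limit direction $d\in C_0\setminus\{0\}$ with $A_B(t_0)d=0$, and the conclusion $c(t_0)\cdot d\le-\delta<0$. But the split into ``$\{y_n\}$ bounded'' versus ``$\{y_n\}$ unbounded'' is illusory: since $d\in\ker A_B(t_0)$ and $c(t_0)\cdot d\neq0$, we have $c(t_0)\notin(\ker A_B(t_0))^{\perp}=\operatorname{range}(A_B(t_0)^{\top})$, so the system $A_B(t_0)^{\top}y=c(t_0)$ is inconsistent and $y_n=(A_B(t_n)^{\top})^{-1}c(t_n)$ can never stay bounded. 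You are therefore always in the unbounded branch, and that branch never closes. In the rank-$(N{-}1)$ case, $\ker(A_B(t_0)^{\top})$ is spanned by your limit $w$, so the feasibility inequality $w^{\top}b_B(t_0)\ge0$ together with your rate identity $w^{\top}b_B(t_0)=L\cdot(c(t_0)\cdot d)$ forces $L=0$; equivalently, $w^{\top}b_B(t_0)=0$ is the same statement as $b_B(t_0)\in\operatorname{range}(A_B(t_0))=\ker(\operatorname{adj}A_B(t_0))$, i.e., $\operatorname{adj}(A_B(t_0))\,b_B(t_0)=0$. Thus the ``generic'' case $L\in(0,\infty)$ that would yield the strict inequality $w^{\top}b_B(t_0)<0$ simply does not occur, and your rate-matching equation collapses to $0=0$. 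When $\operatorname{rank}A_B(t_0)\le N-2$ the adjugate vanishes identically and your ratio formula carries no information at all.

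The proposed repairs do not help. Replacing $B$ by another optimal basis at $x_n$ runs into the same obstruction, because the inconsistency $c(t_0)\notin\operatorname{range}(A_B(t_0)^{\top})$ is a property of the limit data on any such basis. Perturbing $b(t_0)$ is not a legal move: the theorem is about the given continuous family $(A(t),b(t),c(t))$. What is genuinely missing is an argument that controls $v(t_n)$ from below (or, equivalently, produces a nearby feasible point for the perturbed primal), which is exactly the substantive step Martin's lemma supplies and which your dual-rate comparison does not recover.
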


\begin{proof}[Proof of Lemma~\ref{lemma: S(B) full-dimensional}]
  Consider the parametric linear program
  $$ \max \{\,0 \st a^i\cdot (x-f) \leq 1,\ i=1, \dots, n\,\}$$
  with parameters $t = A = (a^1;\dots;a^n) \in \R^{n\times m}$. 
  By the assumption of the lemma, the solution set for $t_0 = B = (b^1;\dots;b^n)$
  is bounded.  Let $\mathcal O$ be the open neighborhood of~$t_0$ from
  Theorem~\ref{thm: locally-bounded-solution-set},
  and let $\hat S$ be the union of all solution sets for $t\in \mathcal O$, which is by
  the theorem a bounded set.  
  
  For each of the finitely many lattice points $y \in \hat S \setminus \M(B)$, let 
  $i(y)\in\{1,\dots,n\}$ be an index of an inequality that cuts off~$y$, that
  is, $b^{i(y)}\cdot (y - f) > 1$.   
  Then $$\mathcal O' = \{\, A=(a^1;\dots;a^n) \in \mathcal O \st a^{i(y)}\cdot (y - f) > 1 \text{ for all $y \in
    \hat S \setminus \M(B)$} \,\}$$
  is an open set containing~$B=(b^1;\dots;b^n)$.
  For $A=(a^1;\dots;a^n) \in \mathcal O'$ we have $Y(A) \subseteq Y(B)$, and thus 
  $\mathcal O'$ is the desired open neighborhood of~$B$ contained in~$\S(B)$.
\end{proof}

\begin{observation}\label{obs:dimension}
Suppose $\dim\T(B, \Y) \geq 1$.  By virtue of Lemma~\ref{lemma: S(B)
  full-dimensional}, for any $\bar A\in\mathcal N(B,\mathcal Y)$, there exists $0 < \delta < 1$ 
such that both $B \pm \epsilon \bar A \in \T(B, \Y) \cap \S(B)$ for all $0 < \epsilon \leq \delta$.
\end{observation}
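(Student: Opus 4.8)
The plan is to combine the two structural facts already established: that $\T(B,\Y)$ is an open subset of the affine space $B + \mathcal N(B,\Y)$ (in the relative topology), and that $\S(B)$ contains a full-dimensional open neighborhood of $B$ in $\R^{n\times m}$ by Lemma~\ref{lemma: S(B) full-dimensional}. Fix $\bar A \in \mathcal N(B,\Y)$. First I would recall why $B \pm \epsilon \bar A$ stays in the affine hull: since $\bar A$ lies in the null space $\mathcal N(B,\Y)$ of the defining equations \eqref{eq:tilting-1} and \eqref{eq:tilting-2}, the matrix $B \pm \epsilon \bar A$ satisfies all those equations for every $\epsilon$, because $B$ itself does. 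So the only thing that can fail for small perturbations along $\pm\bar A$ is the strict inequality \eqref{eq:tilting-3} (membership in $\T(B,\Y)$) and the lattice-free-preserving condition $Y(B \pm \epsilon \bar A) \subseteq Y(B)$ (membership in $\S(B)$).

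Next I would handle each of these two conditions by a continuity/openness argument and extract a threshold. For \eqref{eq:tilting-3}: the finitely many strict inequalities $a^{i}\cdot r^j > a^{i'}\cdot r^j$ for $i \in I_B(r^j)$, $i' \notin I_B(r^j)$ are satisfied strictly at $A = B$ (indeed $B \in \T(B,\Y)$, which is noted in the text), and the left- and right-hand sides are linear — hence continuous — in the entries of $A$. Therefore there is a $\delta_1 > 0$ such that all of them remain strict for $B \pm \epsilon \bar A$ whenever $0 < \epsilon \le \delta_1$; equivalently, $B + \mathcal B(B,\delta_1\|\bar A\|) \cap \{B + \mathcal N(B,\Y)\} \subseteq \T(B,\Y)$, or more simply one just chooses $\delta_1$ small enough that the (finitely many, and each nonzero at $\epsilon=0$) affine functions of $\epsilon$ do not change sign on $[-\delta_1,\delta_1]$. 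For $\S(B)$: by Lemma~\ref{lemma: S(B) full-dimensional} there is an open ball $\mathcal B(B,\rho) \subseteq \S(B)$ in $\R^{n\times m}$; taking $\delta_2 = \rho / (\|\bar A\| + 1)$ guarantees $B \pm \epsilon \bar A \in \mathcal B(B,\rho) \subseteq \S(B)$ for $0 < \epsilon \le \delta_2$. Setting $\delta = \min\{\delta_1, \delta_2, \tfrac12\} < 1$ then yields $B \pm \epsilon \bar A \in \T(B,\Y) \cap \S(B)$ for all $0 < \epsilon \le \delta$, as claimed.

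I do not expect any real obstacle here — the statement is essentially a packaging of openness of $\T(B,\Y)$ in its affine hull together with Lemma~\ref{lemma: S(B) full-dimensional}. The one point that deserves a sentence of care is making explicit that $\bar A \in \mathcal N(B,\Y)$ keeps the equality constraints \eqref{eq:tilting-1}--\eqref{eq:tilting-2} satisfied exactly (so those never obstruct), so that genuinely only the open conditions need a positive threshold; and that $\T(B,\Y) \cap \S(B)$ is an intersection of two relatively/absolutely open conditions around $B$, hence the common threshold $\delta$ exists by taking a minimum over finitely many constraints.
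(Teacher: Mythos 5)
Your proposal is correct and follows the same reasoning the paper intends: the paper states this as an observation without a written proof, relying precisely on the facts you use, namely that $\bar A\in\mathcal N(B,\Y)$ preserves the equations \eqref{eq:tilting-1}--\eqref{eq:tilting-2} exactly, that the finitely many strict inequalities \eqref{eq:tilting-3} hold at $B$ and are linear (hence survive small $\epsilon$), and that Lemma~\ref{lemma: S(B) full-dimensional} supplies a full-dimensional open neighborhood of $B$ inside $\S(B)$, so a common threshold $\delta<1$ exists by taking a minimum.
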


\begin{observation}\label{obs:latticefree}
If $\Y  = (Y_1, \dots, Y_n)$ is a covering of $Y(B)$, then $\M(A)$ is lattice-free for every $A \in \T(B, \Y) \cap \S(B)$.
\end{observation}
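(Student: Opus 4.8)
The plan is a short argument by contradiction that uses nothing beyond the definitions of $\S(B)$, $\T(B,\Y)$ and the covering hypothesis. Fix $A = (a^1;\dots;a^n) \in \T(B,\Y) \cap \S(B)$ and suppose, for contradiction, that $\M(A)$ is \emph{not} lattice-free, i.e., there is an integer point $z \in \intr(\M(A)) \cap \Z^m$.

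First I would note that $\intr(\M(A)) \subseteq \M(A)$, so $z \in \M(A)\cap\Z^m = Y(A)$; and since $A \in \S(B)$ we have $Y(A) \subseteq Y(B)$, hence $z \in Y(B)$. Because $\Y$ is a covering of $Y(B)$, i.e., $\bigcup_{i=1}^n Y_i = Y(B)$, there is an index $i$ with $z \in Y_i$. Constraint~\eqref{eq:tilting-1} for $A \in \T(B,\Y)$ then gives $a^i\cdot(z-f) = 1$, so $z$ satisfies the $i$-th defining inequality of $\M(A)$ with equality.

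On the other hand, for any polyhedron $\M(A) = \{\,x \st a^i\cdot(x-f)\le 1,\ i=1,\dots,n\,\}$ described by finitely many inequalities one has $\intr(\M(A)) = \{\,x \st a^i\cdot(x-f) < 1 \text{ for all } i\,\}$ (a point making one inequality tight can be pushed out of $\M(A)$ along the corresponding $a^i$, and conversely finitely many strict inequalities cut out an open set). Hence $z \in \intr(\M(A))$ forces $a^i\cdot(z-f) < 1$, contradicting the previous paragraph. Therefore $\intr(\M(A))\cap\Z^m = \emptyset$, which is exactly the assertion that $\M(A)$ is lattice-free.

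I do not expect a real obstacle here: the statement is essentially a restatement of the bookkeeping encoded in the definitions — $\S(B)$ forbids integer points outside $\M(B)$ from appearing, while~\eqref{eq:tilting-1} keeps every integer point of the covering pinned to the boundary of $\M(A)$. The only step needing a line of justification is the elementary description of $\intr(\M(A))$, and this is where one uses that $\M(A)$ is cut out by exactly the $n$ inequalities with normals $a^1,\dots,a^n$. Note that this particular observation does not even require $\M(B)$ to be maximal lattice-free — only that $\Y$ is a covering of $Y(B)$; the maximal lattice-free hypothesis enters earlier, in Lemma~\ref{lemma: S(B) full-dimensional}, to guarantee that $\S(B)\cap\T(B,\Y)$ actually contains genuine perturbations of $B$.
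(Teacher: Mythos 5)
Your proof is correct, and since the paper states this as an observation with no proof, your argument is precisely the intended unwinding of the definitions: $A\in\S(B)$ forces any interior integer point $z$ of $\M(A)$ into $Y(B)$, the covering places $z$ in some $Y_i$, and~\eqref{eq:tilting-1} pins it to the hyperplane $a^i\cdot(x-f)=1$, which is incompatible with $z\in\intr(\M(A))$ because the tight normal $a^i$ is nonzero (as $a^i\cdot(z-f)=1\neq 0$) and pushing along it leaves $\M(A)$. Nothing further is needed.
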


Observation \ref{obs:dimension} and \ref{obs:latticefree} are very useful
because when we can ensure that $\Y  = (Y_1, \dots, Y_n)$ is a covering of
$Y(B)$, we no longer have to worry about finding explicit lattice-free convex
sets. Rather, we can concentrate on simply showing that $\dim \T(B, \Y) \geq
1$ and that there exist matrices in that space such that there is a change in
the coefficient of at least one of the rays.

%Therefore, whenever $\dim \T(B) \geq 1$, there exist two lattice-free polytopes `near by' that form $B$ as a convex combination.

%\begin{observation}
%Rays pointing to integer points on the interior of facets of $B$ do not effect the tilting space.
%\end{observation}

\smallbreak

\begin{figure}
\centering

\scalebox{0.85}{%
\ifpdf
\input{figures/figureSimpleTilts.pdftex_t}% \input{#1.pdf_t}
\else
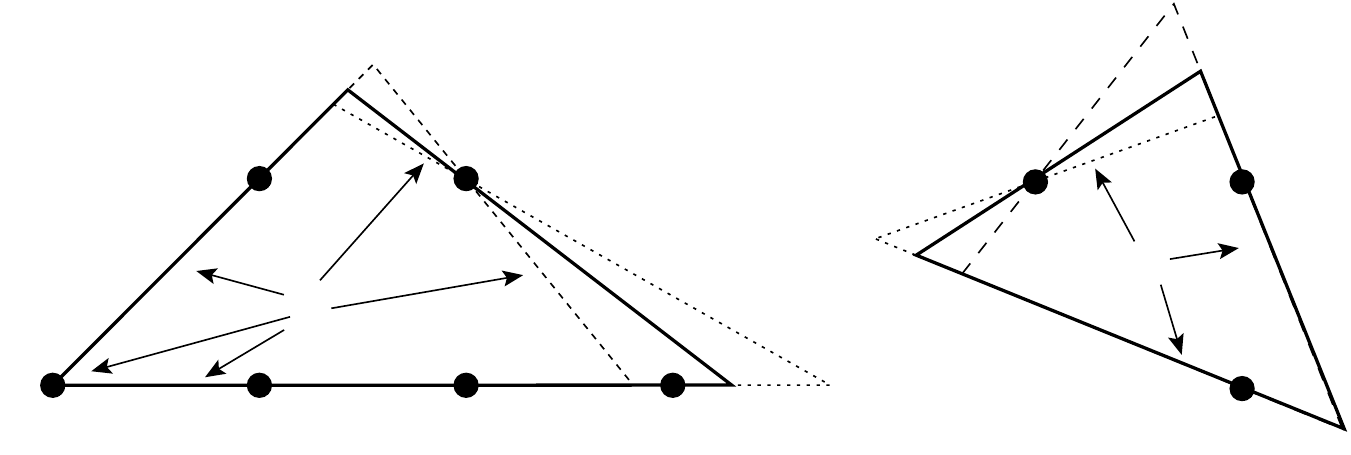
\fi}
\caption{Simple tilts: Tilting one facet of a polytope to generate new inequalities.  In
  both examples, there is a ray pointing to a non-integer point on the interior
  of the facet being tilted.  This ensures that the inequalities from the
  tilted sets are distinct, and therefore we see that the original inequality
  $\sum_{j=1}^k\psi_B(r^j)s_j\geq 1$ is not extreme because it is the strict convex combination of two
  other inequalities.  This is the assertion of
  Lemma~\ref{lemma:simple_tilts}.} 
\label{figure:simple}
\end{figure}
A simple application of this principle is to tilt one facet of a polytope to
show that the corresponding inequality is not extreme, as shown in Figure
\ref{figure:simple}.  This is summarized in the following lemma. 
\begin{lemma}[Simple tilts]\label{lemma:simple_tilts}
Let $m\geq 2$. Let $\M(B)$ be a maximal lattice-free polytope for some matrix $B\in \R^{n\times m}$. Let $F_1$ be a facet of $\M(B)$ such that $\relint(F_1) \cap \Z^m = \{y^1\}$ and $P \cap F_1 \subset \relint(F_1)$, i.e., there are no ray intersections on the lower-dimensional faces of $F_1$.  If $\relint(F_1) \cap P \setminus \Z^m \neq \emptyset$, then $\sum_{j=1}^k\psi_B(r^j)s_j\geq 1$ is not extreme.  
\end{lemma}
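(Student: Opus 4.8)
The plan is to apply the tilting framework of this section with a tuple $\Y$ that fixes every facet except~$F_1$. Concretely, I would set $Y_1 = \{y^1\}$ and $Y_i = Y(B)\cap F_i$ for $i = 2,\dots,n$. Since $\M(B)$ is lattice-free, every $y\in Y(B)$ lies on $\bd\M(B)$, hence on some facet; if $y$ lies only on $F_1$ then $y\in\relint(F_1)\cap\Z^m = \{y^1\}$, and otherwise $y$ lies on some $F_i$ with $i\geq 2$. Thus $\Y$ is a covering of $Y(B)$, so Observation~\ref{obs:latticefree} guarantees that $\M(A)$ is lattice-free for every $A\in\T(B,\Y)\cap\S(B)$. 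It therefore suffices to produce $A,C\in\T(B,\Y)\cap\S(B)$ with $B = \frac12 A + \frac12 C$ and $\psi_A(r^\ell)\neq\psi_C(r^\ell)$ for some~$\ell$: then $\M(A)$ and $\M(C)$ yield valid inequalities by Theorem~\ref{thm:int_cuts} (they are closed convex, lattice-free, bounded, and for a small enough perturbation still contain $f$ in their interior), these inequalities are distinct, and by Lemma~\ref{lemma:convex_combination} the inequality $\sum_j\psi_B(r^j)s_j\geq 1$ is their strict convex combination, hence not extreme.

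To build the tilt, I would perturb only the first row of~$B$, i.e.\ consider directions $\bar A = (\bar a^1; 0;\dots;0)\in\R^{n\times m}$. For such~$\bar A$, the homogeneous form of \eqref{eq:tilting-1} collapses to the single equation $\bar a^1\cdot(y^1-f) = 0$, and the homogeneous form of \eqref{eq:tilting-2} is automatic: the hypothesis $P\cap F_1\subset\relint(F_1)$ says that no ray intersection lies on a lower-dimensional face of~$F_1$, so $I_B(r^j) = \{1\}$ whenever $1\in I_B(r^j)$, while for rays with $I_B(r^j)\subseteq\{2,\dots,n\}$ the relevant rows of~$\bar A$ vanish. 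Hence every $\bar a^1$ in the hyperplane $N := \{\,v\in\R^m\st v\cdot(y^1-f) = 0\,\}$ gives $\bar A\in\mathcal N(B,\Y)$; note $y^1\neq f$ since $f\in\intr\M(B)$ while $y^1$ is integral, so $\dim N = m-1\geq 1$, and in particular $\dim\T(B,\Y)\geq 1$.

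The key point is to choose $\bar a^1\in N$ so that the tilt actually displaces a ray intersection. Pick a ray $r^j$ with $p^j\in\relint(F_1)\setminus\Z^m$, which exists by hypothesis. Since $p^j\notin\Z^m$ and $y^1\in\Z^m$, the points $p^j$ and $y^1$ are distinct points of $\relint(F_1)$, so the line through them lies in $\aff(F_1)$, which does not contain the interior point~$f$. Therefore $f$, $p^j$, $y^1$ are not collinear, and since $r^j$ is a positive multiple of $p^j-f$, the vector $r^j$ is not parallel to $y^1-f$; equivalently, the linear functional $v\mapsto v\cdot r^j$ does not vanish identically on~$N$. I would then pick $\bar a^1\in N$ with $\bar a^1\cdot r^j\neq 0$ and set $\bar A = (\bar a^1; 0;\dots;0)\in\mathcal N(B,\Y)$.

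Finally, by Observation~\ref{obs:dimension} (shrinking the step further if necessary so that $\M(B\pm\epsilon\bar A)$ stays bounded with $f$ in its interior, both open conditions), there is $\delta\in(0,1)$ with $A := B-\delta\bar A$ and $C := B+\delta\bar A$ in $\T(B,\Y)\cap\S(B)$. Then $B = \frac12 A + \frac12 C$, $\M(A)$ and $\M(C)$ are lattice-free by Observation~\ref{obs:latticefree}, and $I_A(r^j) = I_C(r^j) = I_B(r^j) = \{1\}$, so $\psi_A(r^j) = b^1\cdot r^j - \delta\,\bar a^1\cdot r^j \neq b^1\cdot r^j + \delta\,\bar a^1\cdot r^j = \psi_C(r^j)$. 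The reduction of the first paragraph then shows that $\sum_j\psi_B(r^j)s_j\geq 1$ is not extreme. I expect the main obstacle to be precisely the geometric claim of the third paragraph — that the tilt of~$F_1$ can be chosen to move the non-integral ray intersection $p^j$ — which is where the hypotheses genuinely enter (via $f\notin\aff(F_1)$ and via $P\cap F_1\subset\relint(F_1)$); the verification that the fixed facets impose no constraints on~$\bar a^1$ is routine once one knows no ray intersection sits on a lower-dimensional face of~$F_1$.
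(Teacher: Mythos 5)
Your proposal is correct and follows essentially the same route as the paper's proof: the same covering $\Y=(\{y^1\},Y(B)\cap F_2,\dots,Y(B)\cap F_n)$, a direction in $\mathcal N(B,\Y)$ with $\bar a^1\cdot r^j\neq 0$ obtained from the linear independence of $r^j$ and $y^1-f$, followed by Observations~\ref{obs:dimension} and~\ref{obs:latticefree} and Lemma~\ref{lemma:convex_combination}. Restricting to perturbations of the first row only is a harmless specialization of the paper's choice of $\bar A$, and your extra remarks on boundedness and $f\in\intr\M(B\pm\epsilon\bar A)$ are fine but not needed beyond what $\S(B)$ and the definition of $\M(\cdot)$ already give.
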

\begin{proof}
Let $F_1,\dots,F_n$ be the facets of $M(B)$. Let $Y_1 = \{y^1\}$ and $Y_i=Y(B)\cap F_i$, $i = 2, \ldots, n$, so 
that $\Y=(Y_1,\dots,Y_n)$ is a covering of the set $Y(B)$ of integer points in~$\M(B)$.  

Let us analyze $\dim \T(B, \Y)$. Since $P \cap F_1 \subset \relint(F_1)$, there are no equalities in $\T(B, \Y)$ corresponding to some $I_B(r^j)$ which involve $a^1$. Moreover, $Y_1$ is a singleton set consisting of $y^1$. Hence, there is only one equation in $\T(B, \Y)$ which involves $a^1$, and that is $a^1 \cdot (y^1-f) = 1$. This implies that $\dim\T(B,\Y) \geq m-1 \geq 1$ for $m \geq 2$. We will now select a particular element in $\mathcal{N}(B, \Y)\setminus\{0\}$.

By the hypothesis, there exists $j\in \{1, \ldots, k\}$ such that $p^j \in (\relint(F_1) \cap P) \setminus \Z^m$. Since $\relint(F_1)\cap\Z^m = \{y^1\}$, this implies $r^j$ and $y^1-f$ are linearly independent.
Since $a^1 \cdot (y^1-f) = 0$ is the only equation involving $a^1$ in
$\mathcal{N}(B, \Y)$, and $y^1-f$ and $r^j$ are linearly independent, $\mathcal
N(B, \Y) \cap \{\,(a^1; \ldots ; a^n) \st a^1\cdot r^j = 0 \,\} \subsetneq
\mathcal{N}(B, \Y)$. Pick any $\bar A \in \mathcal N(B, \Y) \setminus
\{\,(a^1; \ldots ; a^n)\st a^1\cdot r^j = 0 \,\}$.

By
Observation~\ref{obs:dimension},  there exists an $\epsilon > 0$  such that
both $B \pm \epsilon \bar A \in \T(B, \Y) \cap \S(B)$. By our choice of $\Y$,
the hypothesis of Observation~\ref{obs:latticefree} is satisfied and therefore
$\M(B \pm \epsilon \bar A)$ are both lattice-free. Moreover, since $\bar A \not\in
\{\,(a^1; \ldots ; a^n) \st a^1\cdot r^j = 0 \,\}$, we have $\bar a^1\cdot r^j
\neq 0$. Therefore, $\psi_{B + \epsilon\bar A}(r^j) = (b^1  +\epsilon \bar a^1)\cdot r^j \neq (b^1  - \epsilon \bar a^1)\cdot r^j = \psi_{B - \epsilon\bar A}(r^j)$; the equalities follow from the fact that $B \pm \epsilon A \in \T(B,\Y)$ and so $I_{B+\epsilon\bar A}(r^j) = I_{B-\epsilon\bar A}(r^j) = I_B(r^j) = \{1\}$. Moreover, since $B = \frac{1}{2}(B + \epsilon \bar A) + \frac{1}{2}(B - \epsilon \bar A)$, one can now apply Lemma~\ref{lemma:convex_combination} to
show that the inequality from $M(B)$ is a convex combination of the
two different valid inequalities coming from $M(B\pm \epsilon \bar A)$. 
\end{proof}
In the next section, we will use this lemma and more complicated applications
of the tilting space.

\section{New Necessary Conditions for \boldmath$m=2$}\label{sec:nec_cond}

In this section, we prove necessary conditions for $\sum_{j=1}^k\psi_B(r^j)s_j\geq 1$ to be an extreme
inequality for any matrix $B$ such that $\M(B)$ is a maximal lattice-free set
in $\R^2$.  These conditions can also be shown using the complete
characterization of facets for $m=2$ in \cite{cm}. Our proofs primarily use
geometrically motivated tilting arguments which illuminate why certain
inequalities are not extreme.  

We find only three cases when a non-extreme
inequality is a convex combination of inequalities derived from convex sets of
a different combinatorial type: splits can be convex combinations of two
Type~2 triangle inequalities; Type~2 triangles can, in some instances, be
convex combinations of a Type~3 triangle and a quadrilateral inequality; and
in some other cases, Type~2 inequalities can be convex combinations of two
quadrilaterals.  In Section~\ref{sec:poly_facets}, we will use these
conditions to show that there are only polynomially many extreme inequalities
for $\conv (\Rf)$.

\paragraph{Notation.}  
% We will use the symbols $r,y,p,F$ to denote rays, integer points, ray
% intersections, and facets of $\M(B)$, respectively.  %%--already said that
The integer points will typically be labeled such that $y^1 \in \relint(F_1), y^2 \in \relint(F_2)$.  The closed line segment between two points $x^1$ and $x^2$ will be denoted by $[x^1, x^2]$, and the open line segment will be denoted by $(x^1, x^2)$.
Within the case analysis of some of the proofs, we will refer to certain
points lying within splits.  For convenience, for $i=1,2,3$, we define $S_i$ as the split such that one facet of $S_i$ contains~$F_i$ and $S_i\cap \intr(\M(B)) \neq \emptyset$. For any facet $F_i$, we will need to consider the sub-lattice of~$\Z^2$ contained in the linear space parallel to $F_i$. We use the notation $v(F_i)$ to denote the primitive lattice vector which generates this one-dimensional lattice. \bigbreak

We begin with a lemma regarding corner rays for triangles and quadrilaterals in $\R^2$.

\begin{lemma}
\label{lemma:corner_rays_almost}
Let $B \in \R^{n \times 2}$ be such that $M(B)$ is a triangle ($n=3$) or a
quadrilateral ($n=4$). Let $Y_i = \{y^i\}$, for any $y^i \in \relint(F_i) \cap
\Z^2$.  If $P \not\subset \Z^2$ and $M(B)$ has fewer than $n$ corner rays,
then there exists $\bar A \in \mathcal N(B, \Y) \setminus\{0\}$ such that for all $0 < \epsilon < 1$
$\psi_{B+\epsilon \bar A}(r^j) \neq \psi_{B-\epsilon \bar A}(r^j)$ for some $j = 1, \dots, k$ and $\psi_B(r^j)
= \frac{1}{2} \psi_{B-\epsilon \bar A}(r^j) +\frac{1}{2} \psi_{B+\epsilon \bar A}(r^j)$ for all $j = 1, \ldots, k$. 
\end{lemma}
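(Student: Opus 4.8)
The overall strategy is to produce the desired direction $\bar A \in \mathcal N(B,\Y)$ by a dimension count on the tilting space $\T(B,\Y)$, exactly in the spirit of Lemma~\ref{lemma:simple_tilts}, and then to choose $\bar A$ so that at least one ray coefficient actually changes. First I would set $Y_i = \{y^i\}$ as prescribed, so that $\Y = (Y_1,\dots,Y_n)$ is a covering of $Y(B)$ (since $M(B)$ is a triangle or quadrilateral of the Lov\'asz types, each facet carries at least one integer point in its relative interior, and we simply pick one). Then Observation~\ref{obs:latticefree} applies once we know $B \pm \epsilon\bar A \in \T(B,\Y)\cap\S(B)$, so lattice-freeness will come for free via Observation~\ref{obs:dimension}; the real work is the dimension count and the choice of a good $\bar A$.

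For the dimension count: the equations defining $\mathcal N(B,\Y)$ are \eqref{eq:tilting-1} (one equation $a^i\cdot(y^i-f)=0$ per facet, since each $Y_i$ is a singleton) and \eqref{eq:tilting-2} (one equation $a^i\cdot r^j = a^{i'}\cdot r^j$ per corner ray $r^j$, which links the two facets $i,i'\in I_B(r^j)$). If $M(B)$ has $c$ corner rays with $c < n$, then we have $n$ equations of type \eqref{eq:tilting-1} and $c$ equations of type \eqref{eq:tilting-2}, and these act on the $nm = 2n$-dimensional space of matrices $A$. A careful argument (the corner-ray constraints are independent because each ties together a distinct adjacent pair of facets, and in a triangle/quadrilateral the "corner-ray graph" on the $n$ facets has at most $c$ edges) shows $\dim\mathcal N(B,\Y) \geq 2n - n - c = n - c \geq 1$. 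So there is a nonzero $\bar A \in \mathcal N(B,\Y)$.

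It remains to pick $\bar A$ so that $\bar a^j\cdot r^j \neq 0$ for some ray, which forces $\psi_{B+\epsilon\bar A}(r^j)\neq\psi_{B-\epsilon\bar A}(r^j)$ (using $I_{B\pm\epsilon\bar A}(r^j)=I_B(r^j)$, which holds on the tilting space, just as in Lemma~\ref{lemma:simple_tilts}). Here I use the hypothesis $P \not\subset \Z^2$: there is a ray $r^j$ with $p^j = f + \frac{1}{\psi_B(r^j)}r^j \notin \Z^2$. Let $i\in I_B(r^j)$; since $p^j$ lies on facet $F_i$ but is not the integer point $y^i$, the vectors $r^j$ and $y^i - f$ are linearly independent, so the hyperplane $\{A : a^i\cdot r^j = 0\}$ does not contain all of $\mathcal N(B,\Y)$ (which only constrains $a^i$ via $a^i\cdot(y^i-f)=0$ and possibly corner-ray equations; one needs to check these leave a direction with $a^i\cdot r^j\neq 0$ — this is where a little case care for whether $r^j$ is itself a corner ray is needed). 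Picking $\bar A$ outside that hyperplane gives $\bar a^i\cdot r^j\neq 0$. Finally, $B = \frac12(B+\epsilon\bar A) + \frac12(B-\epsilon\bar A)$ and Lemma~\ref{lemma:convex_combination} give $\psi_B(r^j) = \frac12\psi_{B-\epsilon\bar A}(r^j)+\frac12\psi_{B+\epsilon\bar A}(r^j)$ for every $j$, as claimed; note the conclusion asks only for $\bar A\in\mathcal N(B,\Y)\setminus\{0\}$ with these two properties, not that $M(B\pm\epsilon\bar A)$ be maximal, so we don't even need the full force of Observation~\ref{obs:latticefree} for a valid statement (though lattice-freeness is what makes it useful downstream).

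The main obstacle I anticipate is making the inequality $\dim\mathcal N(B,\Y)\geq n-c$ airtight: one must verify that the $c$ equations of type \eqref{eq:tilting-2} are genuinely independent of each other and of the $n$ equations \eqref{eq:tilting-1}, which relies on the combinatorial fact that in a lattice-free triangle or quadrilateral each corner ray is pinned to a distinct pair of facets and the $r^j$ directions involved are in "general enough" position (no ray equals $y^i - f$ up to scaling, a consequence of $p^j\notin\Z^2$ at that vertex, or can be argued directly from the geometry). A secondary subtlety is the dovetailing with the choice of $\bar A$: one needs the direction that makes $a^i\cdot r^j\neq 0$ to still lie in the $(n-c)$-dimensional null space, which requires checking the relevant constraint hyperplane is proper — straightforward when $n - c \geq 2$, and requiring the linear-independence observation above when $n - c = 1$.
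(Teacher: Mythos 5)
Your setup (dimension count on $\mathcal N(B,\Y)$, then pick $\bar A$ with $\bar a^i\cdot r^j\neq 0$ for a ray with $p^j\notin\Z^2$, then apply Lemma~\ref{lemma:convex_combination}) follows the paper's strategy, and the dimension count itself is fine --- in fact you do not need any independence of the equations for the lower bound $\dim\mathcal N(B,\Y)\ge 2n-(n+c)\ge 1$, so that worry is moot. The genuine gap is exactly at the step you flag but do not carry out: you must show that $\mathcal N(B,\Y)$ is \emph{not} contained in the hyperplane $\{A : a^i\cdot r^j=0\}$, where $F_i$ is the facet hit by the non-integer ray intersection. Linear independence of $r^j$ and $y^i-f$ is not enough for this: the components of $A$ are coupled through the corner-ray equations \eqref{eq:tilting-2}, and a priori every element of $\mathcal N(B,\Y)$ could have $\bar a^i=0$ (the facet $F_i$ ``frozen''), in which case $\bar a^i\cdot r^j=0$ holds for every choice of $\bar A$, regardless of that independence. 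This danger is most acute precisely in the tight case $c=n-1$, where $\mathcal N(B,\Y)$ is typically one-dimensional and there is no freedom to ``avoid'' a hyperplane; and your assertion that $n-c\ge 2$ makes the avoidance straightforward is also unsubstantiated, since a higher-dimensional $\mathcal N(B,\Y)$ can still lie inside a single coordinate-type hyperplane unless you rule out the degenerate projection.

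The missing idea, which is the heart of the paper's proof, is a propagation (chain) argument. One reduces to the worst case of exactly $n-1$ corner rays (its null space is a subspace of the null spaces with fewer corner-ray equations, so any $\bar A$ found there works), labels the corner rays $r^i$ pointing to $F_i\cap F_{i+1}$, and observes: if $\bar a^i=0$ for some $i$, then the equation $\bar a^{i+1}\cdot r^i=\bar a^i\cdot r^i=0$ together with $\bar a^{i+1}\cdot(y^{i+1}-f)=0$ forces $\bar a^{i+1}=0$, because $r^i$ and $y^{i+1}-f$ are linearly independent (the ray points to a vertex, the integer point to the relative interior); the symmetric step runs backwards along the chain. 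By induction any vanishing component forces $\bar A=0$, so every nonzero $\bar A\in\mathcal N(B,\Y)$ has \emph{all} $\bar a^i\neq 0$. Then, since we are in $\R^2$ and $\bar a^i\cdot(y^i-f)=0$, the relation $\bar a^i\cdot r^j=0$ would force $r^j$ parallel to $y^i-f$, contradicting $p^j\in F_i\setminus\Z^2$; hence $\bar a^i\cdot r^j\neq 0$ automatically, with no need to choose $\bar A$ outside a hyperplane. Without this (or an equivalent) argument your proof does not go through; with it, the rest of your outline (scaling $\bar A$ so $B\pm\epsilon\bar A\in\T(B,\Y)$ for all $0<\epsilon<1$, then Lemma~\ref{lemma:convex_combination}) is correct. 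A small additional remark: your appeal to $\Y$ covering $Y(B)$ and to lattice-freeness is beside the point here --- the lemma deliberately does not assume $M(B)$ is lattice-free, as the paper notes after its statement, and as you yourself observe the conclusion needs neither $\S(B)$ nor Observation~\ref{obs:latticefree}.
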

\begin{proof}
We examine the tilting space of $B$ with at most $n-1$ corner rays.  We only
need to examine the tilting space of exactly $n-1$ corner rays, as it is a
subspace of the other cases.  With $n-1$ corner rays, $\T(B,\Y)$ is the set of
matrices $A=(a^1;\dots;a^n)$ satisfying the following system of equations,
where, for convenience, we define $\y^i:= y^i - f$:
\begin{equation*}
a^i \cdot \y^i = 1  \ \text{for} \ i=1,\dots,n  \hspace{.5cm} \text{ and } \hspace{.5cm}    a^i \cdot r^i = a^{i+1} \cdot r^i \ \text{for} \  i = 1,\dots, n-1,
\end{equation*}
and a number of strict inequalities, which we do not list here.

We have assumed, without loss of generality, that the rays and facets are
numbered such that we have corner rays $r^i \in F_i \cap F_{i+1}$ for
$i=1,\dots, n-1$, 
so the remaining ray~$r^n$ is not a corner ray.
As usual,
$y^i \in F_i\cap \Z^2$ for $i=1,\dots, n$.  Note that $\bar y^i$ is linearly
independent from $r^i$ for $i=1, \dots, n-1$ and linearly independent from
$r^{i-1}$ for $i=2, \dots, n$, because $y^i$ lies in the relative interior of $F_i$ and the rays point to the vertices.   

We now study the linear subspace $\mathcal N(B,\Y)$ that lies parallel to the affine hull of
$\T(B,\Y)$, so that $\mathcal N(B,\Y)$ is described by the homogeneous equations  
\begin{equation}\label{eq:N(B,Y)}
a^i \cdot \y^i = 0  \ \text{for} \ i=1,\dots,n  \hspace{.5cm} \text{ and } \hspace{.5cm}    a^i \cdot r^i = a^{i+1} \cdot r^i \ \text{for} \  i = 1,\dots, n-1.
\end{equation}
There are $2n-1$ equations and $2n$ variables, so $\dim  \mathcal N(B,\Y) \geq 1$. Moreover, observe that $B$ satisfies all the strict inequalties of $\T(B,\Y)$ and therefore, we can choose $\bar A = (\a^1; \dots ; \a^n) \in \mathcal N(B,\Y)\setminus\{0\}$ such that $B\pm \epsilon\bar A \in \T(B,\Y)$ for all $0 < \epsilon < 1$.

Notice that for $i=1,\dots, n-1$, if $\a^i = 0$, then $\a^{i+1}$ must satisfy $\a^{i+1} \cdot r^i =0$ and $\a^{i+1} \cdot \y^{i+1} = 0$, which implies that $\a^{i+1} = 0$, since $\bar y^{i+1}$ and $r^i$ are linearly independent.  
Similarly, for $i=2,\dots, n$, if $\a^i = 0$, then $\a^{i-1}$ must satisfy $\a^{i-1} \cdot r^{i-1} = 0$ and $\a^{i-1} \cdot \y^{i-1} = 0$, which implies that $\a^{i-1} = 0$.  
By induction, this shows that if $\a^i = 0$ for any $i=1,\dots, n$, then $\bar A = 0$, which contradicts our assumption.  Hence, $a^i \neq 0$ for $i=1,\dots, n$.

Now suppose the ray $r \in \{r^1, \dots, r^k\}$ points to $F_i\setminus \Z^2$ for some $i \in \{1, \dots, n\}$.  
This ray must exist by the assumption that $P \not\subset \Z^2$.  If $r$ is
parallel to~$\y^i$, then it either points to~$y^i$ from~$f$, or it does not point to~$F_i$.  
Since we assumed that $r$ points to $F_i \setminus \Z^2$, neither of these is possible, so $r$ is not parallel to $\y^i$.  
Now since $\a^i \cdot \y^i = 0$ and neither is the zero vector, $\y^i$ and
$\a^i$ are linearly independent and thus span $\R^2$.  Pick $\alpha, \beta$ such that $r = \alpha \y^i + \beta \a^i$.  
Then $\a^i \cdot r = \a^i\cdot (\alpha \y^i + \beta \a^i)  = \beta \Vert\a^i\Vert^2_2$.  
Note $\beta\neq 0$ since $r$~is not parallel to~$\y^i$. 
Since $B\pm \epsilon \bar A \in \T(B,\Y)$ for every $0 < \epsilon < 1$, $I_{B + \epsilon \bar A}(r) = I_B(r) = I_{B - \epsilon \bar A}(r)$. Therefore, $\psi_{B + \epsilon\bar{A}}(r) = (b^i + \epsilon \bar a^i)\cdot r \neq (b^i - \epsilon\bar a^i)\cdot r = \psi_{B - \epsilon\bar{A}}(r)$.  
Since $B = \frac{1}{2}(B+\epsilon \bar A) + \frac{1}{2}(B-\epsilon \bar A)$, applying Lemma \ref{lemma:convex_combination} finishes the result.
\end{proof}

We comment here that in the statement of Lemma~\ref{lemma:corner_rays_almost}, we do not insist that $M(B)$ is a lattice-free convex set. Therefore, the statement does not mention anything about valid or extreme inequalities for $\conv(\Rf)$. This generality will be needed in our results in the coming subsections.

\subsection{Type 3 triangles and quadrilaterals}

For this section on Type 3 triangles and quadrilaterals, we will be using a specific $\Y = (Y_1, \ldots, Y_n)$ where $Y_i$ will consist of the unique integer point in the relative interior of facet~$F_i$. This would mean that $\Y = (Y_1, \ldots, Y_n)$ is a covering of $Y(B)$ for Type 3 triangles and quadrilaterals. We will now apply Lemma~\ref{lemma:corner_rays_almost} to matrices $B$ such that $M(B)$ is a maximal lattice-free set that is either a Type 3 triangle or a quadrilateral.

\begin{corollary}
\label{lemma:corner_rays}
Suppose that $\M(B)$ has $n$ facets and is  a maximal lattice-free set that is either a Type 3 triangle $(n=3)$ or a quadrilateral $(n=4)$, and that $P \not\subset \Z^2$.  If $M(B)$ has fewer than $n$ corner rays, then $\sum_{j=1}^k\psi_B(r^j)s_j\geq 1$ is not extreme. 
\end{corollary}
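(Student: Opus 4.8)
The plan is to deduce Corollary~\ref{lemma:corner_rays} directly from Lemma~\ref{lemma:corner_rays_almost} together with the tilting-space machinery of Section~\ref{sec:tilting}. First I would fix the covering $\Y = (Y_1, \ldots, Y_n)$ as announced at the start of the subsection: since $\M(B)$ is a Type~3 triangle or a quadrilateral, each facet $F_i$ carries exactly one integer point in its relative interior, so setting $Y_i = \{y^i\}$ with $y^i$ that point makes $\Y$ a covering of $Y(B)$ (every integer point of $\M(B)$ lies in the relative interior of some facet). This matches exactly the hypotheses of Lemma~\ref{lemma:corner_rays_almost}, which only requires $Y_i = \{y^i\}$ for $y^i \in \relint(F_i)\cap\Z^2$, together with $P\not\subset\Z^2$ and fewer than $n$ corner rays — all of which are in force here.

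Next I would invoke Lemma~\ref{lemma:corner_rays_almost} to obtain a matrix $\bar A \in \mathcal N(B,\Y)\setminus\{0\}$ such that, for every $0<\epsilon<1$, $\psi_B(r^j) = \tfrac12\psi_{B-\epsilon\bar A}(r^j) + \tfrac12\psi_{B+\epsilon\bar A}(r^j)$ for all $j$, and $\psi_{B+\epsilon\bar A}(r^{j_0}) \neq \psi_{B-\epsilon\bar A}(r^{j_0})$ for some $j_0$. The remaining work is to turn this algebraic statement into a statement about \emph{valid} inequalities for $\conv(\Rf)$: we need $\M(B+\epsilon\bar A)$ and $\M(B-\epsilon\bar A)$ to be lattice-free for suitably small $\epsilon$. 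Here I would apply Observation~\ref{obs:dimension} (using $\dim\T(B,\Y)\geq 1$, which is part of the proof of Lemma~\ref{lemma:corner_rays_almost}) to get $\delta\in(0,1)$ with $B\pm\epsilon\bar A \in \T(B,\Y)\cap\S(B)$ for $0<\epsilon\leq\delta$, and then Observation~\ref{obs:latticefree}, whose hypothesis (that $\Y$ is a covering of $Y(B)$) we have already arranged. This yields that $\M(B\pm\epsilon\bar A)$ are lattice-free polytopes, hence by Theorem~\ref{thm:int_cuts} (or rather via $\psi_B$ as in Theorem~\ref{thm:formula}) the inequalities $\sum_j \psi_{B\pm\epsilon\bar A}(r^j)s_j \geq 1$ are valid for $\conv(\Rf)$.

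Finally, I would combine these facts: fixing $\epsilon = \delta$, we have two valid inequalities $\gamma^1 = (\psi_{B+\delta\bar A}(r^j))_j$ and $\gamma^2 = (\psi_{B-\delta\bar A}(r^j))_j$ with $\psi_B(r^j) = \tfrac12\gamma^1_j + \tfrac12\gamma^2_j$ for all $j$, and $\gamma^1 \neq \gamma^2$ because they differ in coordinate $j_0$. By the definition of extreme inequality, this shows $\sum_j \psi_B(r^j)s_j\geq 1$ is not extreme, which is the claim. I do not expect a serious obstacle here — the corollary is essentially a packaging of the lemma — but the one point requiring care is checking that all the hypotheses line up: specifically that the $\Y$ chosen for the corollary is simultaneously a covering of $Y(B)$ (needed for Observation~\ref{obs:latticefree}) \emph{and} of the singleton-per-facet form required by Lemma~\ref{lemma:corner_rays_almost}. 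Both hold precisely because Type~3 triangles and quadrilaterals have exactly one integer point in the relative interior of each facet and none elsewhere on the boundary, so this compatibility is where the maximal-lattice-free hypothesis (absent in Lemma~\ref{lemma:corner_rays_almost}) is actually used.
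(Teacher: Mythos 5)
Your proposal is correct and follows essentially the same route as the paper's proof: fix $\Y$ with $Y_i$ the unique integer point in $\relint(F_i)$ (a covering of $Y(B)$ by the characterization of Type~3 triangles and quadrilaterals), invoke Lemma~\ref{lemma:corner_rays_almost}, and then use Observations~\ref{obs:dimension} and~\ref{obs:latticefree} to conclude that $\M(B\pm\epsilon\bar A)$ are lattice-free, so the original inequality is a strict convex combination of two distinct valid inequalities. Your closing remark about why the covering and singleton-per-facet requirements are simultaneously satisfied is exactly the point the paper settles by its choice of $\Y$ at the start of the subsection.
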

\begin{proof}
Apply Lemma~\ref{lemma:corner_rays_almost} on $\M(B)$ with $\Y$ to obtain $\bar A \in \mathcal{N}(B,\Y)\setminus\{0\}$ with the stated properties. Since $\Y$ is a covering of $Y(B)$, by Observation~\ref{obs:dimension}, there exists $0 < \epsilon < 1$ such that $B\pm \epsilon \bar A \in \T(B,\Y)\cap S(B)$; so by Observation~\ref{obs:latticefree}, $M(B \pm \epsilon \bar A)$ are both lattice-free.  From the conclusion of Lemma \ref{lemma:corner_rays_almost}, we see that $\sum_{j=1}^k\psi_B(r^j)s_j\geq 1$ is not extreme as it is the convex combination of two distinct valid inequalities derived from the lattice-free sets $M(B \pm \epsilon \bar A)$.
\end{proof}

\begin{lemma}[Type 3 Triangles]\label{lemma:type 3 conditions}
Suppose $\M(B)$ is a Type 3 triangle. If $\sum_{j=1}^k\psi_B(r^j)s_j\geq 1$ is extreme, then one of the following holds: \\
\textbf{Case a.} $P \subset \Z^2$.\\
\textbf{Case b.} $\verts(B) \subseteq P$.
\end{lemma}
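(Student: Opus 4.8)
The plan is to obtain this lemma as the contrapositive of Corollary~\ref{lemma:corner_rays}, which is tailored precisely to matrices~$B$ with $\M(B)$ a maximal lattice-free Type~3 triangle ($n=3$) equipped with the covering $\Y=(Y_1,Y_2,Y_3)$, $Y_i=\{y^i\}$, where $y^i$ is the unique integer point in $\relint(F_i)$. Since $Y(B)=\{y^1,y^2,y^3\}$ for a Type~3 triangle, this $\Y$ is indeed a covering of $Y(B)$, so Corollary~\ref{lemma:corner_rays} applies verbatim to $\M(B)$.

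The one preliminary step is to translate the hypothesis ``$\M(B)$ has fewer than $n$ corner rays'' into the language of the two cases. A corner ray~$r^j$ of~$\M(B)$ is one with $|I_B(r^j)|=2$, and for the triangle~$\M(B)$ this happens exactly when $p^j$ is one of its three vertices; conversely a vertex $v\in\verts(B)$ lies in~$P$ if and only if some ray~$r^j$ has $p^j=v$, and every such ray is then a corner ray. Hence $\M(B)$ has a corner ray pointing to each of its $n=3$ vertices precisely when $\verts(B)\subseteq P$, so ``$\M(B)$ has fewer than $n$ corner rays'' is equivalent to $\verts(B)\not\subseteq P$.

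With this identification in hand the argument is immediate. Assume $\sum_{j=1}^k\psi_B(r^j)s_j\geq 1$ is extreme and that Case~a fails, that is, $P\not\subset\Z^2$. If Case~b also failed, we would have $\verts(B)\not\subseteq P$, hence $\M(B)$ would have fewer than $n=3$ corner rays, and then Corollary~\ref{lemma:corner_rays} --- whose remaining hypotheses (maximal lattice-free Type~3 triangle, $P\not\subset\Z^2$) are all met --- would force the inequality to be non-extreme, a contradiction. Therefore Case~b must hold. The proof involves no genuine difficulty; the only point requiring a careful (but routine) check is the equivalence in the preliminary step, in particular ruling out an off-by-one coming from two distinct rays that happen to point to the same vertex, and confirming that the chosen~$\Y$ satisfies the covering requirement under which Corollary~\ref{lemma:corner_rays} was derived.
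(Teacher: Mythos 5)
Your proposal is correct and follows exactly the paper's route: the paper proves this lemma by a one-line appeal to Corollary~\ref{lemma:corner_rays}, and your argument is that same deduction, just with the (routine but welcome) verification that $\verts(B)\not\subseteq P$ is equivalent to having fewer than $n=3$ corner rays and that $\Y=(\{y^1\},\{y^2\},\{y^3\})$ covers $Y(B)$ spelled out explicitly.
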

\begin{proof}
This follows from Corollary \ref{lemma:corner_rays}.
\end{proof}

For quadrilaterals, Cornu\'ejols and Margot defined the \emph{ratio condition}
as a necessary and sufficient condition to yield an extreme inequality when all corner rays
are present.  Suppose $p^1, p^2, p^3, p^4$ are the corner ray intersections
assigned in a counter-clockwise orientation, and $y^i$ is the integer point
contained in $[p^i, p^{i+1}]$.  The ratio condition holds if there does not
exist a scalar $t > 0$ such that 
\begin{equation}\label{eq:ratio-cond}
\frac{\Vert y^i - p^i\Vert}{\Vert y^i - p^{i+1}\Vert} = \begin{cases}
t & \text{for} \ i = 1,3\\
\frac{1}{t} & \text{for} \ i = 2,4.
\end{cases}
\end{equation}
This is illustrated in Figure~\ref{fig:ratio-cond}.
We will now show the relation between the ratio condition and the tilting space.
\begin{figure}
\centering

\scalebox{0.85}{%
\ifpdf
\input{figures/figureRatioCondition.pdftex_t}% \input{#1.pdf_t}
\else
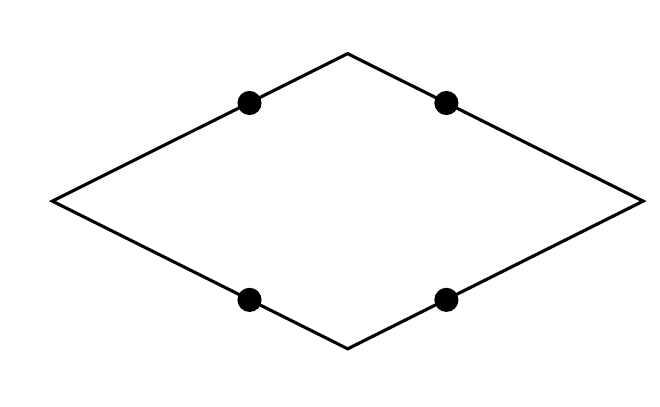
\fi}
\caption{Example of a quadrilateral for which the ratio condition does \emph{not}
  hold, i.e., there exists a $t>0$ satisfying \eqref{eq:ratio-cond}. Here
  $\dim \T(B,\Y) \neq 0$.}
\label{fig:ratio-cond}
\end{figure}
\begin{lemma}
\label{lemma:ratio_condition}
Suppose $\M(B)$ is a quadrilateral with four corner rays.  If the ratio
condition does not hold, i.e., there exists a scalar $t>0$
with~\eqref{eq:ratio-cond}, then  $\dim \T(B,\Y) \neq 0$.
\end{lemma}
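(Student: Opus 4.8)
The plan is to reduce the claim to a rank computation for the linear system that cuts out $\mathcal N(B,\Y)$, and then to read the rank deficiency straight off the failure of the ratio condition via a short determinant identity. Throughout I fix the labeling so that the facets of the quadrilateral $\M(B)$ are $F_1,\dots,F_4$ with $F_i=[p^i,p^{i+1}]$ the edge joining the two corner‑ray intersections $p^i,p^{i+1}$ (indices cyclic mod~$4$), $y^i$ the unique integer point in $\relint(F_i)$, and $\Y=(Y_1,\dots,Y_4)$ with $Y_i=\{y^i\}$. A corner ray pointing to the vertex $p^i=F_{i-1}\cap F_i$ has $|I_B(r)|=2$ and $I_B(r)=\{i-1,i\}$, so the equations cutting out $\T(B,\Y)$ are precisely the four equations~\eqref{eq:tilting-1}, i.e.\ $a^i\cdot(y^i-f)=1$, together with the four equations~\eqref{eq:tilting-2}, which (scaling the ray to $p^i-f$) read $a^{i-1}\cdot(p^i-f)=a^i\cdot(p^i-f)$; the inequalities~\eqref{eq:tilting-3} are strict and hold for $B$. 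Hence $\dim\T(B,\Y)=\dim\mathcal N(B,\Y)$, where $\mathcal N(B,\Y)\subset\R^{4\times2}$ is the solution set of the homogeneous system
\[
  \bar a^i\cdot(y^i-f)=0,\qquad (\bar a^{i-1}-\bar a^i)\cdot(p^i-f)=0\qquad(i=1,\dots,4\ \text{cyclic}),
\]
so it suffices to produce a nonzero $\bar A=(\bar a^1;\dots;\bar a^4)\in\mathcal N(B,\Y)$.

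Next I would parametrize. Since $f\in\intr(\M(B))$ we have $y^i-f\neq 0$, so the first four equations force $\bar a^i=\lambda_i w^i$ for scalars $\lambda_i$, where $w^i$ is the fixed nonzero vector obtained by rotating $y^i-f$ by $90^\circ$; thus $w^i\cdot v=\det[y^i-f,\,v]$ for every $v\in\R^2$. Substituting into the linking equations yields $\lambda_i\,(w^i\cdot(p^i-f))=\lambda_{i-1}\,(w^{i-1}\cdot(p^i-f))$. Because $f$ is interior, no triple $\{f,y^i,p^i\}$ or $\{f,y^{i-1},p^i\}$ is collinear, so every coefficient appearing here is nonzero; consequently $\lambda_1$ determines $\lambda_2,\lambda_3,\lambda_4$ uniquely, and the constraint at $p^1$ closes the cycle. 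A nonzero solution therefore exists if and only if
\[
  \Pi:=\prod_{i=1}^{4}\frac{w^{i-1}\cdot(p^i-f)}{w^i\cdot(p^i-f)}=1 .
\]

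Then I would evaluate $\Pi$. Writing $y^i=(1-s_i)p^i+s_ip^{i+1}$ with $s_i\in(0,1)$ (so that $\lVert y^i-p^i\rVert/\lVert y^i-p^{i+1}\rVert=s_i/(1-s_i)$), multilinearity of the determinant gives $w^i\cdot(p^i-f)=-s_i\det[p^i-f,\,p^{i+1}-f]$ and $w^{i-1}\cdot(p^i-f)=(1-s_{i-1})\det[p^{i-1}-f,\,p^i-f]$. Forming the product over $i=1,\dots,4$, the four determinant factors in the numerator are exactly the four in the denominator (and the sign $(-1)^4=1$), so they cancel and $\Pi=\prod_{i=1}^4 (1-s_i)/s_i$. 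If the ratio condition fails, there is a $t>0$ with $s_i/(1-s_i)=t$ for $i=1,3$ and $s_i/(1-s_i)=1/t$ for $i=2,4$ (after matching my labels to those in~\eqref{eq:ratio-cond}), hence $\prod_i s_i/(1-s_i)=1$ and therefore $\Pi=1$. This produces a nonzero $\bar A\in\mathcal N(B,\Y)$, so $\dim\T(B,\Y)=\dim\mathcal N(B,\Y)\geq1$, i.e.\ $\dim\T(B,\Y)\neq0$.

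The only real friction I anticipate is bookkeeping: verifying that the four determinant factors genuinely cancel around the $4$‑cycle, that every coefficient in the recursion is nonzero (this is exactly where $f\in\intr(\M(B))$ enters), and reconciling my indexing $F_i=[p^i,p^{i+1}]$, $y^i\in\relint(F_i)$ with the one implicit in~\eqref{eq:ratio-cond}. None of this is deep, but the index gymnastics must be done carefully; everything else is routine linear algebra in $\R^2$.
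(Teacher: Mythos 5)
Your proposal is correct and takes essentially the same approach as the paper: both reduce the claim to the same eight-equation linear system defining the tilting space of the quadrilateral with its four corner-ray and four integer-point conditions, and both show that this system becomes degenerate precisely when the product of the edge ratios equals one (your $\prod_i s_i/(1-s_i)=1$ is the paper's $\alpha\beta\gamma\delta=1$), which is exactly the failure of the ratio condition. The only difference is in execution: the paper row-reduces the $8\times 8$ matrix and tests its determinant, whereas you parametrize each $\bar a^i$ as a scalar multiple of the rotation of $y^i-f$ and close a cycle product, an equivalent null-space computation.
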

\begin{proof}

We will first analyze the tilting space equations with four corner rays, and then apply the assumption that the ratio condition does not hold.
For convenience we define $\y^i:= y^i - f$ and $\p^i := p^i - f$, where $p^i
% = f + \frac{1}{\psi_B(r^i)} r^i
$ are the ray intersections.  Then $\p^i = 
\frac{1}{\psi_B(r^i)} r^i$. 

We want to determine when there is not a unique solution to the following system of equations that come from the tilting space:
\begin{equation*}
\begin{array}{ccc}
\begin{array}{r@{\;}l}
a^1 \cdot \y^1&= 1\\
a^1 \cdot \p^2&= a^2\cdot  \p^2 \\
a^2\cdot \y^2  &= 1\\
a^2\cdot \p^3  &= a^3\cdot \p^3 \\
a^3\cdot \y^3  &= 1\\
a^3\cdot \p^4  &=a^4\cdot  \p^4 \\
a^4 \cdot \y^4 &= 1\\
a^4\cdot \p^1  &=a^1\cdot  \p^1 
\end{array}
&
\quad\text{or}\quad
&
\begin{bmatrix}
\y^1 \\
\p^2 & -\p^2 \\
& \y^2\\
& \p^3 & - \p^3 \\
& & \y^3\\
 & & \p^4 & -\p^4\\
 & & & \y^4\\
 - \p^1 & & & \p^1
\end{bmatrix}
\begin{bmatrix}
a^1\\ a^2 \\ a^3\\a^4
\end{bmatrix}
= 
\begin{bmatrix}
1\\0\\1\\0\\1\\0\\1\\0
\end{bmatrix}
\end{array}
\end{equation*}
as an $8 \times 8$  matrix equation where every vector shown in the matrix is a row vector of
size 2.  We will analyze the determinant of the matrix.

Since the points $\y^1, \y^2, \y^3, \y^4$ are on the interior of each facet,
they can be written as certain convex combinations of $\p^1, \p^2, \p^3, \p^4$.  We write this in a complicated form at first to simplify resulting calculations.  Here, $\alpha' = 1 + \alpha$, and $\alpha >0$, and similarly for $\beta, \gamma$, and~$\delta$.
$$
\begin{array}{r@{\;}lcr@{\;}l}
\y^1 &= \frac{1}{\alpha'} \p^1 + \frac{\alpha}{\alpha'} \p^2 & &\p^1 &= \alpha' \y^1 - \alpha \p^2\\
\y^2 &= \frac{1}{\beta'} \p^2 + \frac{\beta}{\beta'} \p^3 &\Leftrightarrow &\p^2 &= \beta' \y^2 - \beta \p^3\\
\y^3 &= \frac{1}{\gamma'} \p^3 + \frac{\gamma}{\gamma'} \p^4 & &\p^3 &= \gamma' \y^3 - \gamma \p^4\\
\y^4 &=\frac{1}{\delta'} \p^4 + \frac{\delta}{\delta'} \p^1 & &\p^4 &= \delta' \y^4 - \delta \p^1
\end{array}
$$
Now just changing the last row using the above columns
$$
\begin{bmatrix}
 - \p^1 & 0& 0& \p^1
\end{bmatrix} \rightarrow
\begin{bmatrix}
  0&\alpha \p^2 &0 & \p^1
\end{bmatrix} \rightarrow
\begin{bmatrix}
 0 &0 &-\alpha \beta \p^3 & \p^1
\end{bmatrix} \rightarrow
\begin{bmatrix}
0  &0 &0 & \alpha \beta \gamma \p^4 + \p^1 
\end{bmatrix}
$$
The resulting matrix, after adding this last row and substituting in $\y^4$, is
$$
\begin{bmatrix}
\y^1 \\
\p^2 & -\p^2 \\
& \y^2\\
& \p^3 & - \p^3 \\
& & \y^3\\
 & & \p^4 & -\p^4\\
 & & & \frac{1}{\delta'} \p^4 + \frac{\delta}{\delta'} \p^1\\
  & & & \alpha \beta \gamma \p^4 + \p^1 
\end{bmatrix}
$$
This is now an upper block triangular matrix.  The first three blocks are all non-singular, and the last block is non-singular if and only if there does not exist a $t$ such that 
$$
  \frac{1}{\delta'} \p^4 + \frac{\delta}{\delta'} \p^1 =  t(\alpha \beta \gamma \p^4 + \p^1 )\ \Rightarrow \ \Big(\frac{\delta}{\delta'} - t\Big) \p^1 + \Big(\frac{1}{\delta'} - t \alpha \beta \gamma\Big)\p^4 = 0.
$$
If such a $t$ exists, then $t = \frac{\delta}{\delta'}$ since $\p^1$ and $\p^4$ are linearly independent.    It follows that $\alpha \beta \gamma \delta = 1$ if and only if $\dim \T(B,\Y) \neq 0$.  If the ratio condition does not hold, then it is easy to see that $\alpha= \frac{1}{\beta} = \gamma  = \frac{1}{\delta}$, and hence $\alpha \beta \gamma \delta = 1$ and $\dim \T(B,\Y) \neq 0$.
\end{proof}

\begin{lemma}[Quadrilaterals]\label{lemma:quadrilateral cond}
Suppose $\M(B)$ is a quadrilateral.  If $\sum_{j=1}^k\psi_B(r^j)s_j\geq 1$ is extreme, then one of the following holds: \\
\textbf{Case a.} $P \subset \Z^2$. \\
\textbf{Case b.} $\verts(B) \subseteq P$ and the ratio condition holds. Moreover, $\M(B)$ is the unique quadrilateral with these four corner rays and these four integer points.  

\end{lemma}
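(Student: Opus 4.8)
The plan is to combine the corner-ray analysis (Corollary~\ref{lemma:corner_rays}) with the ratio-condition analysis (Lemma~\ref{lemma:ratio_condition}) and then handle the extra uniqueness claim separately. First I would observe that if $P\subset\Z^2$, we are in Case~a and there is nothing to prove, so assume $P\not\subset\Z^2$. By Corollary~\ref{lemma:corner_rays}, if $\M(B)$ had fewer than $4$ corner rays, the inequality $\sum_{j=1}^k\psi_B(r^j)s_j\ge 1$ would not be extreme; hence extremality forces all $4$ corner rays to be present, which is exactly $\verts(B)\subseteq P$. (Here I use that with $4$ corner rays every vertex of $\M(B)$ is some ray intersection $p^j$; formally one checks $|I_B(r^j)|=2$ for those four rays and that these account for all four vertices.) Next, with all four corner rays present, choose $\Y=(Y_1,\dots,Y_4)$ where $Y_i$ is the unique integer point in $\relint(F_i)$, which is a covering of $Y(B)$ since $\M(B)$ is a maximal lattice-free quadrilateral. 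If the ratio condition were violated, then by Lemma~\ref{lemma:ratio_condition} we would have $\dim\T(B,\Y)\neq 0$, i.e.\ $\dim\T(B,\Y)\ge 1$. Then Observation~\ref{obs:dimension} gives $\bar A\in\mathcal N(B,\Y)\setminus\{0\}$ and $\epsilon>0$ with $B\pm\epsilon\bar A\in\T(B,\Y)\cap\S(B)$, and Observation~\ref{obs:latticefree} makes both $\M(B\pm\epsilon\bar A)$ lattice-free. It remains to certify that the two resulting inequalities are \emph{distinct}: I would argue that since $P\not\subset\Z^2$ there is a ray $r$ with $p^j\in F_i\setminus\Z^2$ for some $i$, and as in the proof of Lemma~\ref{lemma:corner_rays_almost} one can choose $\bar A$ so that $\bar a^i\cdot r\neq 0$ (the subspace of $\mathcal N(B,\Y)$ with $\bar a^i\cdot r=0$ is proper because $\bar a^i\neq 0$ and $r$ is not parallel to $\bar y^i$). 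Then $\psi_{B+\epsilon\bar A}(r)\neq\psi_{B-\epsilon\bar A}(r)$, and Lemma~\ref{lemma:convex_combination} writes $\sum_j\psi_B(r^j)s_j\ge 1$ as a strict convex combination of two distinct valid inequalities, contradicting extremality. So the ratio condition must hold.

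For the uniqueness statement, I would argue as follows. Once the four corner rays $r^1,r^2,r^3,r^4$ and the four integer points $y^1,\dots,y^4$ (with $y^i$ on the edge between the intersections of $r^i$ and $r^{i+1}$) are fixed, any maximal lattice-free quadrilateral $\M(B')$ realizing this combinatorial data must have each vertex on the corresponding ray and each $y^i$ in the relative interior of the corresponding edge. A quadrilateral of this kind is determined by the four parameters giving where along each ray the vertex sits; the four incidence conditions $y^i\in[p^i,p^{i+1}]$ (equivalently the four relations $y^i=\tfrac{1}{\alpha_i'}p^i+\tfrac{\alpha_i}{\alpha_i'}p^{i+1}$ from the proof of Lemma~\ref{lemma:ratio_condition}) pin down these parameters up to the one-dimensional family encoded by $\dim\T(B,\Y)$. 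Since the ratio condition holds, Lemma~\ref{lemma:ratio_condition}'s computation (the last $2\times2$ block being nonsingular, equivalently $\alpha\beta\gamma\delta\neq 1$ fails only when the ratio condition fails) shows $\dim\T(B,\Y)=0$, so there is no such one-parameter family: the quadrilateral is rigid given the rays and integer points. Hence $\M(B)$ is the unique such quadrilateral.

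I expect the main obstacle to be the uniqueness part: turning ``$\dim\T(B,\Y)=0$'' into a genuine statement about \emph{all} quadrilaterals with the prescribed corner rays and integer points, rather than just about infinitesimal tilts of one fixed $B$. The tilting space is a priori only a local (first-order) object, so I would need either (i) to note that the defining conditions of a quadrilateral realizing the data are themselves linear in the rows of $B'$ once we fix which ray each vertex lies on and which edge each $y^i$ lies on — so the solution set is an affine space whose direction space is exactly $\mathcal N(B,\Y)$, making $\dim\T(B,\Y)=0$ equivalent to global uniqueness — or (ii) to observe that any two such quadrilaterals can be connected by a segment inside the realization space, along which the first-order rigidity forbids motion. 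Option (i) seems cleanest and is essentially already implicit in how $\T(B,\Y)$ was set up. The corner-ray count and the distinctness-of-inequalities argument are routine given the machinery already established in Lemma~\ref{lemma:corner_rays_almost}, Corollary~\ref{lemma:corner_rays}, and Lemma~\ref{lemma:ratio_condition}.
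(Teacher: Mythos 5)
Your overall architecture matches the paper's: Corollary~\ref{lemma:corner_rays} forces all four corner rays, Lemma~\ref{lemma:ratio_condition} plus the tilting/perturbation machinery (Observations~\ref{obs:dimension} and~\ref{obs:latticefree}, Lemma~\ref{lemma:convex_combination}) kills the case where the ratio condition fails, and uniqueness is reduced to $\dim\T(B,\Y)=0$ via the linearity of the realization conditions. One small point in the ratio-condition step: your distinctness argument is run inside the full space $\mathcal N(B,\Y)$, which carries all four corner-ray equations, whereas Lemma~\ref{lemma:corner_rays_almost} is stated and proved only for fewer than $n$ corner rays. You must either redo the ``every row of $\bar A$ is nonzero'' propagation for the cyclic system of equations, or do what the paper does: when the ratio condition fails, one corner-ray equation is redundant, so the null space $N$ of the reduced system \eqref{eq:N(B,Y)} is contained in $\mathcal N(B,\Y)$, and the matrix $\bar A$ produced by the proof of Lemma~\ref{lemma:corner_rays_almost} can be reused verbatim. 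This is a repairable omission, not a flaw in the approach.

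The genuine gap is in your justification of uniqueness. You deduce $\dim\T(B,\Y)=0$ from ``the ratio condition holds,'' asserting that the computation in Lemma~\ref{lemma:ratio_condition} shows $\alpha\beta\gamma\delta=1$ only when the ratio condition fails. That lemma proves only the opposite implication: failure of the ratio condition (a single $t$ with the alternating pattern in \eqref{eq:ratio-cond}, i.e.\ $\alpha=\gamma=t$, $\beta=\delta=1/t$) forces $\alpha\beta\gamma\delta=1$ and hence $\dim\T(B,\Y)\neq 0$. The converse you invoke does not follow from the determinant computation, since $\alpha\beta\gamma\delta=1$ is strictly weaker than the existence of one common $t$; indeed, the paper's remark following this lemma observes that the equivalence of the ratio condition with $\dim\T(B,\Y)=0$ is only obtained by appealing to the Cornu\'ejols--Margot characterization, which is not available inside this proof. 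The fix is the one the paper uses, and you already have all its ingredients: the set of matrices $A$ such that $M(A)$ has the same four corner rays and the same four integer points on the corresponding facets is exactly the solution set of the equality system defining $\T(B,\Y)$; if that system had more than one solution, then $\dim\T(B,\Y)\geq 1$, and your own first-paragraph perturbation argument would exhibit the inequality as a strict convex combination of two distinct valid inequalities, contradicting extremality. So hang $\dim\T(B,\Y)=0$ directly on extremality rather than on the ratio condition; with that change, your option (i) for passing from $\dim\T(B,\Y)=0$ to global uniqueness is precisely the paper's argument.
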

\begin{proof}
Suppose that we are not in Case a.  Corollary \ref{lemma:corner_rays} shows that all four corner rays must exist.  Lemma \ref{lemma:ratio_condition} shows that if the ratio condition does not hold, then $\dim\T(B,\Y) \geq 1$ and so one of the equalities in $\T(B,\Y)$ corresponding to a corner ray is redundant. This means that $N$ is a subspace of $\mathcal{N}(B,\Y)$ where $N$ is the subspace given by the equations \eqref{eq:N(B,Y)}. Since we suppose $P \not\subset \Z^2$, the proof of Lemma \ref{lemma:corner_rays_almost} shows that there exists $\bar A\in N\setminus\{0\}$ such that for every $0 < \epsilon < 1$, $\psi_{B+\epsilon \bar A}(r^j) \neq \psi_{B-\epsilon \bar A}(r^j)$ for some $j = 1, \dots, k$ and $\psi_B(r^j) = \frac{1}{2} \psi_{B-\epsilon \bar A}(r^j) +\frac{1}{2} \psi_{B+\epsilon \bar A}(r^j)$ for all $j = 1, \ldots, k$. Since $N$ is a subspace of $\mathcal{N}(B,\Y)$, we have that $\bar A \in \mathcal{N}(B,\Y)\setminus\{0\}$. We can again use Observations~\ref{obs:dimension} and \ref{obs:latticefree} to show that $\sum_{j=1}^k\psi_B(r^j)s_j\geq 1$ is not extreme.  

Observe that the set of matrices $A$ such that $M(A)$ contains the same set of integer points as $M(B)$ and has the same four corner rays as $M(B)$ is given by all solutions to the equality system in $\T(B, \Y)$. If this system had non unique solutions, then $\dim\T(B,\Y) \geq 1$ and following the same reasoning as above, we would conclude that $\sum_{j=1}^k\psi_B(r^j)s_j\geq 1$ is not extreme. 
\end{proof}

\begin{remark}
The ratio condition is indeed equivalent to $\dim \T(B,\Y) = 0$.  We can see this by showing that $\dim \T(B,\Y) \neq 0$ if and only if the ratio condition does not hold.  Lemma \ref{lemma:ratio_condition} shows that if the ratio condition does not hold, then $\dim \T(B,\Y) \neq 0$.  On the other hand, if $\dim \T(B,\Y) \neq 0$, then $\sum_{j=1}^k\psi_B(r^j)s_j\geq 1$ is not extreme using similar arguments as in the proof above of Lemma~\ref{lemma:quadrilateral cond}. Cornu\'ejols and Margot \cite{cm} show that the ratio condition holds if and only if $\sum_{j=1}^k\psi_B(r^j)s_j\geq 1$ is extreme, and so since $\sum_{j=1}^k\psi_B(r^j)s_j\geq 1$ is not extreme, the ratio condition does not hold.
\end{remark}

\subsection{Type 1 triangles}

\begin{figure}%
\centering

\scalebox{0.85}{%
\ifpdf
\input{figures/figureType1Cases.pdftex_t}% \input{#1.pdf_t}
\else
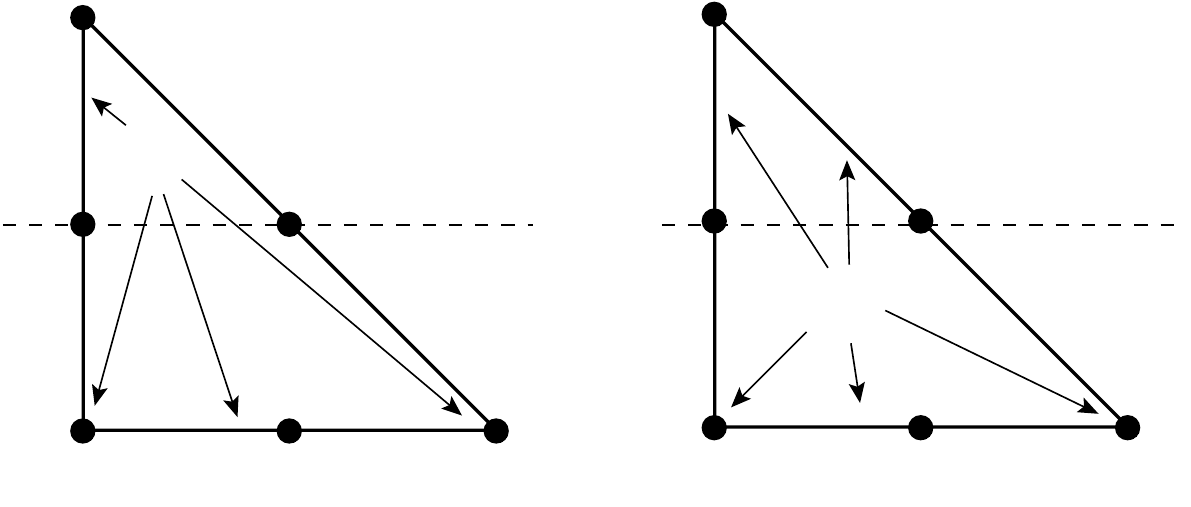
\fi}
\caption{Cases for extreme Type 1 triangles in Lemma~\ref{lem:Type1}}
\label{fig:type1}
\end{figure}%

\begin{lemma}[Type 1 Triangles]
\label{lem:Type1}
Suppose $\M(B)$ is a Type 1 triangle and suppose that $\sum_{j=1}^k\psi_B(r^j)s_j\geq 1$ cannot be realized or dominated by an inequality derived from either a Type 2 triangle or a split. 
 If $\sum_{j=1}^k\psi_B(r^j)s_j\geq 1$ is extreme, then there exist $p^1, p^2 \in \verts(B) \cap
 P$. Moreover, labeling the facet containing $p^1, p^2$ as~$F_3$, one of the following holds:\\
\textbf{Case a.} $f \notin S_3$.\\
\textbf{Case b.} $f \in S_3$, and $P \not\subset S_3$.
%Here $S$ denotes the split parallel to $[p^1, p^2]$ containing $\{p^1, p^2, y^1, y^2\}$.
\end{lemma}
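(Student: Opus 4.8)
The plan is to proceed by a series of tilting arguments, each ruling out a configuration not covered by the two cases. First I would set up the notation: let $F_1, F_2, F_3$ be the facets of the Type 1 triangle $\M(B)$, with $y^i$ the unique integer point in $\relint(F_i)$, and let $v^1, v^2, v^3$ be the three (integer) vertices, labelled so that $v^i \notin F_i$. Since $\Y = (\{y^1\},\{y^2\},\{y^3\})$ is a covering of $Y(B)$, Observations \ref{obs:dimension} and \ref{obs:latticefree} apply, so throughout I only need to produce an element $\bar A \in \mathcal N(B,\Y)\setminus\{0\}$ that changes the coefficient of some ray, and extremeness fails by Lemma \ref{lemma:convex_combination}.

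The first step is to show that two ray intersections must be at vertices of $\M(B)$, i.e.\ $|P \cap \verts(B)| \geq 2$. I would argue this by contradiction using a simple tilt (Lemma \ref{lemma:simple_tilts}) or a slight generalization of it: if some facet, say $F_1$, has no ray intersection at either of its endpoints (so $P\cap F_1 \subset \relint(F_1)$), then either $P\cap F_1 = \{y^1\}$ or $P\cap F_1$ contains a non-integral point of $\relint(F_1)$. In the latter case Lemma \ref{lemma:simple_tilts} immediately gives non-extremeness. In the former case, there are no ray intersections in the interior of $F_1$ other than $y^1$; then I would tilt $F_1$ (keeping $y^1$ on it) and observe that this pushes the two vertices $v^2$ and $v^3$ of $\M(B)$, hence moves the coefficients of the corner rays through $v^2$ and $v^3$ — unless those corner rays are themselves absent, which loops back to the hypothesis. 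Careful bookkeeping here shows that if fewer than two vertices carry a ray intersection, one can always tilt some facet nontrivially, so $\sum \psi_B(r^j)s_j\geq 1$ is not extreme; hence there exist $p^1,p^2\in\verts(B)\cap P$, and I label the facet containing both of them as $F_3$.

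The second step is the case split on whether $f\in S_3$. With $F_3$ now fixed by the two corner rays $p^1,p^2$ (and $y^3$), the only freedom in $\T(B,\Y)$ is to tilt $F_1$ and $F_2$. If we are not in Case a or Case b, then $f\in S_3$ and $P\subset S_3$. I would show this forces non-extremeness: since every ray intersection lies in the strip $S_3$, one can tilt the two facets $F_1,F_2$ (rotating them about $y^1$ and $y^2$ respectively) so that the apex vertex $v^3 = F_1\cap F_2$ slides, while all ray intersections — being confined to $S_3$, which the tilt leaves essentially in place near $F_3$ — have their $I_B(r^j)$ preserved; this is where I would invoke the structure of the tilting space to produce the required $\bar A\in\mathcal N(B,\Y)$ with a genuine change in some ray coefficient. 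The new sets $\M(B\pm\epsilon\bar A)$ are lattice-free (Observation \ref{obs:latticefree}); a subtlety is that after tilting, the configuration may become a Type 2 triangle or split, but the hypothesis of the lemma explicitly excludes the possibility that the inequality is realized or dominated by such — so I would need to check that the convex-combination decomposition still contradicts extremeness, possibly by arguing the decomposed inequalities are not both of that excluded type, or by directly using that a strict convex combination of two *distinct* valid inequalities cannot be extreme regardless of their combinatorial type.

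The main obstacle I anticipate is the second step: controlling which rays change combinatorial status when both $F_1$ and $F_2$ are tilted simultaneously, and ensuring the dimension count $\dim\T(B,\Y)\geq 1$ actually holds in the $f\in S_3$, $P\subset S_3$ case. Unlike the single-facet tilt of Lemma \ref{lemma:simple_tilts} or the all-corner-rays count of Lemma \ref{lemma:corner_rays_almost}, here the number of corner-ray equalities involving $a^1$ and $a^2$ depends on how many ray intersections sit at $v^3$ and at the endpoints of $F_1,F_2$ on $F_3$; I would need a careful accounting showing that the assumption $P\subset S_3$ — which says no ray "escapes" toward $F_3$ from outside the strip — leaves a free direction in $\mathcal N(B,\Y)$ realizing a nonzero change in $\psi$. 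The geometric content, illustrated in Figure \ref{fig:type1}, is that a Type 1 triangle with $f$ deep inside the strip $S_3$ is "too symmetric" and can always be split into two thinner lattice-free triangles.
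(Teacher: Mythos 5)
Your overall two-step skeleton (first force two corner rays on a common facet, then split into the cases $f\notin S_3$ versus $f\in S_3$, $P\not\subset S_3$) matches the paper, but both steps contain genuine gaps, and they stem from the same two oversights. First, for a Type 1 triangle the three integer \emph{vertices} of $\M(B)$ also belong to $Y(B)$, so $\Y=(\{y^1\},\{y^2\},\{y^3\})$ is \emph{not} a covering of $Y(B)$; Observation~\ref{obs:latticefree} therefore does not apply, and perturbed sets can capture a vertex in their interior. Second, in the configurations you are left with, non-extremeness is not the provable (or even true) conclusion; the paper instead invokes the standing hypothesis that the inequality cannot be realized or dominated by a Type 2 triangle or a split. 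Concretely, in your Step 1, once $F_1$ is chosen so that neither of its endpoints lies in $P$ and $P\cap\relint(F_1)\setminus\Z^2=\emptyset$, no ray meets $F_1$ except possibly at $y^1$, so tilting $F_1$ changes no coefficient $\psi_B(r^j)$ at all (and if $P\subseteq\{y^1,y^2,y^3\}$, no tilt of any facet changes anything); your claim that ``one can always tilt some facet nontrivially, so the inequality is not extreme'' is false. The paper's way out is different: rotating $F_1$ about $y^1$ turns $\M(B)$ into a Type 2 triangle realizing the \emph{same} inequality, which contradicts the hypothesis of the lemma.

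Step 2 is where the real damage is. If $f\in S_3$ and $P\subset S_3$, the triangle inequality may coincide exactly with the split inequality from $S_3$ (for instance when all ray intersections lie on the two lattice lines bounding $S_3$), and a split inequality can be extreme, so no tilting argument can ``force non-extremeness'' there. The paper's Step 2 is a one-line domination argument: $P\cup\{f\}\subset S_3$ gives $\psi_{S_3-f}(r^j)\le\psi_B(r^j)$ for all $j$, so the inequality is realized or dominated by the split inequality from $S_3$, contradicting the hypothesis --- no dimension count, no control of the sets $I_B(r^j)$, no lattice-freeness verification. Your proposed simultaneous tilt of $F_1$ and $F_2$ would moreover have to contend with the uncovered integer apex $F_1\cap F_2$, which can land in the interior of one of the perturbed sets, so even in the strictly dominated sub-case you would need an add-an-edge device in the spirit of Lemma~\ref{lemma:add_edge}; the domination argument makes all of this unnecessary.
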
%

Figure~\ref{fig:type1} illustrates the two cases of the lemma.

\begin{proof}
\textbf{Step 1.}  We will show that if $\#(\verts(B) \cap P) \leq 1$, then either $\sum_{j=1}^k\psi_B(r^j)s_j\geq 1$ is not extreme, or it is realized by a Type 2 inequality.

If $\#(\verts(B) \cap P) \leq 1$, then there is a facet whose vertices are not
contained in $P$; without loss of generality, let this facet be $F_1$.  We now
consider a simple tilt of facet $F_1$.  Lemma \ref{lemma:simple_tilts} shows
that if $P \cap \relint(F_1) \setminus\Z^2 \neq \emptyset$, then $\sum_{j=1}^k\psi_B(r^j)s_j\geq 1$ is
not extreme.  Otherwise, if $P \cap \relint(F_1) \setminus\Z^2 =
\emptyset$, then since there are no corner rays, we can tilt $F_1$ with $y^1$
as a fulcrum and create a Type~2 triangle that realizes the same inequality as
$\sum_{j=1}^k\psi_B(r^j)s_j\geq 1$ (see Figure~\ref{figure:simple2}).

%
%Let $Y_1$ contain only $y^1$, the integer point on its relative interior.  Let $Y_2$ contain all three integer points on $F_2$, and let $Y_3$ contain all integer points on $F_3$.  Now $\T(B,\Y)$ contains 8 equations, except that 3 of them are redundant: the corner ray equation, and the 2 integer point equations for the other 2 vertices of $B$.  Therefore, 
%$$
%\dim \T(B, \Y) \geq 6 - 5 = 1.
% $$  
%There is one dimension to tilt in. In other words, there exists $A \in \R^{n \times m}$ and $\epsilon > 0$ small enough such that $B \pm \epsilon A \in \T(B, \Y)\cap \S(B)$. By our choice of $Y_2$ and $Y_3$, the hypothesis of Observation~\ref{obs:latticefree} is satisfied and therefore $B \pm \epsilon A$ are both lattice-free. Note that the equalities defining $\T(B, \Y)$ corresponding to $Y_2$ and $Y_3$ imply that the facets normals corresponding to $F_2$ and $F_3$ are fixed. Hence both $B \pm \epsilon A$ create Type 2 triangles.  If these tilts $B \pm \epsilon A$ change at least one coefficient in the valid inequality derived from $B$, then $\sum_{j=1}^k\psi_B(r^j)s_j\geq 1$ is not extreme.  Otherwise, we see that $\psi_B$ can be realized by a Type 2 inequality (see Figure~\ref{figure:simple2}).

\smallbreak
\begin{figure}
\centering

\scalebox{0.85}{%
\ifpdf
\input{figures/figureType1toType2orTiltNew.pdftex_t}% \input{#1.pdf_t}
\else
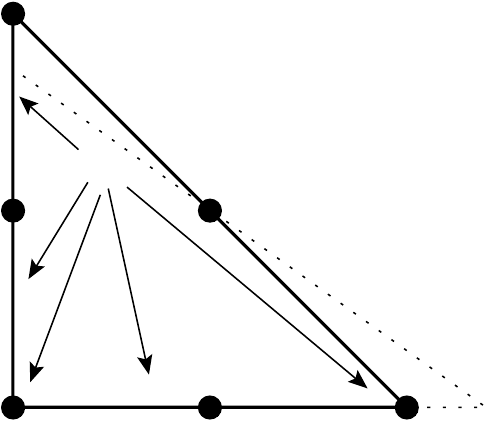
\fi}
\caption{In the proof of Lemma~\ref{lem:Type1}, Step~1, a Type 1 triangle
  can be replaced by a Type 2 triangle (dotted) that gives the same inequality.%
%The left Type 1 triangle reduces to a Type 2 triangle, while the
%right triangle is shown to not be extreme.
}\label{figure:simple2}
\end{figure}
\textbf{Step 2.} From Step 1, if $\sum_{j=1}^k\psi_B(r^j)s_j\geq 1$ is extreme, then $\#(\verts(B) \cap P) \geq 2$, i.e., there exist $p^1, p^2 \in \verts(B) \cap P$. As in the statement of this lemma, $p^1, p^2 \in F_3$.  If $P \cup \{f\} \subset S_3$, then $\sum_{j=1}^k\psi_B(r^j)s_j\geq 1$ is dominated or realized by the valid inequality derived from $S_3$. Therefore either Case a or Case b occurs.   
\end{proof}

\subsection{Type 2 triangles and splits} For these two types of maximal
lattice-free sets, we allow tilts where $\Y = (Y_1, \ldots, Y_n)$ may not be a
covering of $Y(B)$. This may create non-lattice-free sets in $\T(B, \Y) \cap
\S(B)$ as the hypothesis of Observation~\ref{obs:latticefree} is not
satisfied. We handle this by adding an additional edge to take care of the
conflicting lattice points in the interior. Recall the notation $v(F_i)$ for
the lattice vector which generates the sub-lattice of $\Z^2$ parallel
to~$F_i$.  Moreover, we recall that $(x^1, x^2)$ denotes the open line segment 
between $x^1$ and $x^2$. 

\begin{figure}%
\centering

\scalebox{0.85}{%
\ifpdf
\input{figures/figureType2non-latticeFreeProof.pdftex_t}% \input{#1.pdf_t}
\else
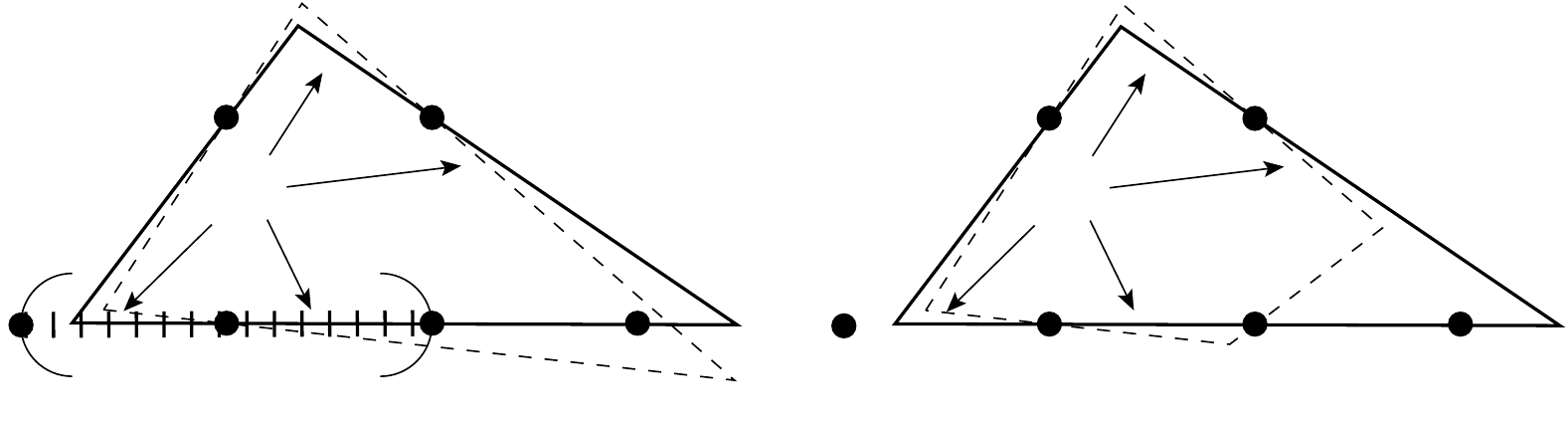
\fi}
\caption{The geometry of Lemma~\ref{lemma:add_edge}.  (a)~The hypothesis of
the lemma regarding the ray intersections on~$F_3$.  (b)~A new edge is
constructed such that no rays point to it, turning the triangle to a quadrilateral.}
\label{fig:add_edge}
\end{figure}
\begin{lemma}
\label{lemma:add_edge}
Let $\M(B)$ be a Type 2 triangle with $\#(\conv(P \cap F_3) \cap \Z^2) \leq
1$. Suppose there exists a point $y^3 \in F_3\cap\Z^2$ such that $P \cap F_3
\subset (y^3 - v(F_3), y^3 + v(F_3))$.  Let $Y_i = \{y^i\}$, and suppose that
$\dim \T(B, 
\Y) \geq 1$. 

For any $\bar A \in \mathcal{N}(B, \Y)\setminus\{0\}$, there exists an $0 < \epsilon_1 < 1$ such that $\sum_{j=1}^k \psi_{B + \epsilon \bar A}(r^j)s_j\geq 1$ is a valid inequality for $\conv(\Rf)$ for every $0 < \epsilon \leq \epsilon_1$.
\end{lemma}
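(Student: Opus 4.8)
The plan is to fix an arbitrary $\bar A\in\mathcal N(B,\Y)\setminus\{0\}$ and show that, for every sufficiently small $\epsilon>0$, the set $\M(B+\epsilon\bar A)$ — possibly after chopping off one corner by an extra edge that no ray reaches — is a bounded lattice-free polytope with $f$ in its interior whose Minkowski functional takes exactly the values $\psi_{B+\epsilon\bar A}(r^j)$ on the rays; validity of $\sum_j\psi_{B+\epsilon\bar A}(r^j)s_j\ge1$ then follows from Theorems~\ref{thm:int_cuts} and~\ref{thm:formula}. By Observation~\ref{obs:dimension} there is $\delta\in(0,1)$ with $B+\epsilon\bar A\in\T(B,\Y)\cap\S(B)$ for $0<\epsilon\le\delta$; for such $\epsilon$ the matrix $B+\epsilon\bar A$ defines a triangle close to $\M(B)$, with perturbed facets $F_1',F_2',F_3'$, with $I_{B+\epsilon\bar A}(r^j)=I_B(r^j)$, and with $Y(B+\epsilon\bar A)\subseteq Y(B)$. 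Since \eqref{eq:tilting-1} keeps $y^1$ on $F_1'$, $y^2$ on $F_2'$ and $y^3$ on $F_3'$, and since in a Type~2 triangle $F_1$ and $F_2$ carry only the integer points $y^1$ and $y^2$, the only integer points that can lie in $\intr\M(B+\epsilon\bar A)$ are points of $F_3\cap\Z^2\setminus\{y^3\}$.

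Next I would localize these ``bad'' points. Write $\bar a^3$ for the row of $\bar A$ belonging to $F_3$. If $\bar a^3=0$, then $\M(B+\epsilon\bar A)$ is already lattice-free and we are done; otherwise, because $\bar a^3\cdot(y^3-f)=0$ and $y^3-f$ is not parallel to $v(F_3)$ (as $f$ is interior), $\bar a^3\cdot v(F_3)\ne0$. Orient $v(F_3)$ so that $\bar a^3\cdot v(F_3)<0$; then the identity $a^3\cdot(y^3+k\,v(F_3)-f)=1+k\epsilon(\bar a^3\cdot v(F_3))$ (using $b^3\cdot v(F_3)=0$ and \eqref{eq:tilting-1}) shows that every integer point $y^3-k\,v(F_3)$, $k\ge1$, violates the $F_3'$-constraint, while the points $z_k:=y^3+k\,v(F_3)$, $k\ge1$, sit at distance $O(\epsilon)$ on the interior side of $F_3'$. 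Hence all bad points lie among $z_1,\dots,z_K$ for some $K\ge1$ (if there are none we are again done); they occupy an $O(\epsilon)$-thin strip just inside $F_3'$, and along $F_3$ they lie beyond $y^3+v(F_3)$. By hypothesis every ray intersection on $F_3$ lies strictly inside $(y^3-v(F_3),\,y^3+v(F_3))$, so — the rays being finite in number — for $\epsilon$ small these intersections stay separated by a fixed positive amount from $y^3+v(F_3)$ and from the vertex $v^\star:=F_2'\cap F_3'$ on that side (after relabeling $F_1,F_2$ if needed), and $v^\star$ is itself never a ray intersection.

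The heart of the argument is the construction of the extra edge. I would let $\ell$ be the line through a point of $F_3'$ lying strictly between the last ray intersection on $F_3'$ and $z_1$, and through a point of $F_2'$ at a small \emph{fixed} distance $\eta$ from $v^\star$; this cuts off a thin triangular corner $T$ at $v^\star$. One then checks, choosing $\eta$ small relative to the fixed quantities above (distance from $f$ to $F_3$, and the separations of the ray intersections from $v^\star$ and from $y^3+v(F_3)$) and then $\epsilon$ small relative to $\eta$, that: (i)~every $z_k$ lies on the $v^\star$-side of $\ell$, hence (being in $\M(B+\epsilon\bar A)$) in $T$; (ii)~no ray intersection lies in $T$, since those on $F_3'$ lie on the wrong side of $\ell$, those on $F_2'$ lie at distance more than $\eta$ from $v^\star$, and $T$ is disjoint from $F_1'$; and (iii)~$f\notin T$, since $f$ is at a fixed positive distance from $F_3$ whereas $T$ lies within $O(\eta)$ of $F_3$. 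Let $a^\star$ be the vector with $\ell=\{x:a^\star\cdot(x-f)=1\}$ (its orientation automatically puts $f$ on the $\le$ side) and set $B'=(b^1+\epsilon\bar a^1;\ b^2+\epsilon\bar a^2;\ b^3+\epsilon\bar a^3;\ a^\star)$.

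Finally, I would conclude by noting that $\M(B')=\M(B+\epsilon\bar A)\cap\{x:a^\star\cdot(x-f)\le1\}$ is a bounded polytope with $f$ in its interior; it is lattice-free, because its integer points lie in $Y(B)$, the points $y^1,y^2$ (and $y^3$, if still present) lie on facets, and by (i) all the $z_k$, together with the $y^3-k\,v(F_3)$, have been removed. Since no ray reaches the new edge, $\psi_{B'}(r^j)=\max\{\psi_{B+\epsilon\bar A}(r^j),\,a^\star\cdot r^j\}=\psi_{B+\epsilon\bar A}(r^j)$ for every $j$; by Theorem~\ref{thm:formula} $\psi_{\M(B')-f}=\psi_{B'}$, and by Theorem~\ref{thm:int_cuts} the inequality $\sum_j\psi_{\M(B')-f}(r^j)s_j\ge1$, i.e.\ $\sum_j\psi_{B+\epsilon\bar A}(r^j)s_j\ge1$, is valid for $\conv(\Rf)$; taking $\epsilon_1$ to be the minimum of the finitely many thresholds used above finishes the proof. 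I expect the whole corner-chop construction — step~(ii) above in particular — to be the main obstacle: the bad points lie $O(\epsilon)$-close to $F_3'$ yet are spread over a non-infinitesimal portion of it, so the new edge cannot be taken parallel to $F_3$ (that would also cut rays hitting $F_1'$ or $F_2'$ low down) and one is forced into a thin corner cut whose geometry must be controlled by cleanly separating the $\epsilon$-scale of the tilt from the fixed scales in the configuration.
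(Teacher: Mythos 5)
Your proposal is correct and follows essentially the same route as the paper: stay in $\T(B,\Y)\cap\S(B)$ via Observation~\ref{obs:dimension}, observe that only integer points of $F_3$ on one side of $y^3$ can enter the interior, and append a fourth edge that cuts them off without cutting any ray intersection (guaranteed by $P\cap F_3\subset(y^3-v(F_3),y^3+v(F_3))$), so the resulting lattice-free quadrilateral yields the same coefficients and hence validity. The only real difference is the construction of that edge: the paper simply pivots it through the integer point $y^4=y^3+v(F_3)$, choosing a halfspace containing $P\cup\{y^1,y^2,y^3\}$ once and for all and then shrinking $\epsilon$, which avoids your two-scale ($\eta$, then $\epsilon$) corner-chop argument.
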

The geometry of this lemma is illustrated in Figure~\ref{fig:add_edge}\,(a).

\begin{proof}
Recall that a lattice-free set containing $f$ in its interior yields a valid
inequality for $\conv(\Rf)$. We will construct $0 < \epsilon_1 < 1$ such that for every $0 < \epsilon \leq \epsilon_1$ there exists a matrix $C = (c^1; c^2; c^3)$ with three rows or a matrix $C = (c^1; c^2; c^3;c^4)$ with four rows, such that $\M(C)$ is a lattice-free set and $\psi_C(r^j) = \psi_{B + \epsilon \bar A}(r^j)$ for $j=1, \dots, k$. Of course, in the case when $C$ has four rows, the set $M(C)$ will contain an additional edge. 

By Observation~\ref{obs:dimension}, there exists $0 <\delta < 1$ such that $B+\epsilon \bar A \in \T(B,\Y)\cap \S(B)$ for all $0 < \epsilon \leq \delta$. 
%Consider any $0 < \epsilon \leq \epsilon_1$. 
From the definition of $\S(B)$ it follows that $M(B+\epsilon\bar A)\cap \Z^2\subseteq Y(B)$ for all $0 < \epsilon \leq \delta$. Since $Y_1 = \{y^1\}$ and $Y_2 =\{y^2\}$, $y^1$ and $y^2$ are not contained in $\intr(M(B+\epsilon\bar A))$. This implies that  $\intr(M(B+\epsilon\bar A))\cap \Z^2\subset F_3$. 

If $\intr(M(B+\epsilon\bar A))\cap \Z^2 = \emptyset$ for every $0 < \epsilon \leq \delta$, then
$M(B+\epsilon\bar A)$ is lattice-free for every such $\epsilon$. So we let $\epsilon_1 = \delta$ and let $C = B + \epsilon \bar
A$ for every $0 < \epsilon \leq \delta$ and we are done. 

Otherwise, let $0 < \epsilon' \leq \delta$ be such that $\intr(M(B+\epsilon'\bar A))\cap \Z^2 \neq\emptyset$. Let $y^4$ be the closest integer point on $F_3$
to~$y^3$ such that $y^4\in \intr(\M(B+\epsilon'\bar A))$. Note that one can
then assume $y^4=y^3 + v(F_3)$. Next, pick $c^4 \in \R^2$ such that  $c^4
\cdot (x - f) \leq 1$ is a halfspace containing $P \cup \{y^1, y^2, y^3\}$ and
such that $c^4\cdot (y^4 - f) = 1$.  This exists because there are only
finitely many ray intersections, $y^4$ is on the boundary, and $P \cap F_3
\subset \{\,y^4 + t (y^3 - y^4)\st t > 0\,\}$ since $P \cap F_3 \subset (y^3 -
v(F_3), y^3 + v(F_3))$.\smallbreak 

Consider the set $$\mathcal V := \{\,(a^1; a^2; a^3) \in\R^{3\times 2}\st a^i \cdot r^j > c^4
\cdot r^j \text{ for}\ j = 1, \dots, k,\  i \in I_B(r^j)\,\}.$$   
Since $\mathcal V$ is an open set containing $B$, there exists $0 < \epsilon_1 \leq \epsilon'$ such that $B + \epsilon \bar A  \in \mathcal V$ for every $0 < \epsilon \leq \epsilon_1$. For any $0 < \epsilon \leq \epsilon_1$, let $(c^1; c^2; c^3) = B + \epsilon \bar A$. Then $C = (c^1; c^2; c^3; c^4)$ has the property that $\M(C)$ is a lattice-free
quadrilateral. This is because $\epsilon \leq \delta$ implies
$\intr(M(B+\epsilon \bar A))\cap \Z^2\subset F_3$. But all these integer points
violate the inequality $c^4 \cdot (x - f) \leq 1$.  See Figure~\ref{fig:add_edge}\,(b).

Moreover, $\psi_C(r^j) = \psi_{B + \epsilon \bar A}(r^j)$ for $j=1, \dots, k$. This is because $I_C(r^j)  = I_B(r^j) = I_{B+\epsilon \bar A}(r^j)$ for all $j$; the first equality follows because $B + \epsilon \bar A \in \mathcal{V}$ and the second equality follows from the fact that $B+\epsilon\bar A\in \T(B,\Y)$, since $\epsilon \leq \delta$. 
\end{proof}

One can prove an analogous lemma for splits. Although the statement and the proof are very similar to Lemma~\ref{lemma:add_edge}, there are some subtle differences. For example, $\S(B)$ is not full-dimensional when $\M(B)$ is a split; Lemma~\ref{lemma: S(B) full-dimensional} applies only when $M(B)$ is bounded. Hence, more work needs to be done to create a lattice-free set in this case.

\begin{lemma}
\label{lemma:add_edge2}
Let $\M(B)$ be a split with $\#(\conv(P \cap F_1) \cap \Z^2) \leq
1$. Let $y^1 \in F_1\cap\Z^2$ such that $P \cap F_1 \subset (y^1 - v(F_1), y^1 + v(F_1))$.  Let $Y_1 = \{y^1\}$ and $Y_2 = \{y^2, y^3\}$, where $y^2, y^3$ are two arbitrary integer points on $F_2$. Suppose that $\dim \T(B,\Y) \geq 1$. 

For any $\bar A \in \mathcal{N}(B, \Y)\setminus\{0\}$, there exists $0 < \epsilon_1 < 1$ such that $\sum_{j=1}^k \psi_{B + \epsilon \bar A}(r^j)s_j\geq 1$ is a valid inequality for $\conv(\Rf)$ for every $0 < \epsilon \leq \epsilon_1$.
\end{lemma}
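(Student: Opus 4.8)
The plan is to mirror the structure of the proof of Lemma~\ref{lemma:add_edge}, replacing the compactness-based argument (Lemma~\ref{lemma: S(B) full-dimensional}) with a direct geometric construction, since $\S(B)$ fails to be full-dimensional for splits. As in the proof of Lemma~\ref{lemma:add_edge}, the goal is to produce, for each sufficiently small $\epsilon>0$, a matrix $C$ (with three or four rows) so that $\M(C)$ is a lattice-free set containing $f$ in its interior and $\psi_C(r^j)=\psi_{B+\epsilon\bar A}(r^j)$ for all $j$; Theorem~\ref{thm:int_cuts} then gives validity. Write $F_1, F_2$ for the two facets (parallel lines) of the split $\M(B)$, with $y^1\in\relint(F_1)$, and $y^2,y^3\in F_2$.

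First I would understand how tilting behaves for a split. Since $\dim\T(B,\Y)\ge 1$ by hypothesis, pick $\bar A=(\bar a^1;\bar a^2)\in\mathcal N(B,\Y)\setminus\{0\}$. The constraint~\eqref{eq:tilting-1} pins $a^2\cdot(y^2-f)=a^2\cdot(y^3-f)=1$ on two distinct integer points of $F_2$, which (together with the requirement that recession directions of the split remain recession directions, via~\eqref{eq:tilting-2}) forces $\bar a^2=0$; so only facet $F_1$ tilts. Thus $\M(B+\epsilon\bar A)$ is the region between the fixed line $F_2$ and a line through $y^1$ that has been rotated slightly. For small $\epsilon$, the constraint $a^2\cdot(x-f)\le 1$ is still present and the set $\M(B+\epsilon\bar A)=\{x: (b^1+\epsilon\bar a^1)\cdot(x-f)\le 1,\ b^2\cdot(x-f)\le 1\}$ is a (possibly unbounded) wedge or strip still containing $f$ in its interior, since $B\in\T(B,\Y)$ satisfies the strict inequalities~\eqref{eq:tilting-3} and these persist on a neighborhood. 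The key point is that the only integer points that can enter $\intr(\M(B+\epsilon\bar A))$ are those on or near $F_1$: the half containing $F_2$ is unchanged, and $y^1$ stays on the boundary. Using $P\cap F_1\subset(y^1-v(F_1),y^1+v(F_1))$ and that there are only finitely many ray intersections, I would argue that for $\epsilon$ small enough any such new integer point lies on the line $F_1$ itself, strictly beyond the ray intersections, at $y^4:=y^1+v(F_1)$ or $y^1-v(F_1)$.

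Next I would add the cutting edge exactly as in Lemma~\ref{lemma:add_edge}. If for all small $\epsilon$ no integer point enters the interior, set $C=B+\epsilon\bar A$ and we are done. Otherwise, choose the closest offending integer point $y^4=y^1+v(F_1)$ (say) and pick $c^4\in\R^2$ so that $c^4\cdot(x-f)\le 1$ contains $P\cup\{y^1,y^2,y^3\}$ while $c^4\cdot(y^4-f)=1$; this is possible because $P\cap F_1\subset\{y^4+t(y^1-y^4):t>0\}$. Then define $\mathcal V=\{(a^1;a^2)\in\R^{2\times2}: a^i\cdot r^j>c^4\cdot r^j\text{ for }j=1,\dots,k,\ i\in I_B(r^j)\}$, an open set containing $B$, take $\epsilon_1$ small enough that $B+\epsilon\bar A\in\mathcal V$, and set $(c^1;c^2)=B+\epsilon\bar A$, $C=(c^1;c^2;c^3)$ with $c^3:=c^4$. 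The set $\M(C)$ is then the wedge $\M(B+\epsilon\bar A)$ with the extra halfplane $c^4\cdot(x-f)\le 1$ cutting off exactly the stray interior integer points on $F_1$, hence lattice-free; and $I_C(r^j)=I_{B+\epsilon\bar A}(r^j)=I_B(r^j)$ because $B+\epsilon\bar A\in\mathcal V$ and $\in\T(B,\Y)$, so $\psi_C(r^j)=\psi_{B+\epsilon\bar A}(r^j)$ for all $j$.

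The main obstacle, and the place where this proof genuinely differs from Lemma~\ref{lemma:add_edge}, is establishing the analog of ``$\intr(\M(B+\epsilon\bar A))\cap\Z^2$ is controlled'' without the parametric-LP locally-bounded-solution-set theorem: since $\M(B+\epsilon\bar A)$ is unbounded, a priori infinitely many integer points could be swept into its interior as the facet $F_1$ tilts. I expect to resolve this by exploiting that the tilt fixes $y^1$ and fixes the recession cone (so the set remains a strip, not a genuine wedge, along the direction $v(F_1)$ — or if it becomes a wedge, it opens \emph{away} from the region being added), so that for $\epsilon\le\delta$ with $\delta$ small the symmetric difference $\M(B+\epsilon\bar A)\,\triangle\,\M(B)$ near $F_1$ is a thin sliver whose only lattice points are the two immediate neighbors $y^1\pm v(F_1)$ of $y^1$ on $F_1$; the finiteness of $P$ and the containment of $P\cap F_1$ in the open segment $(y^1-v(F_1),y^1+v(F_1))$ then guarantee these can be cut off by a single new edge without disturbing any ray intersection. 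Making this sliver argument precise — choosing $\delta$ uniformly so the tilted boundary line stays within a bounded neighborhood of $F_1$ on the relevant bounded portion while being harmless on the unbounded portion — is the technical heart of the lemma.
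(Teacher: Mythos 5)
There is a genuine gap at what you yourself call ``the technical heart of the lemma,'' and unfortunately it cannot be repaired the way you hope. When $F_1$ is tilted about $y^1$ while $F_2$ stays fixed, the set $\M(B+\epsilon\bar A)$ is an unbounded wedge for \emph{every} $\epsilon>0$: on the side where the tilted line moves away from $F_2$, the sliver $\M(B+\epsilon\bar A)\setminus\M(B)$ widens linearly and therefore contains infinitely many lattice points, no matter how small $\epsilon$ is --- including points far from $y^1$ and points not on the line containing $F_1$. Your proposed fix, that the tilt ``fixes the recession cone'' or that the wedge ``opens away from the region being added,'' is not available: constraint~\eqref{eq:tilting-2} only constrains rays $r^j$ with $|I_B(r^j)|=2$, i.e.\ rays of the problem lying in the recession cone of the split, and in the intended application (Lemma~\ref{lemma:split-conditions}) there are no such rays, so nothing in $\T(B,\Y)$ preserves the recession direction. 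Hence your claim that for small $\epsilon$ the only new interior lattice points are $y^1\pm v(F_1)$ is false, and with it the claim that the single added halfspace ``cuts off exactly the stray interior integer points on $F_1$''; lattice-freeness of $\M(C)$ is never established.

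The paper's proof takes a structurally different route precisely here. It accepts that $\M(B+\epsilon\bar A)$ always contains lattice points in its interior, always adds a third facet $c^3$ through $y^4=y^1+v(F_1)$, and --- crucially --- chooses the points on $F_2$ that $c^3$ must contain not to be the arbitrary $y^2,y^3$ but specially constructed $\hat y^2,\hat y^3\in F_2$ with $\hat y^2-y^1$ and $v(F_1)$ a lattice basis and $\hat y^3=\hat y^2+v(F_1)$. Lattice-freeness of the resulting triangle $\M(C)$ is then proved by a covering argument: $\M(C)\cap\M(B)$ is lattice-free because it sits inside the original split; $\M(C)\setminus\intr(\M(B))$ is lattice-free because it sits inside the split $S$ determined by the area-one parallelogram $y^1,y^4,\hat y^2,\hat y^3$; and the common boundary $[y^1,y^4]$ has no interior lattice points. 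Your proposal contains no substitute for this parallelogram/split argument, and merely requiring $c^3\cdot(x-f)\le 1$ to contain $P\cup\{y^1,y^2,y^3\}$ with equality at $y^4$ does not by itself exclude lattice points of the wedge from $\intr(\M(C))$. To complete your proof you would essentially have to reproduce this construction.
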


\begin{proof}
Similar to the proof of Lemma~\ref{lemma:add_edge}, we will construct $0 < \epsilon_1 < 1$ such that for every $0 < \epsilon < \epsilon_1$, there exists a matrix $C = (c^1; c^2; c^3)$ such that $\M(C)$ is a lattice-free set containing one additional edge (so $\M(C)$ is a triangle) and $\psi_C(r^j) = \psi_{B + \epsilon \bar A}(r^j)$ for $j=1, \dots, k$. 

First, since $B$ satisfies the strict inequalities in $\T(B,\Y)$, there exists $0 < \delta < 1$ such that $B + \epsilon\bar A \in \T(B,\Y)$ for every $0 <\epsilon \leq \delta$.

Observe that setting $Y_2=\{\y^2, y^3\}$ implies that $F_2$ is fixed as the equalities in $\T(B,\Y)$ corresponding to $y^2, y^3$ force $F_2$ to lie on the line passing through $y^2, y^3$. Therefore, for any $\bar A \in \mathcal{N}(B,\Y)\setminus\{0\}$, $F_1$ is tilted for $M(B+\bar A)$ and hence $\M(B+\bar A)$ will contain lattice points in its interior. Let $y^4$ be the closest integer point on $F_1$ to $y^1$ such that $y^4\in \intr(\M(B+\bar A))$.  Note that one can then assume $y^4=y^1 + v(F_1)$. Choose $\hat y^2, \hat y^3 \in M(B + \bar A)\cap F_2$ such that $\hat y^2 - y^1$ and $v(F_1)$ form a lattice basis for $\Z^2$ and $\hat y^3 = \hat y^2 + v(F_1)$. This can be done because the equality conditions in $\T(B,\Y)$ from $Y_2$ fix the side $F_2$ of $M(B)$ and so it remains parallel to $v(F_1)$. Next, pick $c^3 \in \R^2$ such that  $c^3 \cdot (x - f) \leq 1$ is a halfspace containing $P \cup \{y^1, \hat y^2, \hat y^3\}$ and such that $c^3\cdot (y^4-f) = 1$.  This exists because there are only finitely many ray intersections, $y^4$ is on the boundary, and $P \cap F^1 \subset \{\,y^4 + t (y^1 - y^4)\st t > 0\,\}$  since $P\cap F_1 \subset (y^1 - v(F_1), y^1 + v(F_1))$.\smallbreak

Consider the set $$\mathcal V := \{\,(a^1; a^2) \in\R^{2\times 2}\st a^i \cdot r^j > c^3
\cdot r^j \text{ for}\ j = 1, \dots, k,\  i \in I_B(r^j)\,\}.$$   
Since $\mathcal V$ is an open set containing $B$, there exists an $0 < \epsilon_1 \leq \delta$ such that $B + \epsilon \bar A  \in \mathcal V$ for every $0 < \epsilon \leq \epsilon_1$. For any such $\epsilon$, let $(c^1; c^2) = B + \epsilon \bar A$.

We show that $C = (c^1; c^2; c^3)$ has the property that $\M(C)$ is a lattice-free triangle. Let $S$ be the split defined by the line passing through $y^1, \hat y^2$ and the line passing through $y^4, \hat y^3$ (this defines a split because $\hat y^2, \hat y^3, y^1$ and $y^4$ form a parallelogram of area 1). Since $\M(C)\cap \M(B) \subseteq \M(B)$, $\M(C)\cap \M(B)$ is lattice-free. Also, $\M(C)\setminus\intr(\M(B)) \subseteq S$ and hence $\M(C)\setminus\M(B)$ is lattice-free. Moreover the boundary shared by these two sets $\M(C)\cap \M(B)$ and $\M(C)\setminus \intr(\M(B))$ is the line segment $[y^1, y^4]$, which contains no integer points in its relative interior. Therefore, $\M(C)$ is lattice-free.

Moreover, $\psi_C(r^j) = \psi_{B + \epsilon \bar A}(r^j)$ for $j=1, \dots, k$ because $I_C(r^j) = I_B(r^j) = I_{B+\epsilon \bar A}(r^j)$ for all $j$. The first equality follows because $B+\epsilon \bar A \in \mathcal{V}$ and the second equality is because $\epsilon \leq \delta$ and so $B + \epsilon\bar A \in \T(B, \Y)$.
\end{proof}

With the above lemma, the necessary conditions for splits are easy to show.
\begin{lemma}[Splits]\label{lemma:split-conditions}
Suppose $\M(B)$ is a split.  If $\sum_{j=1}^k\psi_B(r^j)s_j\geq 1$ is extreme, then one of the following holds: \\ 
\textbf{Case a.} $P \subset \Z^2$.\\
\textbf{Case b.} There exists $j \in \{1, \ldots, k\}$ such that $r^j$ lies in the recession cone of the split.\\
\textbf{Case c.} $\#(\conv(P\cap F_i) \cap \Z^2) \geq 2$ for at least one of $i=1$ or $i=2$.
\end{lemma}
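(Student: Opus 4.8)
The plan is to prove the contrapositive: assuming that none of Cases~a, b, c holds, I will exhibit $\sum_{j=1}^k\psi_B(r^j)s_j\geq 1$ as a proper convex combination of two \emph{distinct} valid inequalities, hence show it is not extreme. Since Case~b fails, no ray $r^j$ lies in the recession cone of the split, so $\psi_B(r^j)>0$ and $|I_B(r^j)|=1$ for every $j$; in particular $\M(B)$ has no corner rays and every ray intersection $p^j$ is well defined. Since Case~a fails, some $p^j\notin\Z^2$; after relabeling the two facets, assume that this $p^j$ lies on $F_1$, so $P\cap F_1$ is nonempty and not contained in $\Z^2$. Since Case~c fails for $i=1$, we have $\#(\conv(P\cap F_1)\cap\Z^2)\leq 1$, and a one-dimensional lattice argument on the line $\aff(F_1)$ (a segment meeting the coset lattice $\Z v(F_1)$ in at most one point sits strictly between two consecutive lattice points) lets me choose $y^1\in F_1\cap\Z^2$ with $P\cap F_1\subset(y^1-v(F_1),\,y^1+v(F_1))$.

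Next I set $\Y=(Y_1,Y_2)$ with $Y_1=\{y^1\}$ and $Y_2=\{y^2,y^3\}$, where $y^2,y^3$ are two distinct integer points on $F_2$ (these exist because the coefficient vector of a split is primitive). Because $\M(B)$ has no corner rays, constraint~\eqref{eq:tilting-2} contributes no equations, so $\T(B,\Y)$ is cut out, up to strict inequalities, by $a^1\cdot(y^1-f)=1$ together with $a^2\cdot(y^2-f)=1$ and $a^2\cdot(y^3-f)=1$. As $f\notin\aff(F_2)$, the vectors $y^2-f$ and $y^3-f$ form a basis of $\R^2$, so the latter two equations determine $a^2$ uniquely while $a^1$ retains one degree of freedom; thus $\dim\T(B,\Y)=\dim\mathcal N(B,\Y)=1$, and $\mathcal N(B,\Y)$ is spanned by some $\bar A=(\bar a^1;\bar a^2)$ with $\bar a^2=0$ and $\bar a^1\neq 0$, $\bar a^1\cdot(y^1-f)=0$.

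Now I invoke Lemma~\ref{lemma:add_edge2}: its hypotheses ($\M(B)$ a split, $\#(\conv(P\cap F_1)\cap\Z^2)\leq 1$, $P\cap F_1\subset(y^1-v(F_1),y^1+v(F_1))$, and $\dim\T(B,\Y)\geq 1$) all hold, so applying it to $\bar A$ and to $-\bar A$ produces $\epsilon>0$ such that both $\sum_{j=1}^k\psi_{B\pm\epsilon\bar A}(r^j)s_j\geq 1$ are valid for $\conv(\Rf)$; shrinking $\epsilon$ if necessary (using that $B$ satisfies the strict inequalities of $\T(B,\Y)$) we may also assume $B\pm\epsilon\bar A\in\T(B,\Y)$, so $I_{B\pm\epsilon\bar A}(r^j)=I_B(r^j)$ for all $j$. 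For the ray $r^j$ with $p^j\in F_1\setminus\Z^2$: since $p^j\neq y^1$ and $p^j\in\aff(F_1)$, the ray $r^j$ is not parallel to $y^1-f$, so $\bar a^1\cdot r^j\neq 0$ (because $\bar a^1$ spans the line orthogonal to $y^1-f$); as $I_{B\pm\epsilon\bar A}(r^j)=I_B(r^j)=\{1\}$, we get $\psi_{B+\epsilon\bar A}(r^j)=b^1\cdot r^j+\epsilon\bar a^1\cdot r^j\neq b^1\cdot r^j-\epsilon\bar a^1\cdot r^j=\psi_{B-\epsilon\bar A}(r^j)$, so the two valid inequalities are distinct. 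Finally, $B=\tfrac12(B+\epsilon\bar A)+\tfrac12(B-\epsilon\bar A)$ with both summands in $\T(B,\Y)$, so Lemma~\ref{lemma:convex_combination} gives $\psi_B(r^j)=\tfrac12\psi_{B+\epsilon\bar A}(r^j)+\tfrac12\psi_{B-\epsilon\bar A}(r^j)$ for all $j$, exhibiting $\sum_{j=1}^k\psi_B(r^j)s_j\geq 1$ as a convex combination of two distinct valid inequalities, hence not extreme.

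The only slightly delicate points are the lattice argument that selects $y^1$ and the degree-of-freedom count for $\T(B,\Y)$; both use in an essential way that Cases~b and~c fail (no corner rays, and $F_1$ carrying ray intersections spanning at most one lattice point). The genuinely new phenomenon relative to the triangle cases — namely that tilting a facet of a split drags lattice points into the interior, and that $\S(B)$ is not full-dimensional here — is exactly what Lemma~\ref{lemma:add_edge2} is built to absorb by inserting an extra edge, so the main obstacle amounts to verifying that lemma's hypotheses and the distinctness of the resulting inequalities, which is handled above.
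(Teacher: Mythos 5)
Your proposal is correct and follows essentially the same route as the paper's proof: the same choice of $\Y=(\{y^1\},\{y^2,y^3\})$, the same dimension count forcing $\bar a^2=0$ and $\bar a^1\neq 0$, the same invocation of Lemma~\ref{lemma:add_edge2} with $\pm\bar A$ to get validity, and the same use of a non-integer ray intersection on $F_1$ plus Lemma~\ref{lemma:convex_combination} to exhibit a proper convex combination of two distinct valid inequalities.
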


\begin{proof}
We suppose that we are not in Case a, Case b, or Case c and show that $\sum_{j=1}^k\psi_B(r^j)s_j\geq 1$ is not extreme. So we suppose, possibly by exchanging the labels on $F_1$ and $F_2$, that $F_1 \cap P \setminus \Z^2 \neq \emptyset$, no ray in $\{r^1, \ldots, r^k\}$ lies in the recession cone of the split, and
$\#(\conv(P\cap F_1) \cap \Z^2) \leq 1$.  

Let $y^1 \in F_1$ such that $P \cap F_1 \subset (y^1 - v(F_1), y^1 + v(F_1))$. Choose any $y^2, y^3 \in F_2 \cap
\Z^2$.  Let $Y_1 = \{ y^1\}, Y_2 = \{y^2, y^3\}$. Note that since we assumed that no ray lies in the recession cone, we have $|I_B(r^j)| = 1$, for every $j = 1, \ldots, k$. Hence, there are no equalities in $\T(B, \Y)$ for $I_B(r^j)$. Then $\dim \T(B, \Y) \geq 4
- 3 = 1$. Pick any $\bar A\in \mathcal{N}(B,\Y)\setminus\{0\}$. 

Notice that the equalities defining $\T(B,\Y)$ corresponding to
$y^2$ and $y^3$ fix $F_2$ completely because they force it to be the line
going through $y^2$ and $y^3$. In other words, $\bar a^2 = 0$. Therefore $\bar a^1 \neq 0$.

Since $B$ satisfies the strict inequalities of $\T(B,\Y)$, there exists $\delta > 0$ such that $B \pm \epsilon \bar A \in \T(B,\Y)$ for all $0 <\epsilon \leq \delta$, implying (amongst other things) that $I_{B\pm\epsilon\bar A}(r^j) = I_B(r^j)$ for all $j= 1, \ldots, k$. Using Lemma~\ref{lemma:add_edge2} with $\bar A$, we know that there exists
an $0 < \epsilon_1 < 1$ such that $\sum_{j=1}^k\psi_{B+\epsilon \bar A}(r^j)s_j \geq 1$ is a valid inequality for every $0 < \epsilon \leq \epsilon_1$. Similarly, using Lemma~\ref{lemma:add_edge2} with $-\bar A$, there exists
an $0 < \epsilon_2 < 1$ such that $\sum_{j=1}^k\psi_{B-\epsilon \bar A}(r^j)s_j \geq 1$ is a valid inequality for every $0 < \epsilon \leq \epsilon_2$. Let $\epsilon = \min\{\delta, \epsilon_1, \epsilon_2\}$. Thus, $\sum_{j=1}^k\psi_{B\pm \epsilon \bar A}(r^j) \geq 1$ are both valid inequalities. 

Since $\bar A \in \mathcal{N}(B,\Y)$, $\bar a^1\cdot (y^1-f) = 0$. Since $F_1 \cap P \setminus\Z^2 \neq \emptyset$, there exists $r^j$ with $I_B(r^j) = \{1\}$ and $p^j \not\in \Z^2$ and so $r^j$ and $y^1-f$ are linearly independent. This implies that $\bar a^1\cdot r^j \neq 0$ since $\bar a^1\cdot y^1 = 0$ and $\bar a^1 \neq 0$. Hence, $\psi_{B+\epsilon \bar A}(r^j) = (b^1 + \epsilon\bar a^1)\cdot r^j \neq (b^1 - \epsilon\bar a^1)\cdot r^j = \psi_{B-\epsilon \bar A}(r^j)$. The equalities follow because $\epsilon \leq \delta$ and so $I_{B\pm\epsilon\bar A}(r^j) = I_B(r^j) = \{1\}$. Moreover, since $B \pm \epsilon \bar A \in \T(B,\Y)$, Lemma~\ref{lemma:convex_combination} implies that $\sum_{j=1}^k\psi_B(r^j) \geq 1$ is a convex combination of the two valid inequalities $\sum_{j=1}^k\psi_{B\pm \epsilon \bar A}(r^j) \geq 1$. Hence, we have shown that $\sum_{j=1}^k\psi_B(r^j)s_j\geq 1$ is not extreme by using two Type 2 triangles (note that the triangle $M(C)$ in the proof of Lemma~\ref{lemma:add_edge2} is a Type 2 triangle).  
\end{proof}

\begin{figure}
\centering

\scalebox{0.85}{%
\ifpdf
\input{figures/figureType2Cases.pdftex_t}% \input{#1.pdf_t}
\else
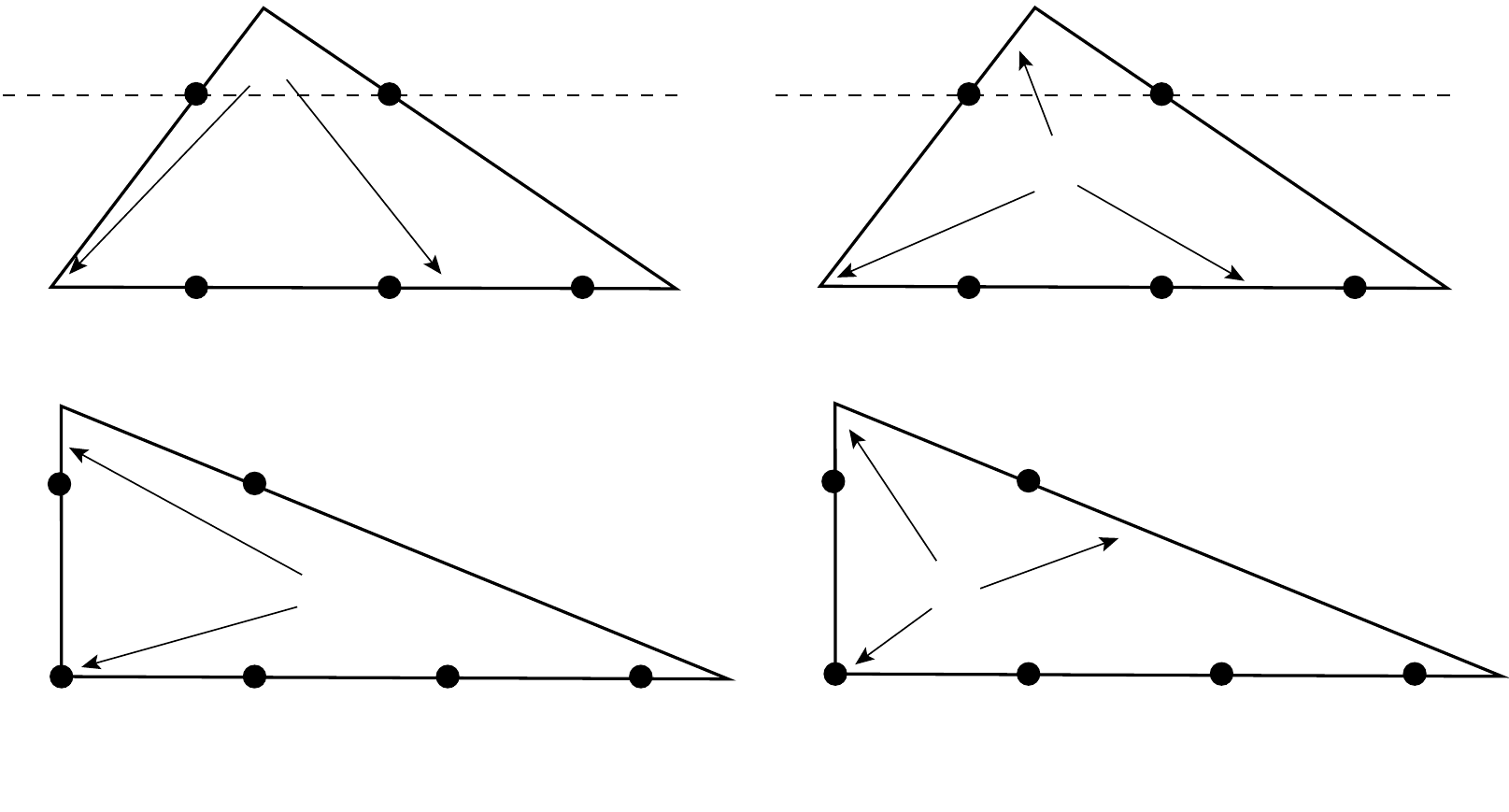
\fi}
\caption{Cases of extreme Type 2 triangles in Lemma~\ref{lemma:type 2 conditions}.}
\label{fig:Type2Cases}
\end{figure}

\begin{lemma}[Type 2 Triangles]\label{lemma:type 2 conditions}
Let $\M(B)$ be a Type 2 triangle with facets $F_1, F_2, F_3$ where $F_3$ is the facet containing multiple integer points.  Let $y^1, y^2$ be the unique integer points on the relative interiors of $F_1$ and $F_2$, respectively.  \\
 If $\sum_{j=1}^k\psi_B(r^j)s_j\geq 1$ is extreme and not dominated or realized by a split inequality, then one of the following holds:\\
\textbf{Case a.} $P \subset \Z^2$.\\
\textbf{Case b.} There exist $p^1, p^2 \in P \cap F_3$ with $\#([p^1,p^2] \cap
\Z^2) \geq 2$, and there exists a matrix $B'$ such that $M(B')$ is a Type 2
triangle, $\psi_{B'}(r^j) = \psi_B(r^j)$ for all $j = 1, \ldots, k$, and has
at least one of $p^1$ or $p^2$ in $\verts(B')$.  If there exist
non-integer-pointing rays on the relative interior of both $F_1, F_2$, then
there exist two corner rays.  Also, one of the following holds:\\ 
\indent \textbf{Case b\oldstylenums{1}.} $f \notin S_3$.\\
\indent \textbf{Case b\oldstylenums{2}.} $f \in S_3$ and $P \not\subset F_3$.\\
\textbf{Case c.}  There exist $p^1, p^2 \in P\cap F_i$ with $i=1$ or $i=2$, with $\#([p^1,p^2] \cap \Z^2) \geq 2$, such that $p^1 \in F_3 \cap \Z^2$ and if $P\setminus (F_i \cup F_3 \cup \Z^2) \neq \emptyset$, then $p^2$ can be taken to be a corner ray.
Also, one of the following holds:\\
\indent \textbf{Case c\oldstylenums{1}.} $f \notin S_i$.\\
\indent\textbf{Case c\oldstylenums{2}.} $f \in S_i$ and $P \not\subset S_i$.  
\end{lemma}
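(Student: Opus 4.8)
The plan is to assume $\sum_{j=1}^k\psi_B(r^j)s_j\ge 1$ is extreme and is neither dominated nor realized by a split inequality, and to deduce one of Cases~a--c by a case analysis on the location of the non-integral ray intersections of $P$ relative to $F_1,F_2,F_3$; in every branch in which none of the stated conclusions holds, I would exhibit a strict convex combination of two valid inequalities equal to $\sum_{j=1}^k\psi_B(r^j)s_j\ge 1$ (contradicting extremeness), or else show that $\sum_{j=1}^k\psi_B(r^j)s_j\ge 1$ is realized or dominated by a split inequality (which the hypothesis forbids). If $P\subseteq\Z^2$ we are in Case~a, so assume not; write $v_{12}=F_1\cap F_2$ for the fractional vertex (present since $\M(B)$ is Type~2), $v_{13}=F_1\cap F_3$, $v_{23}=F_2\cap F_3$, and let $y^1,y^2$ be the unique integral points in $\relint(F_1),\relint(F_2)$. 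Throughout, the relevant tilting tuple is $\Y=(\{y^1\},\{y^2\},\{y^3\})$ for a suitable $y^3\in F_3\cap\Z^2$; this $\Y$ is \emph{not} a covering of $Y(B)$, so tilts that move the line through~$F_3$ must be turned into lattice-free sets through Lemma~\ref{lemma:add_edge}, whereas tilts that hold that line fixed keep every integral point of~$F_3$ on the boundary (and $y^1,y^2$ on $F_1,F_2$) and hence are automatically lattice-free.

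\textbf{The case $\#(\conv(P\cap F_3)\cap\Z^2)\ge 2$ (leading to Case~b).} Choosing $p^1,p^2$ to be the extreme points of $P\cap F_3$ gives $\#([p^1,p^2]\cap\Z^2)\ge 2$, the first assertion of Case~b. For the matrix $B'$ I would argue contrapositively: if $v_{13}$ is not already a ray intersection, then (no corner ray at $v_{13}$ being present) the line through $F_3$ can be held fixed while $F_1$ is rotated about $y^1$ through a nondegenerate interval of matrices in $\T(B,\Y)$, and one follows this interval until $v_{13}$ first meets a ray intersection on $F_3$; the resulting matrix is the required $B'$, provided the set is still a Type~2 triangle, and if instead the combinatorial type would change earlier, a matrix strictly inside the interval already realizes $\sum_{j=1}^k\psi_B(r^j)s_j\ge 1$ by a set of a different type, which inspection shows can only be a split or a quadrilateral --- the former being excluded by hypothesis and the latter reducing the situation to the quadrilateral results. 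The two--corner--ray claim follows from the same circle of ideas: if there are non-integral ray intersections on $\relint(F_1)$ \emph{and} on $\relint(F_2)$ but at most one corner ray, then the subspace of $\mathcal N(B,\Y)$ with $\bar a^3=0$ is still nonzero, and, by the linear-independence bookkeeping in the proof of Lemma~\ref{lemma:corner_rays_almost}, carries a direction $\bar A$ with $\bar a^1\ne 0$ or $\bar a^2\ne 0$; a non-integral ray intersection on $\relint(F_1)$ off $y^1$ (or on $\relint(F_2)$ off $y^2$) then has a strictly changing coefficient under $B\pm\epsilon\bar A$, so by Observation~\ref{obs:dimension} and the lattice-freeness noted above, together with Lemma~\ref{lemma:convex_combination}, the inequality is a strict convex combination of two valid ones --- contradiction. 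Finally, if $f\in S_3$ and $P\subseteq F_3$, then all ray intersections lie on $F_3\subseteq\bd S_3$ and $\sum_{j=1}^k\psi_B(r^j)s_j\ge 1$ is realized by the split inequality of $S_3$, which is excluded; hence Case~b1 ($f\notin S_3$) or Case~b2 ($f\in S_3$ and $P\not\subseteq F_3$).

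\textbf{The case $\#(\conv(P\cap F_3)\cap\Z^2)\le 1$ (leading to Case~c).} Here one can pick $y^3\in F_3\cap\Z^2$ with $P\cap F_3\subset(y^3-v(F_3),\,y^3+v(F_3))$, so Lemma~\ref{lemma:add_edge} applies. Since $P\not\subseteq\Z^2$, some non-integral ray intersection lies on $\relint(F_1)$, on $\relint(F_2)$, or at a vertex. Applying Lemma~\ref{lemma:simple_tilts} to $F_1$ and, symmetrically, to $F_2$: if $\relint(F_i)\cap P\setminus\Z^2\ne\emptyset$ while no ray intersection lies at a vertex of $F_i$, the inequality is not extreme. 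So a non-integral ray intersection on $\relint(F_1)$ must coexist with a ray intersection at a vertex of $F_1$; since $v_{12}$ is fractional, I would trace the consequences and, in every configuration in which $F_1$ is not thereby ``locked'' (i.e., $a^1$ is not pinned by the constraint at $y^1$ together with a corner ray on $F_1$), use Lemmas~\ref{lemma:add_edge} and~\ref{lemma:convex_combination} --- letting $F_3$ rotate about $y^3$ and repairing with the extra edge --- to write $\sum_{j=1}^k\psi_B(r^j)s_j\ge 1$ as a strict convex combination of two valid inequalities; the configuration surviving this is exactly $p^1=v_{13}\in F_3\cap\Z^2$ together with a ray intersection $p^2$ on $\relint(F_1)$ lying on the $v_{12}$-side of $y^1$, giving $\#([p^1,p^2]\cap\Z^2)\ge 2$. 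The refinement that $p^2$ may be taken to be a corner ray when $P\setminus(F_i\cup F_3\cup\Z^2)\ne\emptyset$ comes from the same device of rotating a vertex out to a ray as in Case~b. Lastly, as in Step~2 of the proof of Lemma~\ref{lem:Type1}, if $f\in S_i$ and $P\subseteq S_i$ then the split inequality of $S_i$ dominates or realizes $\sum_{j=1}^k\psi_B(r^j)s_j\ge 1$, which is excluded, so Case~c1 ($f\notin S_i$) or Case~c2 ($f\in S_i$ and $P\not\subseteq S_i$) holds.

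\textbf{Main obstacle.} The difficulty is organizational rather than a single hard estimate: one must keep careful track of which facets remain tiltable (a facet is ``locked'' precisely when its relative-interior integral point and a corner ray on it jointly pin its defining row, which can force $\dim\T(B,\Y)=0$), and must check in every branch that the perturbed sets $\M(B\pm\epsilon\bar A)$ --- after adjoining an edge via Lemma~\ref{lemma:add_edge} whenever $F_3$ is allowed to rotate --- are lattice-free and give inequalities not dominated by a split, so that they are genuine witnesses to non-extremeness. The most delicate single point is the first part of Case~b: verifying that exhausting the admissible rotation interval lands one on a \emph{Type~2} triangle $B'$ with $p^1$ or $p^2$ among its vertices, rather than on a set whose combinatorial type has changed en route; managing this, and separately handling rays that point to the fractional vertex $v_{12}$, is where the exclusion of split-realizable and split-dominated inequalities is most heavily used.
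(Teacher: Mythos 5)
Your overall architecture is the same as the paper's (tilting-space perturbations, simple tilts via Lemma~\ref{lemma:simple_tilts}, Lemma~\ref{lemma:corner_rays_almost}, the edge-repair Lemma~\ref{lemma:add_edge}, Lemma~\ref{lemma:convex_combination}, and split domination to split off Cases b\oldstylenums{1}/b\oldstylenums{2} and c\oldstylenums{1}/c\oldstylenums{2}), but the construction of $B'$ in your Case~b branch has a genuine gap. Rotating $F_1$ about $y^1$ while holding $F_2$ and the line of $F_3$ produces a matrix with $\psi_{B'}(r^j)=\psi_B(r^j)$ for all $j$ only if no ray points to a non-integral point of $\relint(F_1)$ and no corner ray sits at the fractional vertex $F_1\cap F_2$: in the first situation the rotation changes coefficients, so the endpoint is not ``the required $B'$''; in the second, the constraints $a^1\cdot(y^1-f)=1$ and $a^1\cdot \hat r=b^2\cdot \hat r$ pin $a^1$, so your ``nondegenerate interval'' in $\T(B,\Y)$ does not exist at all. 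You never dispose of these configurations inside the Case~b branch. The paper does: non-integral intersections in $\relint(F_i)$ without a corner ray are eliminated by simple tilts, and the configuration with a corner ray $\hat r$ at $F_1\cap F_2$ and $p^1,p^2\notin\verts(B)$ needs a separate argument in which $F_3$ is frozen by taking $Y_3=F_3\cap Y(B)$ (rank $2$), giving $\dim\mathcal N(B,\Y)=6-5=1$ and a changing coefficient at $\hat r$, hence non-extremeness; your ``$\bar a^3=0$'' argument is close in spirit but you invoke it only under the hypothesis of non-integral intersections on both $\relint(F_1)$ and $\relint(F_2)$, for the two-corner-ray claim, so this case is left uncovered. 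Your fallback --- ``if the combinatorial type changes en route, an interior matrix realizes the inequality by a split or a quadrilateral'' --- is also not tenable: a three-row family $\M(A)$ remains a triangle, interior matrices fail to realize the original inequality exactly in the problematic configurations above, and a split arises only as a degenerate limit outside the interval. The paper instead makes the move discrete (replace $F_1$ by the line through $y^1$ and $p^1$, or $F_2$ by the line through $y^2$ and $p^2$), notes that at most one of these two choices cuts off $f$, and proves lattice-freeness of the resulting triangle by a split-containment argument.

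A secondary weakness is that the elimination underlying your Case~c branch is asserted rather than argued (``trace the consequences''). The content the paper supplies there (its Step~1) is: when no pair of ray intersections traps two lattice points, choose $y^3\in F_3$ with $P\cap F_3\subset(y^3-v(F_3),y^3+v(F_3))$ and distinguish whether $y^3\in\verts(B)$ (then either a simple tilt of $F_2$ applies, or $P\cup\{f\}$ lies in a split, contradicting the hypothesis) or $y^3\in\relint(F_3)$ (then Lemma~\ref{lemma:corner_rays_almost} combined with Lemma~\ref{lemma:add_edge} applied to $\pm\bar A$ exhibits the inequality as a convex combination of two valid inequalities). Without this, and without the repairs above in Case~b, the proposal is a plausible outline in the paper's spirit but not yet a proof.
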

The cases of the lemma are illustrated in Figure~\ref{fig:Type2Cases}.

\begin{proof}

\noindent \textbf{Step 1.}
Suppose $P \not\subset \Z^2$ and there do not exist $p^1, p^2 \in P$ such that
$\#([p^1,p^2] \cap \Z^2) \geq 2$.  We will show that $\sum_{j=1}^k\psi_B(r^j)s_j\geq 1$ is then not
extreme.\smallbreak

First note that there is at most one corner ray in $F_3$ because there are
multiple integer points on~$F_3$.   Let $y^3 \in F_3$ such that $P \cap F_3
\subset (y^3 - v(F_3), y^3 + v(F_3))$.  Let $Y_i = \{y^i\}$.\smallbreak

\begin{figure}
\centering

\scalebox{0.85}{%
\ifpdf
\input{figures/figureType2Step1.pdftex_t}% \input{#1.pdf_t}
\else
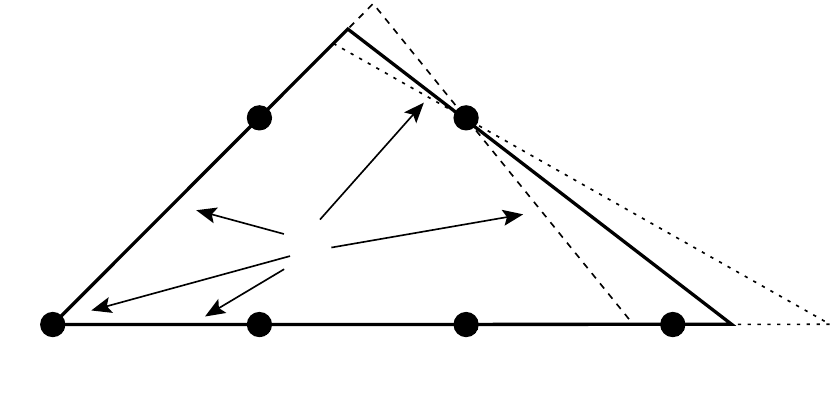
\fi}
\caption{In the proof of Lemma~\ref{lemma:type 2 conditions}, Step~1, a simple
  tilt from Lemma \ref{lemma:simple_tilts} shows that $\sum_{j=1}^k\psi_B(r^j)s_j\geq 1$ is not
  extreme.}
\label{fig:Type2Step1}
\end{figure}
Suppose first that $y^3 \in \verts(B) \cap P$ and, without loss of generality,
$y^3 \in F_1 \cap F_3$. Note that this implies that there are no corner rays on $F_2$, because $\#([p^1, p^2] \cap \Z^2) \leq 1$ and so $P \cap F_2 \subset \relint(F_2)$. If $P \cap F_2 \setminus \Z^2 \neq \emptyset$, then a
simple tilt from Lemma \ref{lemma:simple_tilts} shows that $\sum_{j=1}^k\psi_B(r^j)s_j\geq 1$ is not
extreme, as shown in Figure \ref{fig:Type2Step1}. If instead $P \cap F_2
\setminus \Z^2 = \emptyset$, then $P\subset \conv(\{y^1, y^2, y^3, y^4\})$, 
where $y^4$ is the integer point adjacent to~$y^3$ on~$F_3$,
since no two elements of $P$ contain two integer points between them.  Hence,
$P\cup \{f\} \subset S_i$ for either $i=1$ or $3$, and hence $\sum_{j=1}^k\psi_B(r^j)s_j\geq 1$ is
dominated by the inequality derived from $S_i$, contradicting the hypothesis of this
lemma.\smallbreak 

Suppose now that $y^3 \in \relint(F_3)$.  Since there are at most 2 corner
rays, Lemma~\ref{lemma:corner_rays_almost} shows that there exists $\bar A\in
\mathcal{N}(B,\Y)  \setminus\{0\}$ such that for every $0 < \epsilon < 1$, $\psi_{B+\epsilon \bar A}(r^j) \neq
\psi_{B-\epsilon \bar A}(r^j)$ for some $j = 1, \dots, k$ and $\psi_B(r^j) = \frac{1}{2} \psi_{B-\epsilon\bar A}(r^j) +
\frac{1}{2} \psi_{B+\epsilon \bar A}(r^j)$ for every $j = 1, \ldots, k$. If we pick $\epsilon$ arbitrarily, it is possible that
$M(B+\epsilon\bar A)$ or $M(B-\epsilon\bar A)$ is not lattice-free. However, using Lemma~\ref{lemma:add_edge} with $\bar A$ and $-\bar A$, we know that there exist $0 < \epsilon_1 < 1$ and $0 < \epsilon_2 < 1$ such that for $\epsilon = \min\{\epsilon_1, \epsilon_2\}$, both the inequalities
$\sum_{j=1}^k \psi_{B \pm \epsilon \bar A}(r^j)s_j\geq 1$ are valid for
$\conv(\Rf)$. Therefore $\sum_{j=1}^k\psi_B(r^j)s_j\geq 1$ is not extreme.

We comment here that, due to Lemma \ref{lemma:add_edge}, we may be using inequalities that come from quadrilaterals to show that $\sum_{j=1}^k\psi_B(r^j)s_j\geq 1$ is not extreme.

\smallbreak

Therefore, if $\sum_{j=1}^k\psi_B(r^j)s_j\geq 1$ is extreme, we are either in Case a with $P\subset \Z^2$, or there exist  $p^1, p^2 \in P$ with $\#([p^1,p^2] \cap \Z^2) \geq 2$.  In the latter case, we now show that we must be in either Case b\oldstylenums{1}, b\oldstylenums{2}, c\oldstylenums{1}, or c\oldstylenums{2}.\medbreak

\noindent \textbf{Step 2.}\ Suppose $P \not\subset \Z^2$ and there exist  $p^1, p^2 \in P \cap F_3$ with $\#([p^1,p^2] \cap \Z^2) \geq 2$. Without loss of generality, we label $p^1,p^2$ such that $P \cap F_3 \subset [p^1,p^2]$.\smallbreak

\begin{figure}
\centering

\scalebox{0.85}{%
\ifpdf
\input{figures/figureType2Step2aNew.pdftex_t}% \input{#1.pdf_t}
\else
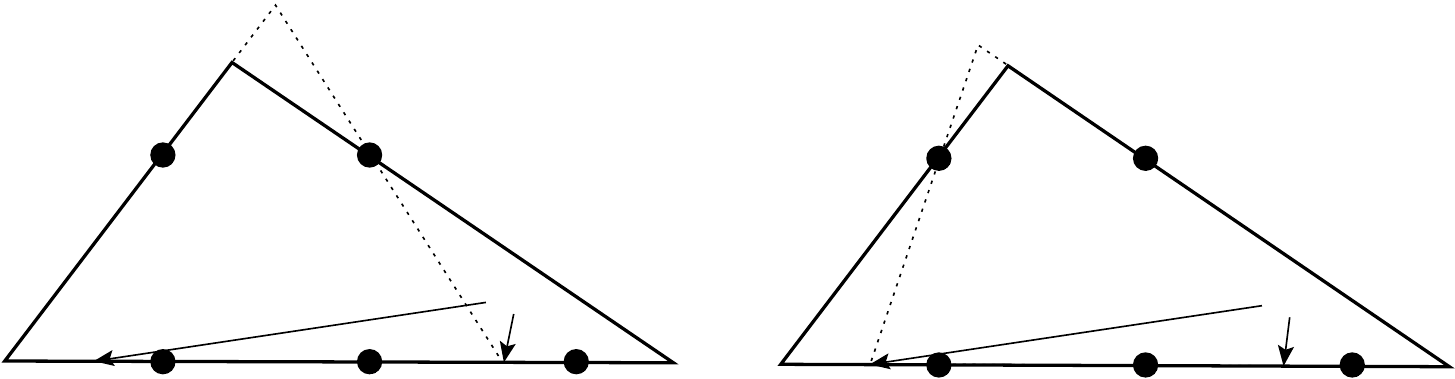
\fi}
\caption{In the proof of Lemma~\ref{lemma:type 2 conditions}, Step~2a, either
  $F_1$ or $F_2$ is tilted to give a new triangle~$M(B')$ (dotted).   
  (a)~Here $F_2$ cannot be used because tilting would remove $f$ from the
  interior.
  (b)~Instead, $F_1$ needs to be used.}
\label{fig:type2-step2a}
\end{figure}
\textbf{Step 2a.}\ We will show that there exists a matrix $B'$ such that $M(B')$ is a lattice-free Type 2 triangle that has at least one corner ray in $F_3$, and $\psi_{B'}(r^j) = \psi_B(r^j)$ for all $j = 1, \ldots, k$.\smallbreak

If either $p^1$ or $p^2$ is a vertex of $M(B)$, then we let $B' = B$ and move to Step 2b. We now deal with the case that $p^1 \not\in \verts(B)$ and $p^2 \not\in \verts(B)$.

Suppose there exists $\hat r \in \{r^1, \ldots, r^k\}$ such that $\hat p \in F_1 \cap F_2$, i.e., $\hat r$ is a corner ray on $F_1$ and $F_2$. We now make a tilting space argument to argue that $\sum_{j=1}^k\psi_B(r^j)s_j \geq 1$ is not extreme. We define $\Y = (Y_1, Y_2, Y_3)$ as $Y_1 = \{y^1\}$, $Y_2 = \{y^2\}$ and $Y_3 = F_3 \cap Y(B)$. Hence, $\Y$ is a covering of $Y(B)$. Since there is only one corner ray ($p^1 \not\in \verts(B)$ and $p^2 \not\in \verts(B)$), only one equation in $\mathcal{N}(B,\Y)$ comes from a corner ray condition. $Y_1$ and $Y_2$ each contribute one equation. $Y_3$ contributes a system of equalities involving $a^3$ with rank $2$. Therefore, $\dim \mathcal{N}(B,\Y) = 6 - 5 = 1$. We pick any $\bar A \in \mathcal{N}(B,\Y)\setminus\{0\}$.  From Observation~\ref{obs:dimension} and Observation~\ref{obs:latticefree}, there exists $\epsilon > 0$ such that $\sum_{i=1}^k\psi_{B \pm \epsilon \bar A}(r^j)s_j \geq 1$ are both valid inequalities and Lemma~\ref{lemma:convex_combination} implies that $\sum_{j=1}^k\psi_B(r^j)s_j \geq 1$ is a convex combination of these two valid inequalities. We now show that $\psi_{B+ \epsilon \bar A}(\hat r) \neq \psi_{B-\epsilon \bar A}(\hat r)$. Note that the equations from $Y_3$ impose that $\bar a^3 = 0$. Therefore, either $\bar a^1 \neq 0$ or $\bar a^2 \neq 0$. Without loss of generality, assume $\bar a^1 \neq 0$. Observe now that $y^1 - f$ and $\hat r$ are linearly independent since $y^1$ is in the relative interior of $F_1$ and $\hat p$ is a vertex of $F_1$. Since $Y_1$ imposes $\bar a^1\cdot (y^1 - f)=0$, this implies that $\bar a^1\cdot \hat r \neq 0$. Therefore, $\psi_{B+ \epsilon \bar A}(\hat r) = (b^1 + \epsilon \bar a^1 )\cdot \hat r\neq (b^1 - \epsilon \bar a^1 )\cdot\hat r = \psi_{B-\epsilon \bar A}(\hat r)$; the equalities follow from the fact that $B \pm \epsilon \bar A \in \T(B,\Y)$ implying that $I_{B\pm\epsilon\bar A}(\hat r) = I_B(\hat r)$.

So we can assume that $p^1 \not\in \verts(B)$, $p^2 \not\in \verts(B)$ and $F_1 \cap F_2 \not\in P$, i.e., there is no corner ray in $M(B)$. Since $F_1$ and $F_2$ do not have corner rays, then we must have $\relint(F_i)  \cap P\setminus\Z^2 = \emptyset$ for $i=1,2$ because otherwise Lemma \ref{lemma:simple_tilts} shows that $\sum_{j=1}^k\psi_B(r^j)s_j\geq 1$ is not extreme, by a simple tilt of $F_1$ or $F_2$. 
% This is because if there did exist such a point,
%then we could consider the tilting space intersected with $U = \{(a_1; a_2;
%a_3) : a_i = b_i, i = 2,3\}$, and then $\dim \T(B) \cap U  = 1$, and therefore
%$\psi_B$ would not be extreme.
For $i=1, 2$, since $\relint(F_i)  \cap (P \setminus\Z^2) = \emptyset$, changing $F_i$ to
now lie on the line through $p^i$ and $y^i$ does not change $\sum_{j=1}^k\psi_B(r^j)s_j\geq 1$, unless
$f$~is no longer in the interior of the set. At most one of these facet tilts puts $f$ outside the perturbed set, thus at
least one of them is possible.  This is illustrated in
Figure~\ref{fig:type2-step2a}. Without loss of generality, we assume that the
tilt of facet~$F_1$ is possible.  Let the set after tilting be $\M(B')$ and
$B'$ be the corresponding matrix. 

We claim that $\M(B')$ is lattice-free.  To
see this, let $y^3, y^4\in [p^1, p^2] \cap \Z^2$ be distinct integer points
adjacent to each other.  Then consider the split $S$ with facets through $[y^3, y^1]$ and
$[y^4, y^2]$.  Since $[y^3, y^4] \subset [p^1, F_1 \cap F_3]$ is a strict
subset, the new intersection at $F_1 \cap F_2$ is a subset of the split, and
hence $M(B') \setminus \M(B) \subset S$, and therefore no new integer points
are introduced. \smallbreak 
%Applying this same argument to $p^2$, we could change $F_2$, unless this makes
%$f$ no longer in the interior.  

\textbf{Step 2b.}
Suppose now that $p^1 \in F_1 \cap F_3$ and there exists a point $p\in \relint(F_2) \setminus \Z^2$.  If there are no corner rays on~$F_2$, then Lemma \ref{lemma:simple_tilts} shows that $\sum_{j=1}^k\psi_B(r^j)s_j\geq 1$ is not extreme.  Therefore the conditions of Case b are met.  
If $P\cup \{f\}\subset S_3$  then $\sum_{j=1}^k\psi_B(r^j)s_j\geq 1$ is dominated or realized by the split inequality from $S_3$, hence either Case b\oldstylenums{1} or Case b\oldstylenums{2} occurs.\medbreak

\noindent\textbf{Step 3.}\ Suppose $P \not\subset \Z^2$ and there exist  $p^1,
p^2 \in P \cap F_i$ with $\#([p^1,p^2] \cap \Z^2) \geq 2$, for $i=1$ or $i=2$.
Without loss of generality, we assume that $i=1$.  In order for $\#([p^1,p^2]
\cap \Z^2) \geq 2$, it has to equal exactly two, and one of the points, say
$p^1$, must lie in $p^1 \in F_1 \cap F_3 \cap \Z^2$.  Thus, $p^1$ is the corner
ray.

\begin{figure}
\centering

\scalebox{0.85}{%
\ifpdf
\input{figures/figureType2Step3.pdftex_t}% \input{#1.pdf_t}
\else
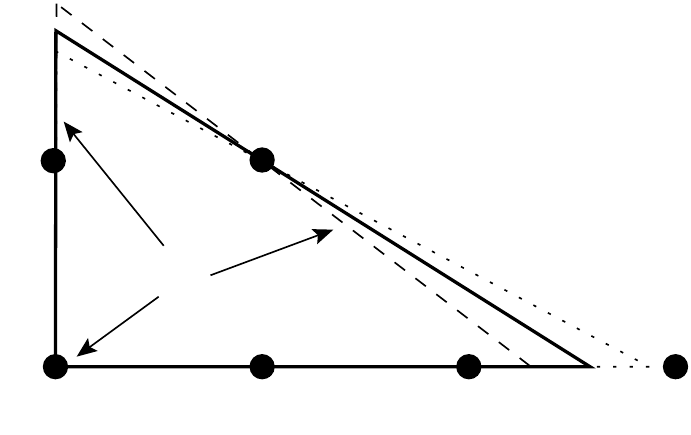
\fi}
\caption{In the proof of Lemma~\ref{lemma:type 2 conditions}, Step~3, a simple
  tilting argument (Lemma \ref{lemma:simple_tilts}) shows that the inequality
  is not extreme.}
\label{fig:Type2Step3}
\end{figure}
If there exists a point $p \in \relint(F_2) \setminus \Z^2$, then again, there
must be a corner ray on~$F_2$; otherwise, Lemma \ref{lemma:simple_tilts} shows
that $\sum_{j=1}^k\psi_B(r^j)s_j\geq 1$ is not extreme.  See Figure~\ref{fig:Type2Step3}. Since we are not in Case b, this must be the corner ray pointing to $F_1 \cap F_2$. Thus $p^2$ can be taken to be this corner ray.
%\begin{figure}
%\begin{center}
%\inputfig{figureType2CasecNoF3New}
%%
%\caption{$\psi_B$ can be dominated by a new triangle}
%\end{figure}

%% FIXME: {\bf AB: I dont think I understand why this paragraph is there or what it
%% says}
%%{\em 
%Since we are not in Case b, if there exists a point $p \in (P\cap\relint(F_3))
%\setminus \Z^2$,  then there are no integer points between $p^1$ and $p$.
%Then, if we we extend the face $F_1$ and by tilting $F_3$ with fulcrum at
%$y^4$, we find a new inequality that dominates $\psi_B$.
%%}

As in Case b, if $P\cup \{f\} \subset S_1$, then $\sum_{j=1}^k\psi_B(r^j)s_j\geq 1$ is dominated or realized by the inequality derived from $S_1$. Hence, we are either in Case c\oldstylenums{1} or Case c\oldstylenums{2}. \smallskip

This concludes the proof.
\end{proof}

\section{Number of facets of the integer hull}\label{sec:poly_facets}

We recall that we have $k$ rays $r^1,\ldots ,r^k$.

\begin{remark}
\label{rem:ray_cone}
Given two rays $r^1$ and $r^2$ in $\mathbb{R}^2$, 
we denote by $C(r^1, r^2)$ the cone 
$\{\,x \in \mathbb{R}^2 \st x = f + s_1r^1 + s_2r^2, \mbox{with } 
s_1,s_2 \geq 0\,\}$. By Theorem \ref{THM:2ineqs}, we
get that $(C(r^1,r^2))_\mathrm{I}$ has a polynomial number of facets and
vertices.
\end{remark}

\begin{theorem}
\label{THM:POLYFACETS}
The number of facets of $\conv(\Rf)$ is polynomial in the size of the
encoding of the problem for $m=2$.
\end{theorem}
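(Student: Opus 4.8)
The plan is to bound the number of \emph{extreme} inequalities of $\conv(\Rf)$, as these are exactly its facets. By Theorems~\ref{thm:int_cuts} and~\ref{thm:formula} together with Lov\'asz's classification (Theorem~\ref{mlfcb2}), every such inequality is $\sum_j\psi_B(r^j)s_j\ge1$ for a matrix $B$ with $\M(B)$ a split, a Type~1/2/3 triangle, or a quadrilateral. The lemmas of Section~\ref{sec:nec_cond} are arranged hierarchically: Lemma~\ref{lem:Type1} governs Type~1 inequalities that are not realized or dominated by a Type~2 or a split inequality, Lemma~\ref{lemma:type 2 conditions} governs Type~2 inequalities not realized or dominated by a split inequality, and Lemmas~\ref{lemma:split-conditions}, \ref{lemma:type 3 conditions}, \ref{lemma:quadrilateral cond} govern splits, Type~3 triangles, and quadrilaterals unconditionally. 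Hence every extreme inequality satisfies the conclusion of one of these five lemmas (if an inequality arising from one type is ``realized by'' another type, it arises from a matrix of that other type and thus falls under its lemma), and it suffices to show that, for each of the five, only polynomially many extreme inequalities satisfy that conclusion.

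Two observations drive the count. (i)~The inequality depends on $B$ only through the numbers $\psi_B(r^j)=\max_i b^i\cdot r^j$: if $r^j$ meets only the relative interior of a facet $F_i$ then $\psi_B(r^j)$ is fixed by $\aff(F_i)$ and $r^j$; if $r^j$ is a corner ray at a vertex $v$ then $\psi_B(r^j)$ is fixed by $v$ and $r^j$; and a facet met by no ray except at a lattice point affects no $\psi_B(r^j)$ unless that lattice point is itself a ray intersection. (ii)~Since $\M(B)$ is lattice-free with $f\in\intr(\M(B))$, every boundary lattice point and every ray intersection $p^j$ is visible from $f$, i.e., the open segment to $f$ lies in $\intr(\M(B))$; so an integral ray intersection is the \emph{first} lattice point on its ray (one of $O(k)$ points), and if a facet $F$ contains ray intersections $p^a,p^b$ on rays $r^a,r^b$ then $\conv\{f,p^a,p^b\}=C(r^a,r^b)\cap H\subseteq\M(B)$ is lattice-free, where $H$ is the closed side of $\aff(F)$ containing $f$; hence $\aff(F)$ is a supporting line of $(C(r^a,r^b))_{\mathrm I}$. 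By Remark~\ref{rem:ray_cone}, over the $O(k^2)$ pairs there are only polynomially many lines carrying an edge of some $(C(r^a,r^b))_{\mathrm I}$ and polynomially many vertices of these hulls. Finally, Lemma~\ref{lemma:simple_tilts} forbids, in an extreme inequality, a ray meeting the relative interior of a facet at a non-lattice point when no ray intersection lies on a lower face of that facet; so in every genuine case each facet hit by a ray is \emph{pinned} — either it carries two ray intersections (its line being one of polynomially many), or the ray meets it only at a lattice point that is a ray intersection (the first lattice point on that ray), or the ray is a corner ray at a vertex that is a ray intersection (the first lattice point on its ray, or the intersection of a pinned line with its ray).

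Now the five bounds. If $P\subseteq\Z^2$ (Case~a of each lemma), every ray intersection is a first lattice point, so $\psi_B$ is completely determined: $O(1)$ inequalities per type, with an extra $O(k)$ for splits to fix which rays lie in the recession cone. For splits, Case~b pins the recession direction to that of one of the $k$ rays with the offset forced by $f\in\intr(\M(B))$ ($O(k)$ splits), and Case~c pins a facet line to one carrying an edge of some $(C(r^a,r^b))_{\mathrm I}$, which then determines the split ($O(k^2)\cdot\mathrm{poly}$). For Type~1, the two guaranteed ray-intersection vertices are first lattice points (choose a pair of rays: $O(k^2)$), pinning $F_3$; either the third vertex is also a ray intersection (a third ray: $O(k^3)$), or no ray meets the other two facets except possibly at their lattice midpoints, whence $\psi_B$ is determined by the two pinned vertices together with the at most two rays through those midpoints ($O(k^4)$ total). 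For Type~3 triangles and quadrilaterals, all $n\in\{3,4\}$ vertices are corner ray intersections ($O(k^n)$ choices of corner rays), and for each edge between consecutive corner rays $r^a,r^b$ the unique lattice point on it is — since $\conv\{f,p^a,p^b\}\subseteq\M(B)$ is lattice-free and contains exactly that lattice point — a vertex of $(C(r^a,r^b))_{\mathrm I}$ (one of polynomially many); the corner rays and these lattice points then determine $\M(B)$ (uniquely, by the last sentence of Lemma~\ref{lemma:quadrilateral cond}, for quadrilaterals; up to finitely many possibilities via a one-parameter algebraic condition for Type~3 triangles). Each of the five contributes polynomially many inequalities, so their sum is polynomial.

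The main obstacle is the Type~2 case (Lemma~\ref{lemma:type 2 conditions}), which is the least rigid: there $\Y$ need not cover $Y(B)$, so the perturbations rely on the auxiliary constructions of Lemmas~\ref{lemma:add_edge} and~\ref{lemma:add_edge2}, and the lemma only furnishes — possibly after replacing $B$ by another matrix $B'$ with $\M(B')$ a Type~2 triangle realizing the same inequality — one pinned vertex, plus a facet ($F_3$, $F_1$, or $F_2$) carrying at least two lattice points (so its line is one of polynomially many), plus sub-cases governed by the position of $f$ relative to the splits $S_i$ and by corner rays forced through Lemma~\ref{lemma:simple_tilts}. One must verify that counting the inequalities $\sum_j\psi_{B'}(r^j)s_j\ge1$ suffices, assemble all sub-cases of Lemma~\ref{lemma:type 2 conditions} into a single polynomial bound, and — across all five cases — check that the ``realized or dominated by'' clauses route every extreme inequality into exactly one type-bound, so the hierarchy is sound and nothing is missed. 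The remaining steps are routine bookkeeping on top of observations (i), (ii) and Remark~\ref{rem:ray_cone}.
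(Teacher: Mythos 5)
Your overall strategy is the same as the paper's: filter each of the five types of maximal lattice-free sets through the necessary conditions of Section~\ref{sec:nec_cond} and count, via Remark~\ref{rem:ray_cone}, polynomially many candidates per type. But the general ``pinning'' principle your counts rest on has a genuine gap, and it surfaces exactly in the cases you treat most briefly. Your trichotomy asserts that a facet met by a ray at a non-lattice point either carries two ray intersections (whence its line is one of polynomially many, being a supporting line of some $(C(r^a,r^b))_{\mathrm I}$) or is excluded by Lemma~\ref{lemma:simple_tilts}. Two problems: a supporting line of $(C(r^a,r^b))_{\mathrm I}$ is pinned to polynomially many candidates only when it carries an \emph{edge} of that hull, i.e., when at least two lattice points lie on it between the intersections --- through a single vertex there are infinitely many supporting lines; and Lemma~\ref{lemma:simple_tilts} requires $P\cap F\subset\relint(F)$, so it says nothing once a corner ray sits at a vertex of~$F$. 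Concretely, for a Type~1 triangle with corner rays at the two integral vertices $p^1,p^2$ of $F_3$, a further ray may hit $\relint(F_1)$ at a non-lattice point while the apex is not a ray intersection; this is not excluded by Lemma~\ref{lem:Type1}, it falls outside your dichotomy ``third vertex is a ray intersection, or no ray meets $F_1,F_2$ off the lattice midpoints,'' and since $F_1$ contains only one lattice point neither $\aff(F_1)$ nor $y^1$ is pinned by your observations (i), (ii). The paper supplies precisely the missing idea: $y^1,y^2$ are determined by the position of $f$ relative to the split $S_3$ (Case~a) or by a ray intersection outside $S_3$ (Case~b), and then $F_3$, $y^1$, $y^2$ and the two corner rays determine the triangle.

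The same difficulty is what makes Type~2 hard, and it is not ``routine bookkeeping'': there the paper needs the full sub-case machinery of Lemma~\ref{lemma:type 2 conditions} --- passing to $B'$ with a corner ray on $F_3$, choosing a ray $r^3$ pointing between $y^1$ and $y^2$ when $f\in S_3$ (Case~b\oldstylenums{2}), identifying $F_1$ in Case~c as the unique facet of $(C(r^1,r^2))_{\mathrm I}$ adjacent to the integral vertex $p^1$, and, in the one-corner-ray subcases, arguing the opposite facet has no rays pointing to it so any such facet may be chosen --- none of which follows from your two observations. Finally, your Type~3 count asserts ``finitely many possibilities via a one-parameter algebraic condition,'' but the determining system is linear, so the alternative is a unique solution versus a positive-dimensional family; the paper must and does prove uniqueness (Proposition~\ref{prop:unique-triangle}). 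In short, the skeleton matches the paper, but the deferred steps are where the proof's real content lies, and as written the Type~1 and Type~2 bounds are not established.
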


\begin{proof}

We will follow the cases from section \ref{sec:nec_cond} for each type of maximal lattice-free convex set in $\R^2$.\smallbreak

We will first handle the case where $P \subset \Z^2$.  That is, let $P$ be the
set of closest integer points that the rays point to from $f$.  If $\conv(P)$
is a lattice-free set, then it is contained within a maximal lattice-free set.
Choose any particular maximal lattice-free set containing $P$.  This covers
Case a for Type 2 and 3 triangles, quadrilaterals, and splits.  We will no
longer refer to this Case a for these types of lattice-free sets.\medbreak

\noindent\textbf{Splits.} The necessary conditions are given in
Lemma~\ref{lemma:split-conditions}. We consider the two remaining cases, which
are illustrated in Figure~\ref{fig:count-splits}.\smallskip  
\begin{figure}
\centering

\scalebox{0.85}{%
\ifpdf
\input{figures/figureSplitFacetParallel.pdftex_t}% \input{#1.pdf_t}
\else
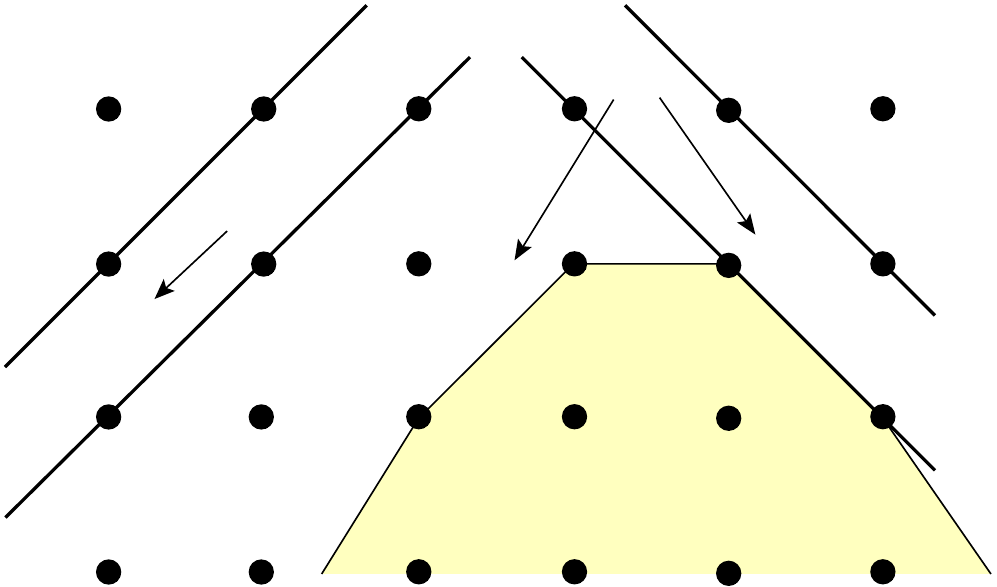
\fi}
\caption{Counting a polynomial number of splits}\label{fig:count-splits}
\end{figure}

\indent \textit{Case b.}  A ray
direction~$r^j$ is parallel to the split.  There are at most $k$ such
ray directions, and thus at most~$k$ splits in this case.\smallbreak

\indent \textit{Case c.} There exist $p^1, p^2$  such that $[p^1, p^2] \cap \Z^2 \geq 2$, and therefore, the split must run parallel to a facet of $(C(r^1,r^2))_\mathrm{I}$, of which there are only polynomially many. There are only ${ k \choose 2}$ ways to choose two rays for this possibility.\medbreak

\begin{figure}
\centering

\scalebox{0.85}{%
\ifpdf
\input{figures/figureType1Choices.pdftex_t}% \input{#1.pdf_t}
\else
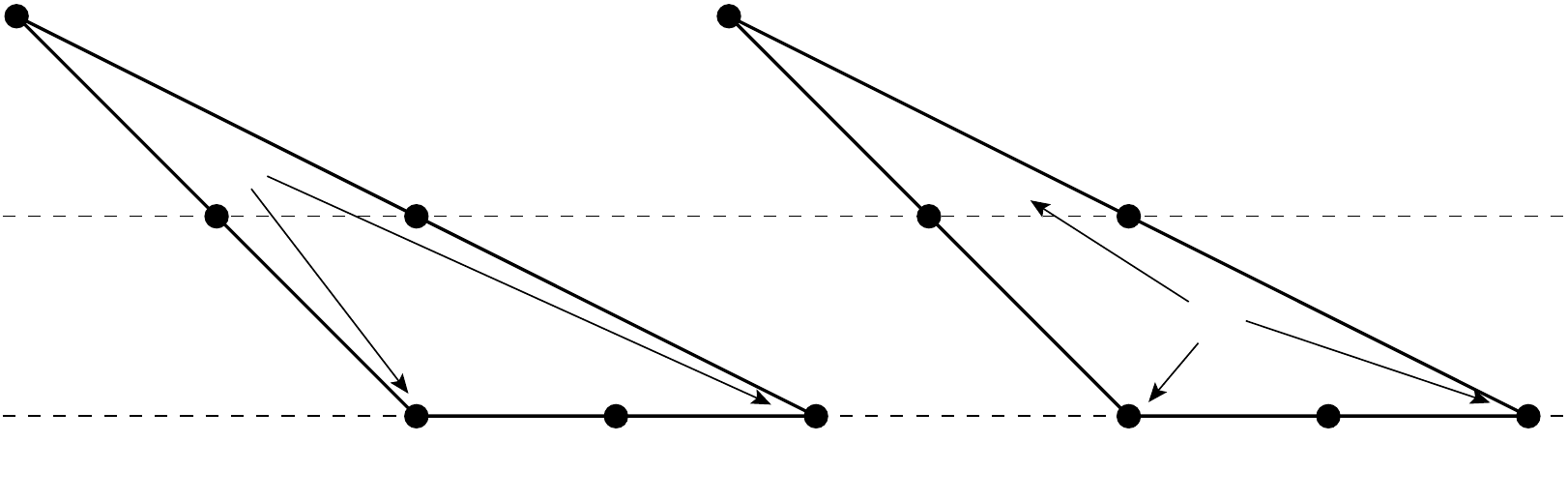
\fi}
\caption{Counting a polynomial number of Type 1 triangles}\label{fig:count-type1}
\end{figure}

\noindent\textbf{Type 1 triangles.} 
We assume that the inequality cannot be realized or dominated by a Type 2
triangle or split, because in this case we will use the analysis for these two
types. We now apply Lemma~\ref{lem:Type1} and refer to Figure~\ref{fig:count-type1}.

There are two corner rays, call them $r^1, r^2$; there are ${k \choose 2}$
ways to choose them.  Since these rays both point directly to integer points, they uniquely define $F_3$.\smallbreak

\indent\textit{Case a.} Since $f$ does not lie in the split~$S_3$, the integer
points $y^1, y^2$ are uniquely determined.\smallbreak

\indent\textit{Case b.} Since $f$ lies in the split~$S_3$ and there exists a
ray intersection~$p^3$ outside the split, the integer points $y^1, y^2$ are uniquely determined.\smallbreak

In both cases, since $F_3, y^1, y^2$, and the corner rays $r^1, r^2$ uniquely determine the
triangle, there are only polynomially many Type 1 triangles that we must
consider. 
\medbreak

\begin{figure}
\centering

\scalebox{0.85}{%
\ifpdf
\input{figures/figureType2ChoicesNew.pdftex_t}% \input{#1.pdf_t}
\else
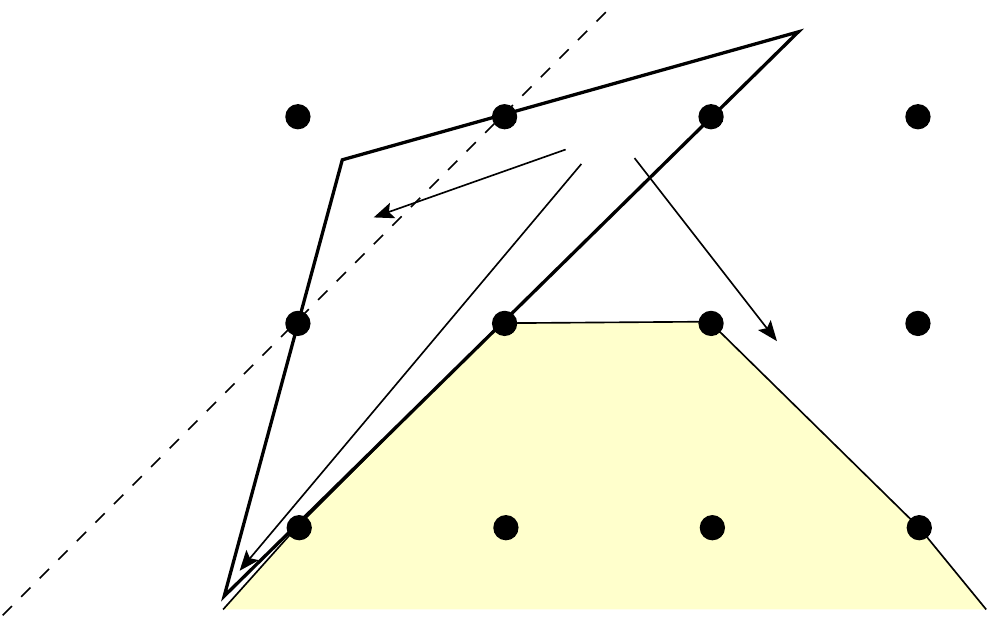
\fi}
\caption{Counting a polynomial number of Type 2 triangles in Case b}
\label{fig:Type2ChoicesNew}
\end{figure}

\noindent\textbf{Type 2 triangles.}  The necessary conditions are given in
Lemma~\ref{lemma:type 2 conditions}. \smallskip

\indent\textit{Case b}. 
We first pick the two rays $r^1, r^2$ to be the rays that are closest to
$F_1\cap F_3$ and $F_2 \cap F_3$, respectively. This can be done in $ {k
  \choose 2}$ ways.  See Figure~\ref{fig:Type2ChoicesNew}.

We next pick the facet $F_3$ as a facet of $(C(r^1,r^2))_\mathrm{I}$, which can be done only polynomially many ways.   

Now we choose $y^1, y^2$.  In Case b\oldstylenums{1}, where $f \not\in S_3$,
they are given uniquely by where $f$ is.  In Case b\oldstylenums{2}, when $P
\not\subset S_3$, we first pick a ray $r^3$ such that the corresponding ray
intersection $p^3$ will be the one that is not contained in $S_3$, and so
$r^3$ points between $y^1$~and~$y^2$. This would imply that $y^i$ is one of the vertices of $(C(r^i, r^3))_\mathrm{I}$. Moreover, since $y^1, y^2$ have to lie on the lattice plane adjacent to $F_3$, we have a unique choice for $y^1, y^2$ once we choose $r^3$. Now $r^3$ can be chosen in $O(k)$ ways and so there are $O(k)$ ways to pick $y^1, y^2$.

If we choose there to be a second corner ray somewhere (we can do this in $O(k)$ ways), then the triangle is uniquely determined by the two corner rays, $F_3$, $y^1$, and $y^2$.

On the other hand, if we choose that there is only one corner ray, then we pick $r^1$ or $r^2$ to be the only corner ray (2 choices), and the facet opposite of this corner ray cannot have any rays pointing to it that do not point to an integer point.  This is because that facet has no corner rays.  Therefore, any particular choice of this facet with no rays pointing to it will suffice (although one may not exist).

Hence, there are only polynomially many possibilities for Case b.\smallbreak

\begin{figure}
\centering

\scalebox{0.85}{%
\ifpdf
\input{figures/figureType2ChoicesC.pdftex_t}% \input{#1.pdf_t}
\else
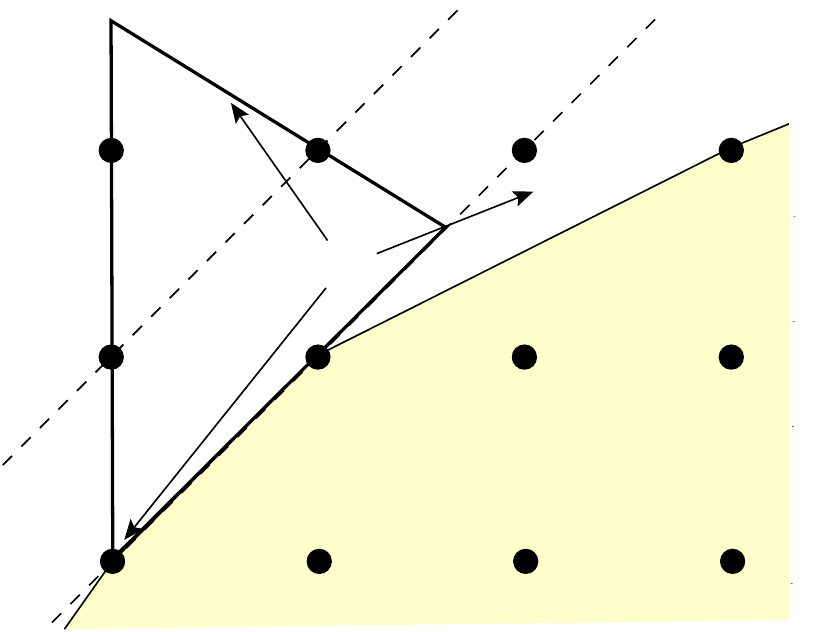
\fi}
\caption{Counting a polynomial number of Type 2 triangles in Case c}
\label{fig:Type2ChoicesC}
\end{figure}
\indent \textit{Case c}.
We first choose $r^1, r^2$ to be the two rays such that $\#([p^1,
p^2]\cap\Z^2) \geq 2$.  One of them must point to an integer point on the
facet $F_3$. There are $2 \times {k \choose 2}$ ways to choose this.  Without
loss of generality, let $r^1$ point to the integer point on $F_3$.  See
Figure~\ref{fig:Type2ChoicesC}.

We next choose the facet $F_1$ from $(C(r^1,r^2))_\mathrm{I}$. There is a unique choice for $F_1$ because $p^3$ is an integer point and so $p^3$ will be {\em the} vertex of $(C(r^1,r^2))_\mathrm{I}$ (if one exists) that lies on the facet of $C(r^1, r^2)$ defined by the ray $r^1$. Hence $F_1$ can be the unique facet that is adjacent to this vertex but not lying on the facet of $C(r^1, r^2)$ defined by the ray $r^1$.

Now we pick $y^2, y^4$. This analysis is the same as with Cases b\oldstylenums{1} and b\oldstylenums{2}. In Case c\oldstylenums{1}, these points are uniquely determined by $f$.  In Case c\oldstylenums{2}, these are uniquely determined by one of the rays pointing between them. Thus, $y^2, y^4$ can be chosen in $O(k)$ ways after choosing this ray.

If we assume there are two corner rays ($r^1$ and $r^2$), then the triangle is uniquely determined by these corner rays, $F_1$, $y^2$, and $y^4$.

On the other hand, if we assume that $r^1$ is the only corner ray, then there
cannot be any rays pointing to the interior of the opposite facet $F_2$.
Therefore, this facet can be chosen to be any particular facet (if one exists) that does not
have rays pointing to it. Then the triangle is uniquely determined by $r^1, F_1, F_2, y^2$, and $y^4$.

Therefore, there are only polynomially many Type 2 triangles of Case c, and hence there are only polynomially many Type 2 triangles that we need to consider.\medbreak

\noindent\textbf{Type 3 triangles.} The necessary conditions are given in
Lemma~\ref{lemma:type 3 conditions}.\smallskip

\indent\textit{Case b.} We only need to consider Case b, where there are three corner rays.  Now we pick any triplet of rays, say $r^1,r^2,r^3$, and require that each side of
$\M(B)$ passes through a vertex of $(C(r^i,r^{i+1}))_\mathrm{I}$, $i = 1,2,3$ and
$r^4 = r^1$. There are only polynomially such triplets of integer vertices
$y^1, y^2, y^3$ to choose.  

We note that a triangle whose 3 corner rays and a point on the
relative interior of each facet are known is already uniquely determined.  In
the appendix, we prove this claim (Proposition~\ref{prop:unique-triangle}). 
%
%If the triangle satisfies the ray condition, we know from a result
%in Basu et al~\cite{bbcm}, that the only possible rays are ones
%which point towards the lattice points on the three sides. Then any
%triangle passing through these defines the same facet.
%
%So the number of facets derived from Type 3 triangles are polynomial
%in number.
Thus, we can use a triplet of rays and a vertex from
each integral hull of the three cones spanned by consecutive rays to
define the triangle. These are polynomial in number.\medbreak

\noindent\textbf{Quadrilaterals.} The necessary conditions are given in
Lemma~\ref{lemma:quadrilateral cond}.\smallskip

\indent\textit{Case b.}  
We first pick four rays $r^1, r^2, r^3, r^4$ to be corner rays, which can be done in ${ k \choose 4 }$ ways.  
We next pick four integer points $y^1, y^2, y^3, y^4$, with $y^i$ a vertex of $(C(r^i, r^{i+1}))_{\mathrm{I}}$, with $i=1,2,3$ and $y^4$ a vertex of $(C(r^4, r^1))_{\mathrm{I}}$.  This can be done in polynomially many ways.

%For any such $\M(B)$ having those corner rays and those integer points, we
%compute the dimension of $\T(B)$ (note this can be done without knowing any
%$B$ explicitly since the space $\T(B)$ is only dependent on which rays are
%corner rays and which are the integer points). 

Lemma \ref{lemma:quadrilateral cond} Case b says that if $\sum_{j=1}^k\psi_B(r^j)s_j\geq 1$ is extreme, then it is the unique quadrilateral with these corner rays and integer points.  Therefore, we count at most one quadrilateral for each set of corner rays and integer points.  
%We include a quadrailateral with these corner rays and integer points only if there is a If $\dim \T(B) = 0$, then there is one unique quadrilateral with these corner rays and integer points.
%On the other hand, if $\dim \T(B) \geq 1$, then the quadrilateral is not
%extreme and need not be considered. 
\smallbreak

Therefore, there are only polynomially many quadrilaterals that must be
considered.\medbreak

We have enumerated all the types of maximal lattice-free convex sets in $\R^2$ and shown that there are only polynomially many sets of each type that must be considered. Hence, for the case of $m=2$, we have shown that $\Rf$ has only polynomially many facets.
\end{proof}

We obtain the following result as a direct consequence of our proof for Theorem~\ref{THM:POLYFACETS}.
\begin{theorem}\label{thm:enumerate}
There exists a polynomial time algorithm to enumerate all the facets of $\conv(\Rf)$ when $m=2$.
\end{theorem}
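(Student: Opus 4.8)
The plan is to turn the case analysis underlying Theorem~\ref{THM:POLYFACETS} into an explicit enumeration, and then to sieve out the genuine facets by one linear program per candidate. Recall that, for each of the five types of maximal lattice-free convex set in $\R^2$, the proof of Theorem~\ref{THM:POLYFACETS} produces a polynomial-size family of combinatorial data — a choice of a few rays designated as corner rays, a choice of a facet among the facets of some integer hull $(C(r^i,r^j))_{\mathrm{I}}$, and a choice of integer points, each drawn from the vertex set of such an integer hull or from some $\ext(X(I))$ — and shows that whenever this data arises from an extreme inequality, it pins down the lattice-free set $\M(B)$, hence the matrix $B$ and the inequality $\sum_{j=1}^k\psi_B(r^j)s_j\geq 1$, uniquely.

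First, as a precomputation, I would compute, for every pair, triple, and quadruple of rays occurring in the case analysis, the integer hull $(C(r^i,r^j))_{\mathrm{I}}$ together with its vertices and facets, as well as $\ext(X(I))$ for every $I\in\mathcal{I}$; by Theorem~\ref{THM:2ineqs} and Hartmann's vertex-enumeration algorithm~\cite{hartmann-1989-thesis}, which runs in polynomial time in fixed dimension, all of these objects have polynomial size and are obtained in polynomial time for $m=2$. Next, I would iterate over all candidate data tuples (polynomially many) and, for each one, reconstruct the matrix $B$ by solving the linear system that forces every facet to pass through its prescribed integer point and to contain its prescribed corner ray — this is precisely the equality part of the defining system of the corresponding tilting space, which in the extreme case has a unique solution (the relevant uniqueness facts being Proposition~\ref{prop:unique-triangle} for Type~3 triangles and the uniqueness clause of Lemma~\ref{lemma:quadrilateral cond} for quadrilaterals). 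In the handful of branches where the proof only asks for ``a facet to whose relative interior no ray points, if one exists,'' I would pick any such facet (each choice yields the same inequality) or discard the tuple if none exists. From $B$ one reads off $\gamma_j=\psi_B(r^j)=\max_i b^i\cdot r^j$ in polynomial time.

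Then I would discard every candidate whose $\M(B)$ fails to be lattice-free, equivalently test whether $\intr(M_\gamma)\cap\Z^2=\emptyset$ for the set $M_\gamma$ of~\eqref{eq:B}; this is one integer-feasibility query in fixed dimension, hence polynomial, exactly as already used in Section~\ref{sec:alg}. The surviving list $L$ has polynomial size and, by the proof of Theorem~\ref{THM:POLYFACETS}, contains every extreme inequality of $\conv(\Rf)$. Since $L\subseteq\Blocking{\conv(\Rf)}$, since $\Blocking{\conv(\Rf)}$ is a blocking polyhedron with recession cone $\R^k_+$, and since $L$ already contains all the vertices of $\Blocking{\conv(\Rf)}$, we have $\Blocking{\conv(\Rf)}=\conv(L)+\R^k_+$; therefore a point $\gamma\in L$ is an extreme inequality — equivalently, defines a facet of $\conv(\Rf)$ — if and only if $\gamma\notin\conv(L\setminus\{\gamma\})+\R^k_+$, which is decided by a linear program with polynomially many variables and constraints. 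The algorithm outputs exactly those $\gamma\in L$ for which this program is infeasible (after removing duplicates); by Theorem~\ref{THM:POLYFACETS} the output is of polynomial size, and every step above is polynomial, so the whole procedure runs in polynomial time.

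The main obstacle is the reconstruction step: one must verify that every branch of the case analysis of Theorem~\ref{THM:POLYFACETS} genuinely determines $B$ from its finitely many discrete choices through an explicitly solvable linear system — in particular that the uniqueness assertions invoked there are algorithmic, and that the under-determined ``facet with no ray pointing to it'' choices can be replaced by a single witness without losing any inequality. The remaining ingredients — Hartmann's vertex enumeration in fixed dimension, integer-feasibility testing in fixed dimension, and linear programming — are all standard, so once the reconstruction is in place the polynomiality is routine.
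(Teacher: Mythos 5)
Your proposal is correct and follows essentially the same route as the paper: use the constructive case analysis behind Theorem~\ref{THM:POLYFACETS} to generate, in polynomial time, a polynomial-size superset of candidate inequalities, and then filter out the non-extreme ones by linear programming. Your write-up merely makes explicit what the paper leaves as ``standard LP techniques'' (the test $\gamma\notin\conv(L\setminus\{\gamma\})+\R^k_+$, justified since $L$ contains all vertices of the pointed polyhedron $\Blocking{\conv(\Rf)}$) and spells out the reconstruction of $B$ from the discrete choices, both of which are consistent with the paper's argument.
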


\begin{proof}
For each of the five types of maximal lattice-free sets in the plane, the proof for Theorem~\ref{THM:POLYFACETS} shows how to generate in polynomial time the ones that are potentially facet defining. However, since we only ensure that the necessary conditions from Section~\ref{sec:nec_cond} are not violated, we can potentially generate a set of valid inequalities (of polynomial size) which is a superset of all the facets. We can then use standard LP techniques to select the facet defining ones from these.
\end{proof}

\clearpage
\appendix
{\small
\section{Appendix: Uniqueness of a triangle defined by 3 corner rays and a point on the
  relative interior of each facet}

%{\bf FM: Below, the vertices are not on the ray $r$, 
%but on $f + \lambda r$. This must be fixed throughout the paper.
%
%``Distance cut of on ray'' is not defined.}
%
%\begin{claim}
%Given any three rays $r^1, r^2, r^3$ and a vertex $p_i$ of
%$C_I(r^i,r^{i+1})$ ($i = 1,2,3$ and $r^4 = r^1$), there exists a
%unique triangle with sides passing through $y^i$ and vertices on
%$r_i$, $i = 1,2,3$.
%\end{claim}
%
%

%
%\begin{claim}
%Given any three rays $r^1, r^2, r^3$ and three integer points $y^1, y^2, y^3$, there exists a
%unique triangle with sides passing through $y^i$ and vertices on
%$r^i$, $i = 1,2,3$.
%\end{claim}

%\begin{center}
%\inputfig{figureType3Uniqueness}
%%
%\begin{proof}
%Consider two triangles $T_1$ and $T_2$ satisfying the conditions of
%the claim. Denote the distances cut off on a ray $r$ in $T_i$ by $d_{T_i}(r) := \frac{1}{\psi_{T_i}(r / ||r||)}$. Without
%loss of generality, say $d_{T_1}(r^1) > d_{T_2}(r^1)$. This implies
%$d_{T_1}(r^2) < d_{T_2}(r^2)$. This in turn implies $d_{T_1}(r^3) >
%d_{T_2}(r^3)$. So $d_{T_1}(r^1) > d_{T_2}(r^1)$ and $d_{T_1}(r^3) >
%d_{T_2}(r^3)$. But then the side connecting $r^1$ and $r^3$ in $T_2$
%cannot pass through $y_3$.
%
%So there is a unique triangle satisfying the conditions of the
%claim.
%\end{proof}

%{\bf FM: define ``consecutive'', usually this is done using
%``topological ordering''}

%\textbf{Alternative Proof of Uniqueness (coefficients are slightly different here)}
\begin{proposition}\label{prop:unique-triangle}
Any triangle defined by 3 corner rays and 3 points (one on the relative interior of each facet) is uniquely defined. %(This argument might be able to be extended to higher dimensional simplices).
\end{proposition}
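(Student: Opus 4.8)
The plan is to parametrize the triangle by how far along each corner ray its vertex lies, turning the three incidence conditions into a $3\times 3$ linear system whose coefficient matrix is visibly nonsingular.

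First I would fix notation: after relabeling, let $r^1,r^2,r^3$ be the corner rays in cyclic order around $f$, so the vertices are $v^i=f+t_i r^i$ for unknown scalars $t_i>0$, and let $q^k$ be the prescribed point in the relative interior of the facet $[v^i,v^j]$, where $\{i,j,k\}=\{1,2,3\}$. (The assignment of each point to a facet is part of the data; in the application it comes from the cones $C(r^i,r^{i+1})$, and in general it is forced, since a point on $\relint[v^i,v^j]$ satisfies $q^k-f\in\cone(r^i,r^j)$ and these three cones overlap only along the rays.) I would then record the preliminary fact that for $i\neq j$ the rays $r^i$ and $r^j$ are linearly independent: otherwise $f$ would lie on the line through $v^i$ and $v^j$, contradicting that $f$ is interior to the triangle. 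Consequently each $q^k-f$ has a unique expansion $q^k-f=a_{ki}\,r^i+a_{kj}\,r^j$, with $a_{ki},a_{kj}$ determined by the input data alone.

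Next I would exploit that $q^k\in\relint[v^i,v^j]$ means $q^k=\lambda v^i+(1-\lambda)v^j$ for some $\lambda\in(0,1)$, i.e.\ $q^k-f=\lambda t_i\,r^i+(1-\lambda)t_j\,r^j$; comparing with the expansion above gives $a_{ki}=\lambda t_i>0$ and $a_{kj}=(1-\lambda)t_j>0$, and eliminating $\lambda$ yields $a_{ki}/t_i+a_{kj}/t_j=1$. Substituting $u_i:=1/t_i$, the three incidence conditions become a linear system $Mu=(1,1,1)^\top$, where every row of $M\in\R^{3\times 3}$ has exactly two nonzero entries, namely the two relevant coefficients $a_{ki},a_{kj}$, all strictly positive. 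A single cofactor expansion then shows that $\det M$ equals a sum of two products of three of these positive numbers, hence $\det M>0$; therefore $M$ is invertible, $u$ is unique, and so are $(t_1,t_2,t_3)$ and the triangle.

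I do not expect a real obstacle here; the only two places requiring a little care are the linear independence of consecutive corner rays (the step where interiority of $f$ is used) and the positivity of the coefficients $a_{ki}$ (which is precisely what the relative-interior hypothesis guarantees, via the existence of a realizing triangle, and precisely what makes $\det M\neq 0$).
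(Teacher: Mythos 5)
Your proof is correct, and it takes a genuinely different route from the paper's. You parametrize the triangle primally, by the distances $t_i$ of the vertices along the corner rays, and turn each incidence condition $q^k\in\relint[v^i,v^j]$ into the linear equation $a_{ki}/t_i+a_{kj}/t_j=1$ in the reciprocals $u_i=1/t_i$; the resulting $3\times 3$ matrix has two strictly positive entries per row in a cyclic pattern, so its determinant is a sum of two positive triple products and uniqueness is immediate. The paper instead works dually, in the facet-normal coordinates of its tilting space: the unknowns are the three rows $a^1,a^2,a^3\in\R^2$ of the matrix describing the triangle, the corner-ray and point conditions give a $6\times 6$ system, and nonsingularity is shown by writing each interior point as a convex combination of the corner-ray intersections, row-reducing to an upper block triangular form, and checking that each $2\times 2$ block has nonzero determinant. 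Your argument is smaller and more elementary (a $3\times 3$ determinant whose positivity is visible at a glance, with the only care points being linear independence of pairs of corner rays and strict positivity of the coefficients $a_{ki}$, both of which you justify correctly from $f$ being interior and the points lying in relative interiors); the paper's computation buys consistency with the formalism used throughout, since it literally shows $\dim \T(B,\Y)=0$ in the same coordinates in which the tilting-space lemmas of Sections~4 and~5 are stated. Your parenthetical remark that the assignment of points to facets is forced (because each $q^k-f$ lies in the interior of exactly one of the cones spanned by consecutive corner rays, these cones having disjoint interiors once the rays positively span the plane) is a point the paper's proof does not need to address explicitly, and you handle it adequately.
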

\begin{proof}
The space of these three corner rays and 3 points is exactly the tilting space
of any such triangle satisfying this.  For convenience we define $\y^i:= y^i - f$
and $\p^i := p^i - f$, where $p^i
% = f + \frac{1}{\psi_B(r^i)} r^i
$ are the ray intersections.  Then $\p^i = 
\frac{1}{\psi_B(r^i)} r^i$. 

We want to show that the solution to the following systems of equations is unique.
\begin{equation*}
	\begin{array}{ccc}
		\begin{array}{rl}
		 	a^1\cdot \y^1  &= 1            \\			
			 a^1\cdot \p^2  &=a^2\cdot  \p^2   \\
			a^2\cdot  \y^2  &= 1            \\
			a^2\cdot  \p^3  &= a^3\cdot  \p^3  \\
			a^3\cdot  \y^3  &= 1            \\
			a^3\cdot  \p^1  &=a^1\cdot  \p^1 
		\end{array}
	& \Rightarrow &
	\begin{bmatrix}
		\y^1                         \\
		 \p^2 & -\p^2              \\
			 & \y^2               \\
			 & \p^3 & - \p^3    \\
			 &       & \y^3      \\
		-\p^1 &       & \p^1
	\end{bmatrix}
	\begin{bmatrix}
		 a^1 \\ a^2 \\ a^3
	\end{bmatrix}
	= 
	\begin{bmatrix}
		1\\0\\1\\0\\1\\0
	\end{bmatrix}
\end{array}
\end{equation*}
We then write this down as a matrix equation where every vector in the matrix is a row vector of size 2, therefore we have a $6 \times 6$ matrix.  We will analyze the determinant of the matrix.

Since the points $\y^1, \y^2, \y^3$ are on the interior of each facet, they can be written as convex combinations of $\p^1, \p^2, \p^3$.

\begin{equation*}
\begin{array}{ccc}
\y^1 = \frac{1}{\alpha'} \p^1 + \frac{\alpha}{\alpha'} \p^2 &  & \p^1 = \alpha' \y^1 - \alpha \p^2\\
\y^2  =  \frac{1}{\beta'} \p^2 + \frac{\beta}{\beta'}\p^3&  \Rightarrow & \p^2 = \beta' \y^2 - \beta \p^3\\
\y^3 =  \frac{1}{\gamma'} \p^3 +\frac{\gamma}{\gamma'} \p^1&  & \p^3 = \gamma' \y^3 - \gamma \p^1
\end{array}
\end{equation*}

Therefore, we can perform row reduction on the last row.  Just tracking the last row, we have
$$
\begin{bmatrix}
-\p^1 & 0 & \p^1
\end{bmatrix}\rightarrow
\begin{bmatrix}
0 & \alpha \p^2 & \p^1
\end{bmatrix}
\rightarrow
\begin{bmatrix}
0& 0& \p^1 - \alpha \beta \p^3
\end{bmatrix}.
$$

This matrix now has an upper block triangular form, and the determinant is easily computed as 
$$
\det (\y^1; \p^2) \det(\y^2; \p^3) \det(\y^3; \p^1 - \alpha \beta \p^3).
$$
The first two determinants are non-zero because those vectors are linearly independent.  The last determinant requires some work:
$$
\begin{bmatrix}
\y^3\\
 \p^1 + \alpha \beta \p^3
\end{bmatrix} =
 \begin{bmatrix}
\frac{1}{\gamma'} \p^3 +\frac{\gamma}{\gamma'} \p^1\\
 \p^1 - \alpha \beta \p^3
\end{bmatrix}
= 
\begin{bmatrix}
\frac{\gamma}{\gamma'} & \frac{1}{\gamma'} \\
1 & -\alpha \beta
\end{bmatrix}
\begin{bmatrix}
\p^1\\
\p^3
\end{bmatrix}.
$$
Since all the coefficients are positive, the determinant of the first matrix is strictly negative, and since $\p^1, \p^3$ are linearly independent, the determinant of the second matrix is non-zero.\smallbreak

Hence, the determinant of the original matrix is non-zero, and therefore the system of equations has a unique solution.
\end{proof}
}

\clearpage
\paragraph*{Acknowledgments.}
A.~Basu wishes to thank G{\'e}rard Cornu{\'e}jols and Fran\c{c}ois Margot
for many discussions which led to an earlier
manuscript~\cite{bcm:strongest-2row-unpublished}, which the present article is
based on.
During the completion of this work, R.~Hildebrand was supported by grant
DMS-0636297 (VIGRE), and R.~Hildebrand and M.~K\"oppe were supported by grant
DMS-0914873 of the National Science Foundation.

\bibliography{../bib/MLFCB_bib}{}
\bibliographystyle{../amsabbrv}

\end{document}